\newtheorem{thm}{Theorem}[section]
\newtheorem*{thm12}{Theorem \ref{thm:12}}
\newtheorem*{thminfinity}{Theorem \ref{thm:infinity}}
\newtheorem*{thmq}{Theorem \ref{thm:thmq}}
\newtheorem*{thmqnoth}{Theorem \ref{thm:qnoth}}
\newtheorem{cor}[thm]{Corollary}
\newtheorem*{infty2}{Corollary \ref{cor:infty2}}
\newtheorem{lem}[thm]{Lemma}
\newtheorem{qu}[thm]{Question}
\newtheorem{prop}[thm]{Proposition}
\begin{document}

\title{Horowitz--Randol pairs of curves in $q$-differential metrics}
\author{Anja Bankovic}
\date{}
\maketitle

\vspace{0.5cm}

\textbf{Abstract}: The Euclidean cone metrics coming from $q$--differentials on a closed surface of genus $g \geq 2$ define an equivalence relation on homotopy classes of closed curves declaring two to be equivalent if they have the equal length in every such metric.  We prove an analog of the result of Randol for hyperbolic metrics (building on the work of Horowitz): for every integer $q \geq 1$, the corresponding equivalence relation has arbitrarily large equivalence classes.  In addition, we describe how these equivalence relations are related to each other.

\section{Introduction}
\label{sec:intro}

The work of Horowitz \cite{horowitz} and Randol \cite{randol} provides examples of the following: for every $n >0$ there exist $n$ distinct homotopy classes of curves $\gamma_1,\ldots,\gamma_n$ on a compact oriented surface $S$ such that for every hyperbolic metric $m$, $l_m(\gamma_i)=l_m(\gamma_j)$, for all $i,j$, where $l_m(\gamma)$ represents the length of the geodesic representative of $\gamma$ in metric $m$. In \cite{clein1} Leininger studies this and related phenomenon for curves on $S$. He also asks whether other families of metrics exhibit similar behaviour.

\begin{qu} \label{qu:clein} \cite{clein1} Do there exist pairs of distinct homotopy classes of curves $\gamma$ and $\gamma'$ which have the same length with respect to every metric in a given family of path metrics. \end{qu}

For an arbitrary family of metrics one expects the answer to be no. Here we study this question for the metrics coming from $q$-differentials on a closed oriented surface $S$, for all $q \geq 1$. More precisely, let $Flat(S)$ denote the set of non-positively curved Euclidean cone metrics on $S$ and $Flat(S,q)$ those that come from $q$-differentials. (See Section~\ref{sec:Background} for more details.) Let $\mathcal{C}(S)$ denote the set of homotopy classes of homotopically nontrivial closed curves on $S$. For every $q\in \mathbb{Z}_+$, define an equivalence relation on $\mathcal{C}(S)$ by declaring $\gamma \equiv_{q} \gamma'$ if and only if $l_m(\gamma)= l_m(\gamma')$, for every $m \in Flat(S,q)$.\\

In \cite{clein1} Leininger answers Question~\ref{qu:clein} in the affirmative for metrics in $Flat(S,2)$. In fact, writing $\gamma \equiv_{h} \gamma '$  if and only if  $l_m(\gamma)= l_m(\gamma')$,  for every hyperbolic metric $m$, he proves:

\begin{thm} \label{thm:clein2} \cite{clein1} For every $\gamma$, $\gamma' \in \mathcal{C}(S)$, $\gamma \equiv_{h} \gamma '$ $\Rightarrow$ $\gamma \equiv_{2} \gamma '$. \end{thm}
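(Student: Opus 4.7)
\textbf{Proof proposal for Theorem \ref{thm:clein2}.}

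The plan is to pass through Bonahon's space of geodesic currents $\mathrm{Curr}(S)$, in which Teichm\"uller space, $ML(S)$, homotopy classes of closed curves, and flat metrics from quadratic differentials all embed, and in which the geometric intersection number extends to a continuous symmetric bilinear pairing $i(\cdot,\cdot)$ satisfying $l_m(\alpha)=i(m,\alpha)$ for every metric $m$ in the above families and every $\alpha\in\mathcal{C}(S)$. To conclude $\gamma\equiv_2\gamma'$ it then suffices to show $i(L_q,\gamma)=i(L_q,\gamma')$ for every Liouville current $L_q$ associated to a flat metric $q\in Flat(S,2)$.

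The first step is to upgrade the hypothesis $\gamma\equiv_h\gamma'$ to the statement that $i(\lambda,\gamma)=i(\lambda,\gamma')$ for every $\lambda\in ML(S)$. By Thurston's compactification, any $\lambda\in ML(S)$ can be written as $\lambda=\lim_{n\to\infty} t_n\, m_n$ in $\mathrm{Curr}(S)$ for some hyperbolic metrics $m_n$ and positive scalars $t_n$. Continuity and bilinearity of $i$, together with the hypothesis, give
\[
i(\lambda,\gamma)=\lim_{n\to\infty} t_n\, l_{m_n}(\gamma)=\lim_{n\to\infty} t_n\, l_{m_n}(\gamma')=i(\lambda,\gamma').
\]

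The second step is to express $l_q$ as an integral over measured foliations. For $\theta\in[0,\pi)$, let $\mathcal{F}_\theta$ denote the horizontal measured foliation of $e^{2i\theta}q$. The known identity, up to a positive multiplicative constant independent of $q$, is
\[
L_q \;=\; \int_0^\pi \mathcal{F}_\theta\, d\theta \qquad \text{in } \mathrm{Curr}(S),
\]
so that $l_q(\alpha)$ is proportional to $\int_0^\pi i(\mathcal{F}_\theta,\alpha)\, d\theta$. Applying the first step with $\lambda=\mathcal{F}_\theta$ for each $\theta$, the integrands for $\gamma$ and $\gamma'$ coincide pointwise, and integrating yields $l_q(\gamma)=l_q(\gamma')$.

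The main obstacle is establishing the directional-integral formula for $L_q$. One route is to unwind Bonahon's construction of the Liouville current of a CAT(0) Euclidean cone metric on the universal cover and observe that, in the quadratic-differential case, the underlying visual measure on pairs of boundary points disintegrates over angles to recover the family $\{\mathcal{F}_\theta\}$. Alternatively, one can verify the identity directly on simple closed $q$-geodesics, where $l_q$ and each $i(\mathcal{F}_\theta,\cdot)$ reduce to explicit sums of trigonometric contributions over straight segments, and then extend to all of $\mathrm{Curr}(S)$ by density of weighted simple closed curves and continuity of $i$. Once this identity is in hand, the rest of the argument is purely formal.
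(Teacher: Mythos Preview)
This theorem is not proved in the present paper; it is quoted from Leininger \cite{clein1}. The only information the paper records about its proof is Theorem~\ref{thm:clein}, which says the argument factors as $\gamma\equiv_h\gamma'\Rightarrow\gamma\equiv_{si}\gamma'\Leftrightarrow\gamma\equiv_2\gamma'$. So the comparison is really between your proposal and that two-step route.

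Your argument is correct and is close in spirit to, but not identical with, that route. Your first step---passing from $\equiv_h$ to equality of intersection numbers with every $\lambda\in ML(S)$ via Bonahon's embedding and Thurston's compactification---is a currents reformulation of $\equiv_h\Rightarrow\equiv_{si}$: since weighted simple closed curves are dense in $ML(S)$ and $i$ is continuous, the two conclusions are equivalent. The genuine difference is in the second step. The cited route establishes $\equiv_{si}\Leftrightarrow\equiv_2$ by a combinatorial/length-spectrum argument internal to \cite{clein1}. You instead invoke the directional decomposition $L_q=c\int_0^\pi \mathcal{F}_\theta\,d\theta$ of the flat Liouville current and integrate. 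This formula is correct (it reduces to the elementary identity $\int_0^\pi |\cos(\theta-\phi)|\,d\theta=2$ applied segment-by-segment to a $q$-geodesic, exactly as you sketch), and once one has it the conclusion is immediate. What your approach buys is conceptual transparency---the whole statement becomes a one-line consequence of the integral formula---at the cost of importing the machinery of currents for flat metrics (existence of $L_q$ with $l_q=i(L_q,\cdot)$, and the decomposition itself), which the paper does not develop and which postdates \cite{clein1}. The cited route stays closer to first principles but is less self-contained here since it is simply quoted.
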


Consequently there are arbitrary large $\equiv _2$ - equivalence classes. In this paper we resolve Question~\ref{qu:clein} for all families $Flat(S, q)$, $q\geq 1$, proving $\equiv_{q}$ is nontrivial. In fact there are arbitrary large $\equiv_q$ - classes of curves.

\begin{thmq} For every $q_0,k\in\mathbb{Z}_+$ there are $k$ distinct homotopy classes of curves $\gamma_1,\ldots,\gamma_k \in \mathcal{C}(S)$ such that $\gamma_i\equiv_q\gamma_j$, for all $i,j$ and for all $q\leq q_0$. Thus for every $q \in \mathbb{Z}_+$, the relation  $\equiv_q$ is non-trivial.  \end{thmq}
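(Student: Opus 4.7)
The plan is to reduce the statement to a single value of $q$ via a divisibility argument, and then to produce arbitrarily large $\equiv_q$-equivalence classes by adapting Theorem \ref{thm:clein2} from $q=2$ to all $q$ and invoking Horowitz--Randol's hyperbolic construction.

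For the reduction, observe that whenever $q\mid Q$, every $q$-differential $\phi$ determines the $Q$-differential $\phi^{Q/q}$ inducing the \emph{same} flat cone metric, since $|\phi^{Q/q}|^{1/Q}=|\phi|^{1/q}$. Therefore $Flat(S,q)\subseteq Flat(S,Q)$ and the relation $\equiv_Q$ refines $\equiv_q$. Taking $Q=\mathrm{lcm}(1,2,\ldots,q_0)$ reduces the theorem to the single-$q$ statement: for every $q\in\mathbb{Z}_+$ and every $k$, there is a $k$-tuple of pairwise distinct, pairwise $\equiv_q$-equivalent free homotopy classes of curves.

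For this single-$q$ statement I would establish the analog $\gamma\equiv_h\gamma'\Rightarrow\gamma\equiv_q\gamma'$ of Theorem \ref{thm:clein2}. Granted this implication, Horowitz's trace substitutions together with Randol's observation furnish, for any $k$, a $k$-tuple of pairwise non-conjugate curves lying in a common $\equiv_h$-class, and these are then simultaneously $\equiv_q$-equivalent for every $q\leq q_0$. To prove $\equiv_h\Rightarrow\equiv_q$, I would, for each $\phi\in Flat(S,q)$, pass to the $q$-cyclic branched cover $\widetilde{S}_\phi\to S$ on which $\phi$ lifts to an abelian differential $\omega$ satisfying $\omega^q=\pi^*\phi$. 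The $|\phi|^{1/q}$-lengths of curves on $S$ pull back to $|\omega|$-lengths on $\widetilde{S}_\phi$; after squaring $\omega$ one lands in Leininger's quadratic-differential framework on $\widetilde{S}_\phi$, and the resulting length identity can be pushed back to $S$ using equivariance under the cyclic deck group.

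The principal obstacle is the non-canonicity of the cover $\widetilde{S}_\phi$: whereas for $q=2$ the orienting double cover is intrinsic to the oriented structure of $\phi$, for $q\geq 3$ the cover is branched along the (arbitrary) zero divisor of $\phi$ and carries a cyclic deck action of order $q$. One must carry out Leininger's measured-foliation/intersection-number and continuity arguments uniformly in $\phi$ --- equivalently, across all strata of the relevant stratified moduli of $q$-differentials --- and verify that cyclic-equivariant intersection data are sufficient to recover the $q$-differential length on $S$. Assembling these pieces consistently, so that the abstract trace identity furnished by Horowitz transfers to equality of flat lengths on every $\widetilde{S}_\phi$ and hence on $S$, is where I expect the main technical work to lie.
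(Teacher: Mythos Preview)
Your reduction via $Q=\mathrm{lcm}(1,\ldots,q_0)$ is fine, but the central step of your argument is false: the implication $\gamma\equiv_h\gamma'\Rightarrow\gamma\equiv_q\gamma'$ does \emph{not} hold for all $q$. In fact, the paper proves the opposite in Section~\ref{sec:infinity}: by Corollary~\ref{cor:infty2}, any fixed pair of distinct hyperbolically equivalent curves can satisfy $\gamma\equiv_q\gamma'$ for only finitely many $q$. So no amount of care with branched covers and equivariant intersection data will rescue the argument; the target lemma is simply wrong for $q$ large. The special feature of $q=2$ in Theorem~\ref{thm:clein2} is that $\equiv_2$ coincides with simple intersection equivalence $\equiv_{si}$ (Theorem~\ref{thm:clein}), and $\equiv_h\Rightarrow\equiv_{si}$; there is no analogous characterization of $\equiv_q$ for $q\geq 3$ that would let Horowitz--Randol pairs through.

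The paper's proof takes an entirely different route that avoids hyperbolic equivalence altogether. It first constructs (Lemma~\ref{lem:lem1}) a homologically nontrivial curve $\gamma$ whose $m$-geodesic representative must pass through a cone point for every $m\in Flat(S,q)$ with $q\leq q_0$; this uses a pigeonhole argument on self-intersection angles, which in $Flat(S,q)$ are constrained to multiples of $2\pi/q$. From $\gamma$ and Lemma~\ref{lem:lem2} the paper builds, for each such metric $m$, a locally convex, locally isometric map $f\colon\Gamma\to(S,m)$ from a fixed $2$-complex $\Gamma$ with $\pi_1(\Gamma)\cong F(a,b)$. The crucial point is that the induced metrics on $\Gamma$ are so restricted that the words $w_j=(ab)^{k-j}(ab^{-1})^j$, $j=0,\ldots,k-1$, all have equal length in $\Gamma$ regardless of $m$, hence equal length in $S$; they are distinguished by their images in $H_1(S,\mathbb{Z})$. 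Notice that by construction these curves are \emph{not} $h$-equivalent (Theorem~\ref{thm:qnoth}), confirming that the Horowitz--Randol mechanism is not what is operating here.
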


However, in the limit this phenomenon disappears. To describe this, define $\gamma \equiv_{\infty} \gamma '$ if and only if  $l_m(\gamma)= l_m(\gamma')$, for every $m \in Flat(S, q),$ for every $q \in \mathbb{Z}_+$. 

\begin{thminfinity}  The equivalence relation $\equiv_{\infty}$ is trivial.\end{thminfinity}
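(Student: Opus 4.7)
Given distinct classes $\gamma\not\simeq\gamma'\in\mathcal{C}(S)$, the goal is to exhibit some $q\geq 1$ and some $m\in Flat(S,q)$ with $l_m(\gamma)\neq l_m(\gamma')$. The plan is to work inside the larger space $Flat(S)$ of all NPC Euclidean cone metrics on $S$ and then descend to a $q$-differential metric by approximation.

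I would begin with a density observation. A metric $m\in Flat(S)$ is determined by an underlying conformal structure together with finitely many cone points of angles $\theta_i\geq 2\pi$ obeying the Gauss--Bonnet relation $\sum_i(\theta_i-2\pi)=-2\pi\chi(S)$. The metric lies in $Flat(S,q)$ exactly when every $\theta_i$ has the form $2\pi(q+k_i)/q$ for some integer $k_i\geq 0$, since a zero of order $k$ of a $q$-differential produces a cone of that angle. As $q$ ranges over $\mathbb{Z}_+$ these admissible angles become dense in $[2\pi,\infty)$, so by clearing denominators any $m\in Flat(S)$ can be approximated by $m_n\in Flat(S,q_n)$ for some $q_n\to\infty$. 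Because $l_m(\gamma)$ varies continuously with $m$ in the relevant topology, it will then suffice to produce a distinguishing metric anywhere in the full family $Flat(S)$.

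Next I would separate $\gamma$ from $\gamma'$ inside $Flat(S)$ by a local deformation. Fix a generic reference metric $m_0\in Flat(S)$; both classes have well-defined geodesic representatives in $m_0$, and since $\gamma\not\simeq\gamma'$ these representatives are distinct subsets of $S$, so some small open disk $U\subset S$ is traversed by one of them but not the other. I would deform $m_0$ through a one-parameter family $m_t$ that enlarges (or introduces) a cone singularity inside $U$, compensating the Gauss--Bonnet defect by trimming the angle at some distant cone point. The function $t\mapsto l_{m_t}(\gamma)-l_{m_t}(\gamma')$ is real-analytic on strata where the combinatorial type of each geodesic representative is constant, and its first-order derivative at $t=0$ can be computed from the standard first-variation formula for flat geodesics passing near a deforming cone. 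Since only one representative runs through $U$, this derivative is nonzero; by real-analyticity the difference is nonzero for all but countably many small $t$, producing a distinguishing $m\in Flat(S)$.

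Combining the two steps, I approximate such an $m$ by $m'\in Flat(S,q)$ for $q$ sufficiently large and $m'$ sufficiently close to $m$; continuity of length preserves the strict inequality, so $\gamma\not\equiv_q\gamma'$ and hence $\gamma\not\equiv_\infty\gamma'$. The main obstacle is the second step: geodesic representatives in flat cone metrics can be delicate, passing through pre-existing cone points, sharing long saddle connections, or self-intersecting in ways that entangle where they do and do not enter $U$. Sorting out these coincidences --- either by choosing $m_0$ and $U$ generically enough to avoid them, or by passing to higher-order variations of $l_{m_t}$ --- is where the technical work lives, but the underlying intuition that two distinct geodesic loops can always be pried apart by a localized metric deformation is what ultimately forces $\equiv_{\infty}$ to be trivial.
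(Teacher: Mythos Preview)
Your two-step strategy---show that $\bigcup_q Flat(S,q)$ is dense in $Flat(S)$, then separate $\gamma$ from $\gamma'$ by some metric in $Flat(S)$---is exactly the paper's route. But both of your implementations have gaps, and the first contains a genuine error.

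For density, you assert that $m\in Flat(S,q)$ \emph{exactly} when every cone angle has the form $2\pi(q+k_i)/q$. This is only necessary, not sufficient: the holonomy group $Hol(m)$ is the image of all of $\pi_1(S\setminus X)$, and small loops around cone points do not generate it. The holonomies of loops representing $\pi_1(S)$ itself are independent data, determined for instance by the rotational parts of the side-pairing maps in a polygon description of $m$. The paper's Figure~\ref{fig:genus2hol} exhibits two genus-$2$ metrics, each with a single cone point of angle $6\pi$, one with $Hol=\{Id\}$ and one with $Hol=\langle\rho_{\pi/2}\rangle$. So perturbing cone angles alone cannot push $m$ into any $Flat(S,q)$; you must perturb the global holonomy. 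The paper does this by representing $m$ as an immersed Euclidean polygon and nudging the \emph{directions of its edges} into $\mathbb{Q}\pi$, which controls the full holonomy group since $Hol$ is generated by the rotations appearing in the side-gluings.

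For the separation step, your deformation-within-$Flat(S)$ idea is plausible but, as you concede, the flat-geodesic pathologies (non-uniqueness, cone-point concatenations, shared saddle connections) make the variational argument hard to pin down, and ``trimming'' a cone angle elsewhere may violate the NPC condition. The paper sidesteps all of this by first passing to a \emph{negatively curved Riemannian} metric $m'$: there geodesic representatives are unique, and a compactly supported conformal bump near a point of $\gamma$ not on $\gamma'$ changes $l_{m'}(\gamma)$ while fixing $l_{m'}(\gamma')$. It then returns to $Flat(S)$ by triangulating $(S,m')$ with $\gamma$ and $\gamma'$ in the $1$-skeleton and replacing each geodesic triangle by the Euclidean triangle with the same side lengths; CAT(0) angle comparison keeps the result NPC, and since $\gamma,\gamma'$ are edge-paths their lengths are preserved exactly. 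No first-variation analysis in the flat setting is needed.
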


In fact, the argument proves something stronger.

\begin{infty2} Let $\displaystyle \{q_i\}_{i=1}^\infty$ be an infinite sequence of distinct positive integers. If $\gamma \equiv_{q_i} \gamma'$ for every $i=1,\, 2, \ldots,$ then $\gamma=\gamma'$ in $\mathcal{C}(S)$.\end{infty2}

Therefore, not all $\equiv_q$ are the same equivalence relations. However we have:

\begin{thm12} For every $\gamma,\gamma'\in\mathcal{C}(S)$, $\gamma \equiv_{1} \gamma' \, \, \Leftrightarrow \, \, \gamma\equiv_{2}\gamma'$.\end{thm12}

In \cite{clein1} it is also shown that the implication in Theorem~\ref{thm:clein2} can not be reversed. As a consequence of our construction we see that a similar statement is true for any $q\in\mathbb{Z}_+$. 

\begin{thmqnoth} For every $q \in \mathbb{Z}_+$, there exist $\gamma, \gamma'\in \mathcal{C}(S)$ so that  $\gamma\equiv_q\gamma'$ but  $\gamma\not\equiv_h\gamma' $. \end{thmqnoth}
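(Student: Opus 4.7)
The plan is to observe that the family of curves produced in the proof of Theorem~\ref{thm:thmq} already contains the pairs we need. That theorem builds, for fixed $q_0$ and $k$, a collection of $k$ distinct homotopy classes $\gamma_1,\ldots,\gamma_k$ which are pairwise $\equiv_q$-equivalent for every $q\leq q_0$. The task is then to verify that such a family must contain at least one pair that is not $\equiv_h$-equivalent, and take this to be our $(\gamma, \gamma')$.

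First I would revisit how the $\equiv_q$-equivalences in Theorem~\ref{thm:thmq} arise. They come from flat-geometric rearrangements (permuting saddle connection segments in a cylinder decomposition, or substitutions tailored to the singular flat structure of a $q$-differential) designed to preserve every $q$-differential length. These moves differ structurally from Horowitz-style word substitutions in $\pi_1(S)$: they reorder straight subarcs in a way that is invisible to flat length but that hyperbolic geodesics generically detect, since a hyperbolic geodesic is sensitive to the order of its subarcs while a straight-line flat geodesic is not.

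Second, I would isolate one explicit pair $(\gamma,\gamma')$ from the constructed family and exhibit a single hyperbolic metric $m_0$ under which $l_{m_0}(\gamma)\neq l_{m_0}(\gamma')$. The concrete route is to write down explicit words representing $\gamma$ and $\gamma'$ in $\pi_1(S)$, fix a convenient Fuchsian representation $\rho:\pi_1(S)\to\mathrm{PSL}(2,\mathbb{R})$ (for example one coming from a highly symmetric hyperbolic structure on $S$), and compare $|\mathrm{tr}(\rho(\gamma))|$ with $|\mathrm{tr}(\rho(\gamma'))|$. Distinct traces for these matrices immediately yield $\gamma\not\equiv_h\gamma'$, while $\gamma\equiv_q\gamma'$ holds by construction.

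The main obstacle is this last verification: confirming that at least one pair produced by the construction is genuinely distinguished by some hyperbolic structure, rather than accidentally being Horowitz-equivalent. A clean way to reduce this to a finite check is to argue by contradiction: if every pair in the family $\{\gamma_1,\ldots,\gamma_k\}$ were $\equiv_h$-equivalent, then Theorem~\ref{thm:clein2} would force much more structure on the $\equiv_2$-class than the flat-geometric moves actually impose, so that for $k$ large enough the existence of a non-$\equiv_h$ pair is automatic. Alternatively, producing one small explicit example and computing traces suffices and gives the strongest statement with the least machinery.
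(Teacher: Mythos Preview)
Your starting point is right: the curves from Theorem~\ref{thm:thmq} are exactly what one should use.  But you overlook the one-line argument that the paper actually gives, and the substitutes you propose are either not carried out or do not work as stated.

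The key fact, already established inside the proof of Theorem~\ref{thm:thmq}, is that the curves $f_*(w_j)$ satisfy
\[
[f_*(w_j)] = \pm 2(q+2)(2k-2j)[\gamma] \quad \text{in } H_1(S,\mathbb{Z}),
\]
with $[\gamma]\neq 0$ arranged in Lemma~\ref{lem:lem1}.  Since the integers $2k-2j$ for $j=0,\ldots,k-1$ are distinct positive even numbers, these homology classes are pairwise distinct even up to sign.  On the other hand, it is known (see \cite{clein1}) that $\gamma\equiv_h\gamma'$ forces $\gamma$ and $\gamma'$ to be orientable so that $[\gamma]=[\gamma']$ in $H_1(S,\mathbb{Z})$.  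Hence any two of the $f_*(w_j)$ fail to be $\equiv_h$--equivalent.  That is the entire proof.

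Your description of the construction (``permuting saddle connection segments in a cylinder decomposition'') does not match what Theorem~\ref{thm:thmq} actually does: it builds a locally convex, locally isometric map $f\colon\Gamma\to S$ from a $2$--complex $\Gamma$ with $\pi_1(\Gamma)=F(a,b)$, and the words $w_j=(ab)^{k-j}(ab^{-1})^j$ all have the same length in $\Gamma$.  Your proposed trace computation would in principle work but is never performed, and the ``contradiction via Theorem~\ref{thm:clein2}'' does not go through as written: Theorem~\ref{thm:clein2} only says $\equiv_h\Rightarrow\equiv_2$, and since the $f_*(w_j)$ are already $\equiv_2$--equivalent by construction (for $q_0\geq 2$), assuming they are $\equiv_h$--equivalent yields no new constraint.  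A valid finiteness argument (any $\equiv_h$--class is finite, so take $k$ large) exists along those lines, but that is not what you wrote, and it is still more work than simply reading off the homology classes.
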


Although we will work exclusively with closed surfaces, there are versions of the theorems for punctured surfaces. The proofs of these require slightly different arguments. So for the sake of simplifying the exposition, the main body of the paper treats only closed surfaces. We will discuss modifications to the statements and proofs for punctured surfaces in Section~\ref{sec:punctures}.\\

The outline of the paper is as follows. In Section~\ref{sec:Background} we define Euclidean cone and flat metrics, give standard definitions and state known theorems.  Section~\ref{sec:1and2} contains the proof of Theorem~\ref{thm:12}. In Section~\ref{sec:infinity} we show that for every metric $m\in Flat(S)$ there is a sequence of metrics in  $\bigcup _{q \in \mathbb{Z}_+}Flat(S,q)$ that converge to $m$ and using this we prove that the equivalence relation $\equiv_{\infty}$ is trivial [Theorem~\ref{thm:infinity}]. Then in Section~\ref{sec:q} we describe constructions of curves reflecting properties of the metrics in $Flat(S,q)$ and we prove Theorem~\ref{thm:thmq}. Finally, in Section~\ref{sec:punctures} we sketch proofs of these theorems for punctured surfaces.\\

{\bf Notes}: The construction of Horowitz and Randol is studied in \cite{anderson}, where a number of variants are also surveyed.  These include analogous  results for hyperbolic $3$--manifolds \cite{masters} and for metric graphs \cite{kapovich}.\\

\textbf{Acknowledgment}: I would like to thank my advisor Christopher Leininger for very useful discussions, suggestions and comments. I would also like to thank the referee for carefully reading the paper.\\

\section{Background}
\label{sec:Background}

Let $S$ denote a closed oriented surface of genus at least $2$.

\subsection{Euclidean Cone Metrics}
\label{sec:Background1}

A metric $m$  on $S$ is called a \emph{Euclidean cone metric} if it satisfies the following properties:
\begin{enumerate}
\item[(i)]$m$ is a geodesic metric (not necessaraly uniquely geodesic): the distance between 2 points is the length of a geodesic path between them. 

\item[(ii)]There is a finite set $X \subset S$ such that $m$ on $S\setminus X$ is Euclidean, that is, locally isometric to $\mathbb{R}^2$ with the Euclidean metric.

\item[(iii)]For every $x\in X$, there exists $\epsilon > 0$ so that $B_{\epsilon}(x)$ is isometric to some cone. More precisely $B_{\epsilon}(x)$ is isometric to the metric space obtained by gluing together some (finite) number of sectors of $\epsilon $ -balls about 0 in $\mathbb{R}^2$.  Each $x$ therefore has a well defined cone angle $c(x)\in \mathbb{R}_+$ which is the sum of the angles of the sectors used in construction. See Figure~\ref{fig:tripolukruga}. For any $x \in S\setminus X$ we define $c(x)=2\pi$. 

\end{enumerate}

See Figure~\ref{fig:genus2} for an example of a Euclidan cone metric on a genus $2$ surface.\\ 

\begin{figure}[htb]
\centering
\includegraphics[height=1in]{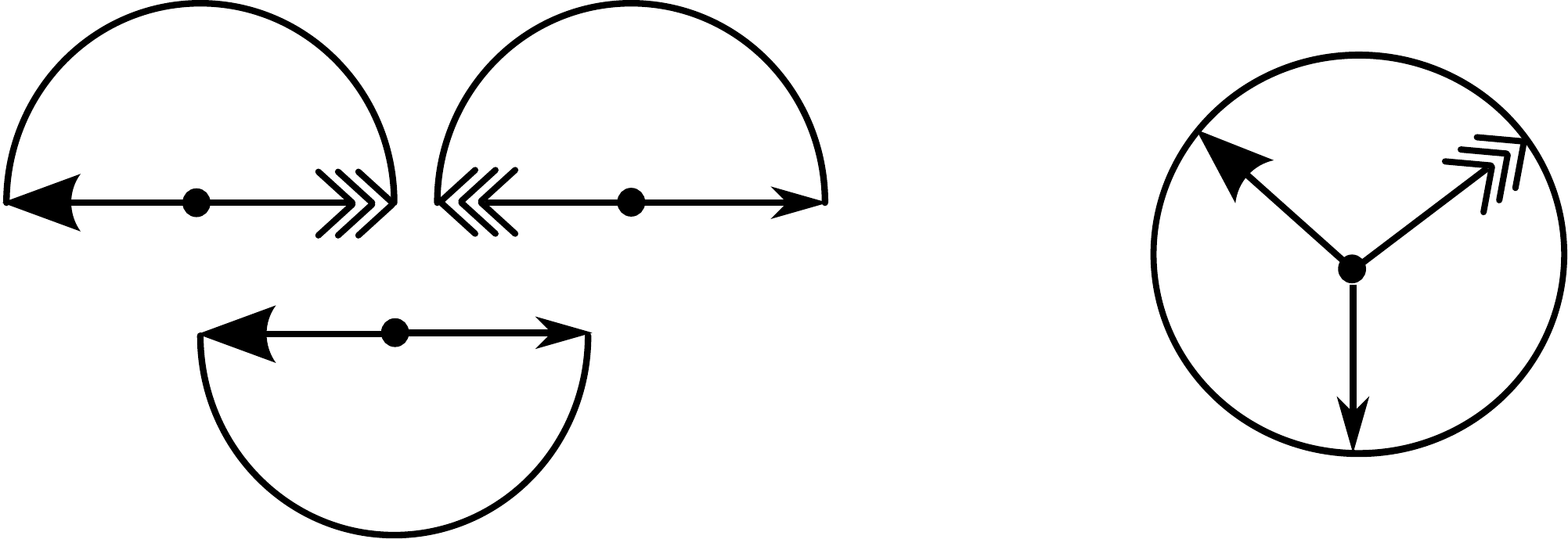}
\caption{An example of a cone angle $c(x)=3\pi$.}
\label{fig:tripolukruga}
\setlength{\unitlength}{1in}
   \begin{picture}(0,0)(0,0)
     \put(-0.3,1.2){$x$}
     \put(-1.12,1.2){$x$}
     \put(-0.78,0.75){$x$}
     \put(1,1.07){$x$}
   \end{picture}
\end{figure}


\begin{figure}[htb]
\centering
\includegraphics[height=1in]{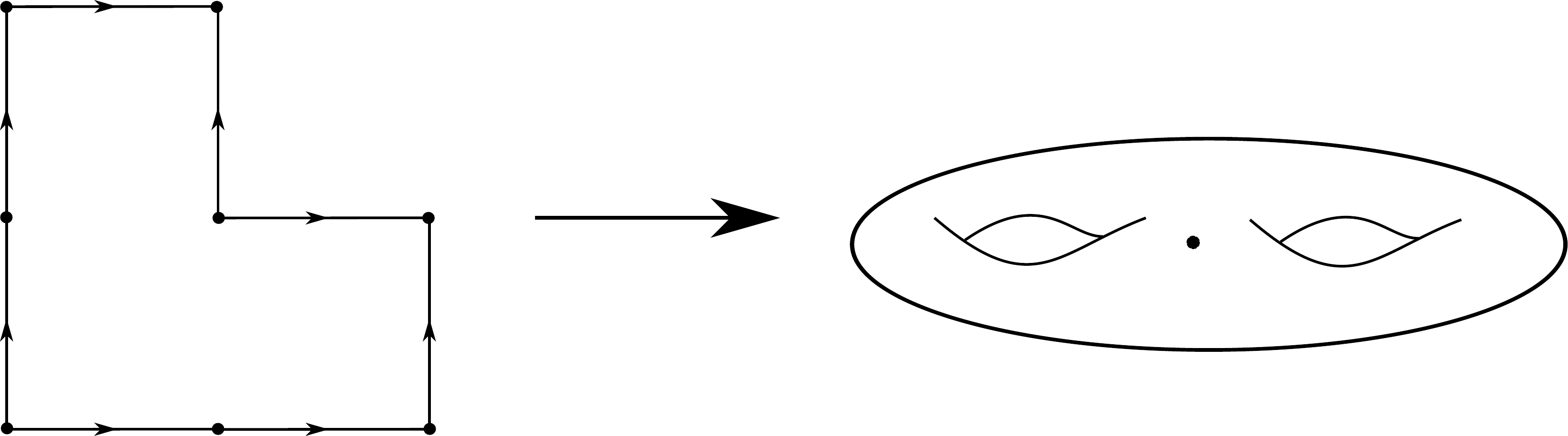}
\caption{An example of a surface with Euclidean cone metric obtained by gluing sides of a polygon in $\mathbb{C}$ by translations as indicated. The result is a surface of genus 2 with a single cone point $x$ with $c(x)=6\pi$.}
\label{fig:genus2}
 \begin{picture}(0,-13)(0,-13)
 	 \put(-138,100){$1$}
     \put(-91,100){$1$}
      \put(-138,65){$2$}
     \put(-55,65){$2$}
      \put(-115,125){$3$}
     \put(-115,41){$3$}
      \put(-81,90){$4$}
     \put(-81,41){$4$}
     \put(65,87){$x$}
   \end{picture}
\end{figure}

The \emph{holonomy} homomorphism associated to $m$ at any point $y \in S\setminus X$ is a homomorphism $$\rho_y : \pi_1 (S\setminus X,y) \rightarrow O(T_y(S\setminus X))$$ where $O(T_y(S\setminus X))$ is a group of orthogonal transformations of the tangent space of $S\setminus X$ at $y$. This is obtained by parallel translating a vector in $T_y(S\setminus X)$ along a loop in $S\setminus X$ based at $y$. Since our surface is oriented, the image is a subgroup of $SO(T_y(S\setminus X))$ - the group of rotations. \\

An orientation preserving isometry $\phi:T_y(S\setminus X) \rightarrow \mathbb{C}$ determines an isomorphism $SO(T_y(S\setminus X)) \rightarrow SO(2)$ independent of the choice of isometry $\phi$. We therefore view the holonomy homomorphism as a homomorphism to $SO(2)$. For a Euclidean cone metric $m$ define $Hol =Hol(m) \leq SO(2)$ to be the image of the holonomy homomorphism.\\ 

We will construct Euclidean cone surfaces by gluing sides of polygons by maps $\{ \rho_i \circ \tau_i \}_{i=1}^k$, which are compositions of translations $\tau_i$ and rotations $\rho_i$. Given $\gamma \in \pi_1(S\setminus X, y)$, $\rho_y(\gamma)$ is given by the composition of the rotations for the side gluings of the sides of the polygons crossed by $\gamma$. Therefore  $Hol \leq \langle \rho_1,\rho_2,...,\rho_k  \rangle$. See Figure~\ref{fig:genus2hol}.\\
 
\begin{figure}[htb]
\centering
\includegraphics[height=1in]{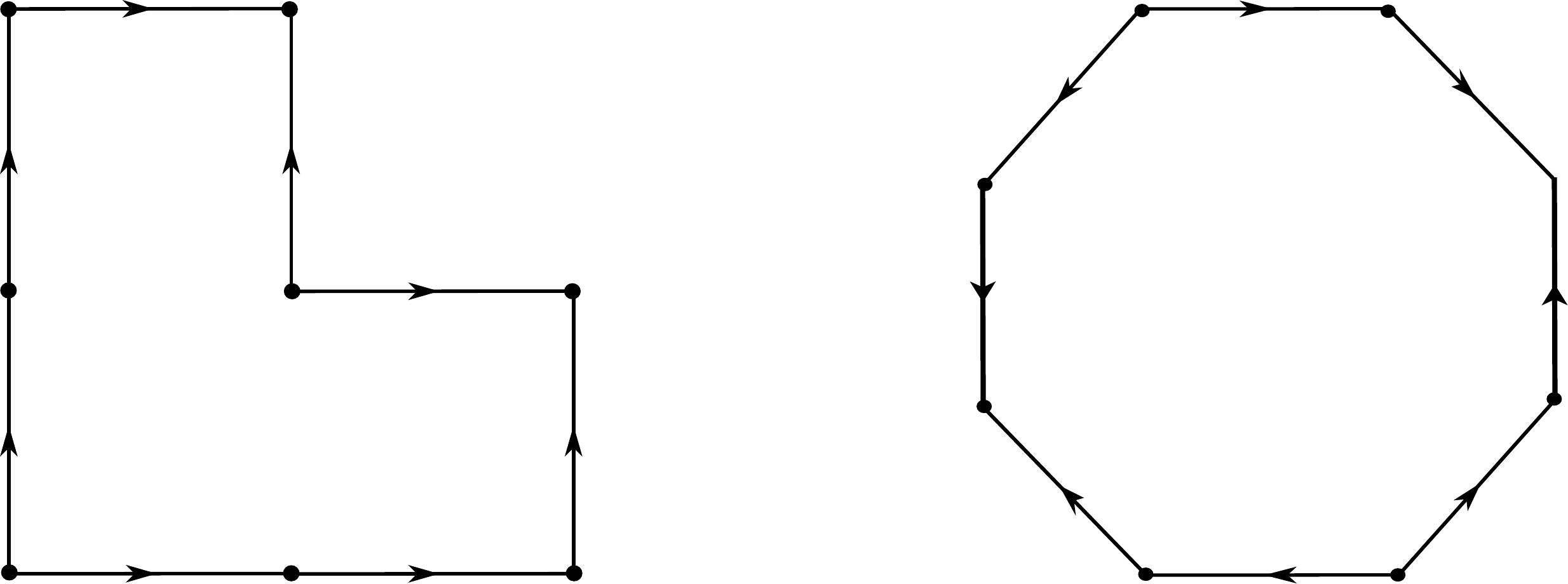}
\caption{Genus 2 surfaces with $Hol={Id}$ (on the left) and $Hol=\langle \rho_{\frac{\pi}{2}} \rangle$ (on the right).}
\label{fig:genus2hol}
\begin{picture}(0,0)(0,0)
 	 \put(-105,85){$1$}
     \put(-58,85){$1$}
      \put(-105,50){$2$}
     \put(-23,50){$2$}
      \put(-80,112){$3$}
     \put(-80,27){$3$}
      \put(-45,76){$4$}
     \put(-45,27){$4$}
     
      \put(56,112){$1$}
      \put(87,97){$2$}
      
      \put(56,27){$3$}
      \put(27,97){$4$}
      
     \put(88,43){$2$}
     \put(26,43){$4$}
     
     \put(98,69){$1$}
     \put(16,69){$3$}
   \end{picture}
\end{figure}

In fact, we can obtain any Euclidean cone surface by gluing sides of a single \emph{generalized Euclidean polygon} immersed in $\mathbb{C}$. To explain, consider a triangulation of $S$ by Euclidean triangles for which the vertex set is precisely the set of cone points.  Such a triangulation exists by Theorem 4.4. in \cite{masur}, for example (this is actually a $\Delta$--complex structure as in \cite{hatcher} instead of a proper triangulation, but the distinction is unimportant). We get a dual graph of the triangulation constructed by defining a vertex for each triangle and an edge for each pair of triangles that share an edge. See Figure~\ref{fig:dualgraph}. This graph has a maximal tree. Now by cutting along the edges of the triangles whose dual edges do not belong in the maximal tree we get a simply connected surface that is a union of Euclidean triangles which isometrically immerses in the plane. This is the generalized Euclidean polygon, which we denote $P$. The surface $S$ can be reconstructed from $P$ by gluing pairs of edges. See Figure~\ref{fig:immersed}. If we glue $P$ by translations and rotations $\{ \rho_i \circ \tau_i \}_{i=1}^k$, then $Hol=\langle \rho_1,\rho_2,\ldots, \rho_k  \rangle$.\\

\begin{figure}[htb]
\centering
\includegraphics[height=1in]{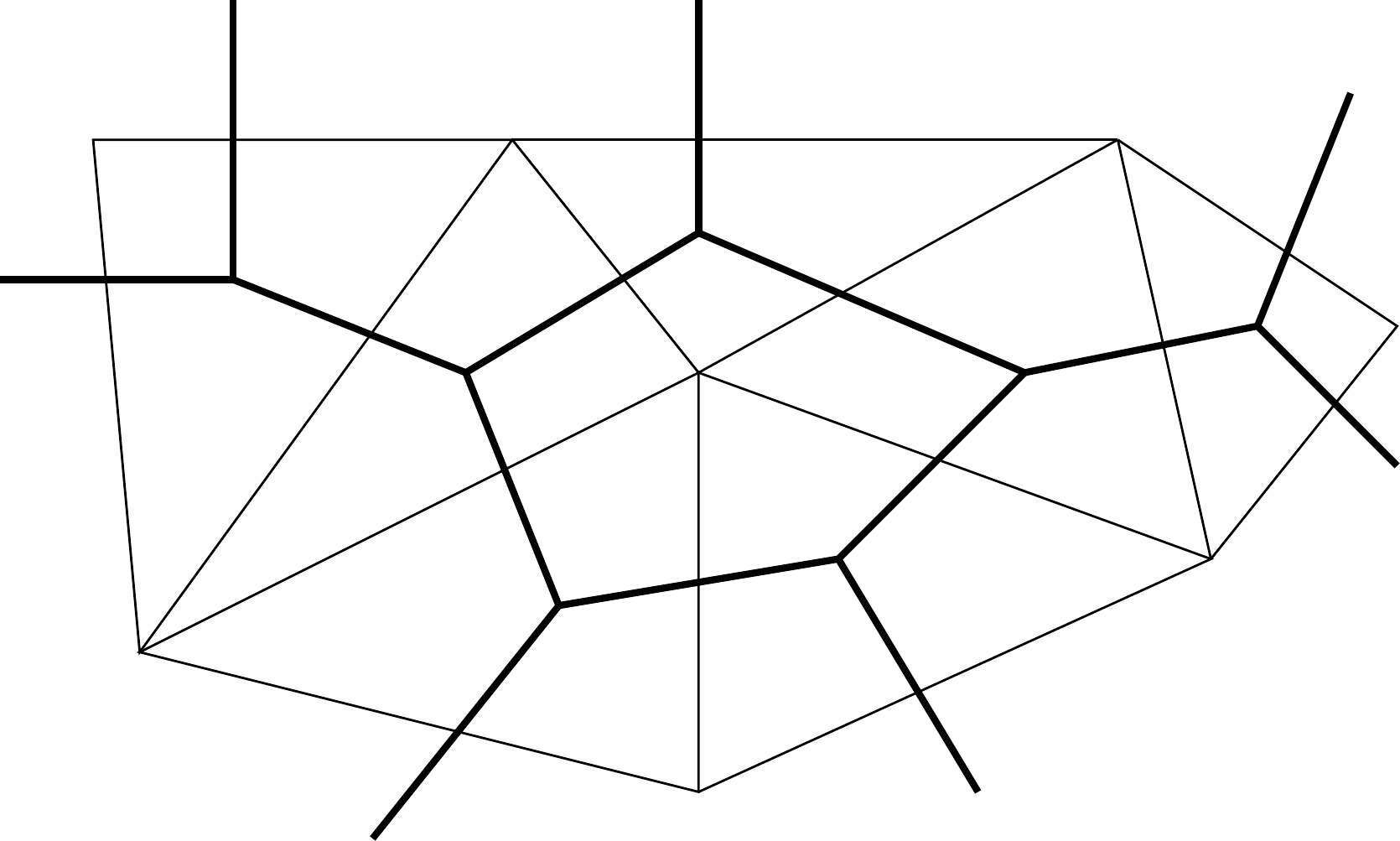}
\caption{The dual graph of a triangulation.}
\label{fig:dualgraph}
\end{figure}

\begin{figure}[htb]
\centering
\includegraphics[height=1.8in]{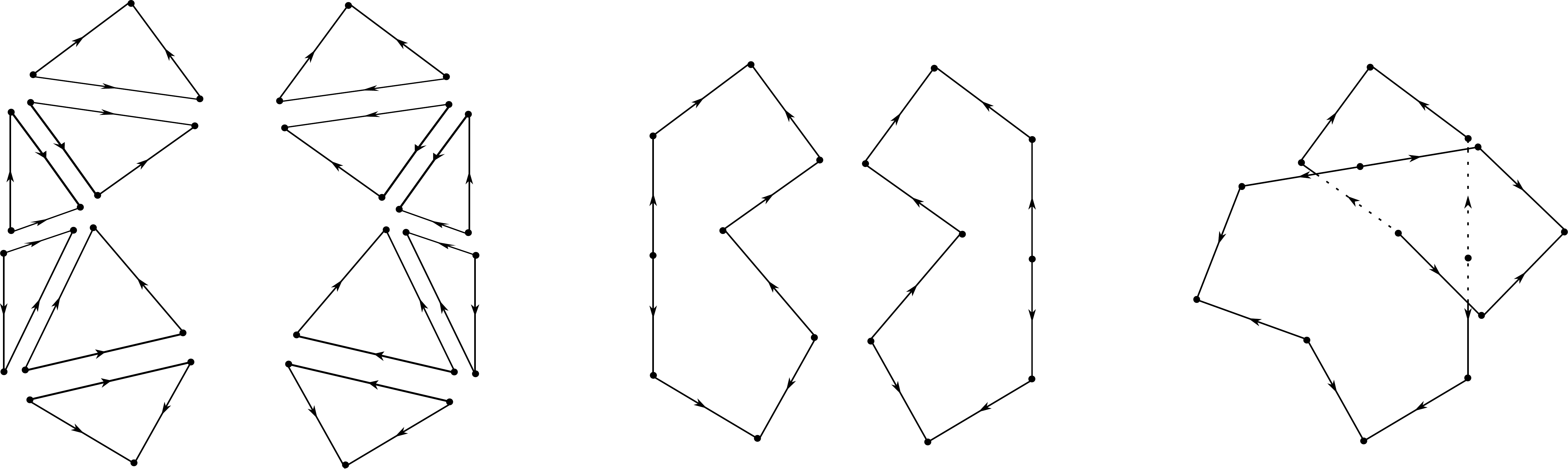}
\caption{A genus 2 surface obtained by gluing triangles (on the left), two polygons (in the middle) and by gluing an immersed polygon (on the right).}
\label{fig:immersed}
\begin{picture}(-8,0)(-8,0)
 	 
     \put(0,150){$1$}
      \put(25,150){$2$}
      \put(55,155){$5$}
      \put(70,127){$6$}
      
       \put(70,90){$5$}
      \put(2,65){$1$}
      \put(25,65){$2$}
      \put(55,58){$6$}
      
      \put(-31,155){$3$}
      \put(-45,127){$4$}
        \put(-45,90){$3$}
        \put(-30,58){$4$}
        
        \put(-2,120){$7$}
        \put(27,120){$7$}
        
        \put(-2,100){$8$}
        \put(27,100){$8$}
     \end{picture}
\end{figure}

Another important tool for us is the following well known fact:

\begin{prop} \label{prop:Gauss} \emph{(Gauss-Bonnet formula)} Let $R$ be a closed surface of genus $g \geq 0$ equipped with a Euclidean cone metric $m$. Then $$2\pi \chi(R)= \displaystyle \sum_{x \in X}(2\pi-c(x))$$ where $X$ is the set of cone points. \end{prop}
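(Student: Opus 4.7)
The plan is to exploit the triangulation of $R$ by Euclidean triangles whose vertex set is exactly $X$, as was just recalled from \cite{masur}. Write $V$, $E$, $F$ for the numbers of vertices, edges, and $2$-cells of this $\Delta$--complex, so $V=|X|$ and $\chi(R)=V-E+F$ by the usual cellular computation. The overall strategy is to evaluate the total of all interior triangle angles in two different ways and then invoke Euler's formula to extract $\chi(R)$.

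On the one hand, each face is a Euclidean triangle (isometric to an honest triangle in $\mathbb{R}^2$ through the flat chart), so its three interior angles sum to $\pi$, giving a grand total of $\pi F$. On the other hand, I would group the corners according to the vertex at which they meet: the corners incident to a given cone point $x\in X$ tile the cone neighborhood around $x$ isometrically, so they must sum to exactly the cone angle $c(x)$. Equating the two totals gives
$$\pi F \;=\; \sum_{x\in X} c(x).$$

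To close the argument I would use the standard double count $3F=2E$ (each triangle contributes three edge-incidences, and each edge is incident to two triangle-sides, counted with multiplicity). Combined with $\chi(R)=V-E+F$ this gives $\chi(R)=V-F/2$, i.e.\ $F=2|X|-2\chi(R)$. Substituting into the angle identity yields
$$2\pi|X| \;-\; 2\pi\chi(R) \;=\; \sum_{x\in X} c(x),$$
which rearranges to the stated formula. There is no serious obstacle here; the one minor point that needs a line of justification is that the combinatorial identities $V-E+F=\chi(R)$ and $3F=2E$ remain valid for the $\Delta$--complex structure produced in \cite{masur} even though it need not be a genuine simplicial complex, which is standard.
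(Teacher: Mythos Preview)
Your argument is correct. The angle-counting via a Euclidean triangulation with vertex set $X$, together with $3F=2E$ and $\chi(R)=V-E+F$, yields the formula exactly as you wrote; the only caveat you flagged (that these combinatorial identities hold for a $\Delta$--complex) is indeed standard.

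By way of comparison: the paper does not actually supply a proof of this proposition. It simply records the statement and refers the reader to \cite{neumann} for a proof in the broader setting of constant-curvature geometric structures. So your proposal is strictly more than what appears in the paper---you give a short self-contained argument tailored to the Euclidean cone case, whereas the paper outsources the result entirely. The advantage of your approach is that it stays inside the elementary combinatorics already set up in Section~\ref{sec:Background1} (the triangulation from \cite{masur}); the advantage of citing \cite{neumann} is brevity and the observation that the formula is a special case of a more general Gauss--Bonnet theorem.
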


\noindent The  proof of the Gauss-Bonnet formula  for compact surfaces with geometric structure of constant curvature can be found in \cite{neumann}.\\

\subsection {Flat Metrics}

A Euclidean cone metric is called NPC (non-positively curved) if it is locally CAT(0). By the Gromov Link Condition this is equivalent to $c(x)\geq 2\pi$ for all $x\in S$ (See \cite{bridson}). For example the surface in Figure~\ref{fig:genus2} has one cone point and its angle is $6\pi$, and so is NPC.\\

Define \begin{quote} $Flat (S) =\{m \, | \, m$  is NPC Euclidean  cone metric on $S\}.$ \end{quote}

\noindent By a metric on $S$ we mean a metric inducing the given topology.\\

We are interested in the following class of metrics: For any $q \in \mathbb{Z_+}$ define
$$Flat(S, q)=\{m\in Flat(S)\, | \, Hol(m) \leq \langle \rho_\frac{2\pi}{q} \rangle \}$$
where $\rho_\theta$ is a rotation by angle $\theta$.\\

Alternatively, $Flat(S,q)$ is the space of metrics coming from $q$--differentials on $S$. For any $q \in\mathbb{Z}_+$, a $q$--differential is a complex structure and a family of holomorphic functions $\varphi_j$ on $z_j(U_j)$ for all coordinate neighborhoods $(U_j, z_j)$ of $S$, so that on $U_j\cap U_i$ they satisfy $$\varphi_j(z_j)=\varphi_i(z_i)\left(\frac{dz_i}{dz_j}\right)^q.$$  See  Chapter $II$ of \cite{farkas}, for example. It is customary to denote the $q$--differential $\varphi$ in coordinates $z_i$ by $\varphi = \varphi_i(z_i) dz_i^q$.\\

To explain how to get a metric in $Flat(S,q)$, suppose we are given a complex structure and a holomorphic $q$--differential $\varphi$. We can pick a small disk neighborhood $U$ of any point $p_0\in S$ with $\varphi(p_0)\not=0$, containing no zeros of $\varphi$, and define preferred coordinates $\zeta$ for $\varphi$ by $$\zeta(p)=\displaystyle \int_{p_0}^p \sqrt[q]{\varphi}.$$ In these coordinates $\varphi=d \zeta^q$. Let $\zeta_1,\zeta_2$ be two preferred coordinates, so that on the overlap of their domains we have $d \zeta_1^q = d \zeta_2^q$.  Since this is possible if and only if $\zeta_2=e^{\frac{2\pi ik}{q}}\zeta_1+w$ for some $k\in\mathbb{Z}_+$ and $w\in\mathbb{C}$, the preferred coordinates give an atlas of charts on $S \setminus \{zeros(\varphi)\}$ to $\mathbb{C}$ with transition functions of the form $T(z)=e^{\frac{2\pi ik}{q}}z+w$ where $k\in\mathbb{Z}_+$, $w\in\mathbb{C}$. Pulling back the Euclidean metric we get Euclidean metric on $S\setminus\{zeros(\varphi)\}$. The completion of this metric is obtained by adding back in $\{zeros(\varphi)\}$ and at a zero of order $k$ we have a cone angle $2\pi+\frac{2\pi k}{q}$. Therefore, the metric lies in $Flat(S, q)$. Conversely, take any metric in $Flat(S, q)$, choose local coordinates away from the singularities which are local isometries and so that the transition functions are translations and rotations by integer multiples of $\frac{2\pi}{q}$. Since these are holomorphic transformations preserving $dz^q$, this determines a complex structure and $dz^q$ determines a holomorphic $q$--differential, and this extends over the singularities (compare with \cite{minsky} and \cite{strebel}, for example, for the case $q=2$).\\

To give some idea of how ``big'' $Flat(S,q)$ is, we calculate the dimension. By the Riemann--Roch Theorem the dimension of the space of holomorphic $q$--differentials on a Riemann surface of genus $g$ is $2(2q-1)(g-1)$.  The space $\mathcal Q_q$ of all holomorphic $q$--differentials on $S$ is a vector bundle over Teichm\"uller space, and since every two $q$--differentials that differ by some rotation define the same metric on $S$, we get the real dimension of $Flat(S,q)$: $$dim(Flat (S,q))=dim(\mathcal{Q}_q)+dim(\mathcal{T}(S))-1$$
 $$\hspace{2.9cm}=2(2q-1)(g-1)+6g-7.$$ 
 
\vspace{0.5cm}

\noindent For every $q\in\mathbb{Z}_+$,  $Flat(S,q) \subset Flat(S)$ and $\displaystyle \lim_{q \to \infty}dim(Flat(S,q))=\infty$. Thus, $$dim(Flat(S))=\infty.$$\\

Observe that $q_1|q_2$ if and only if $\rho_{\frac{2\pi}{q_1}}\in \langle \rho_\frac{2\pi}{q_2} \rangle$. In fact $q_1|q_2$ if and only $Flat(S,q_1)\subseteq Flat(S,q_2)$.\\

\subsection{Closed curves}

Given a metric $m$ and a homotopy class of curve $\gamma \in \mathcal{C}(S)$  we will define the length function: $l_m(\gamma)= \inf \{l_m(c) | c \in \gamma \}$. For every curve $\gamma \in \mathcal{C}(S)$ there is a geodesic representative on $S$ due to Arzela--Ascoli theorem. Therefore $l_m(\gamma)$ is the length of its $m$-geodesic representative.\\

\begin{prop} For $m$ in $Flat(S)$, a closed curve $\gamma$ is an $m$-geodesic if and only if $\gamma$ is a closed Euclidean geodesic or a concatination of Euclidean segments between cone points such that angles between consecutive segments are $\geq \pi$ on each side of the curve $\gamma$. (See Figure~\ref{fig:geodesics}.)  \end{prop}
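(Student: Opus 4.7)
My plan is to use the NPC hypothesis to pass to the universal cover $\widetilde{S}$, which by the Gromov Link Condition (already discussed in the Flat Metrics subsection) is a CAT(0) space. A closed curve $\gamma$ is an $m$-geodesic in its homotopy class if and only if a lift $\widetilde{\gamma}$ is a bi-infinite (globally) geodesic in $\widetilde{S}$, and in a CAT(0) space a locally geodesic path is automatically globally geodesic. Thus both directions reduce to a \emph{local} characterization of geodesics at each point $p \in \gamma$.

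For the forward direction, suppose $\gamma$ is an $m$-geodesic, and consider $p \in \gamma$. If $p \notin X$, a small neighborhood of $p$ is isometric to a planar disk, so local length minimization forces $\gamma$ to be a Euclidean straight segment through $p$. If $p \in X$, I would unroll: pick points $p_1, p_2 \in \gamma$ slightly before and after $p$, and look at one side of $\gamma$ near $p$, which is a sector of the cone of angle $\alpha$. If $\alpha < \pi$, this sector embeds isometrically in the plane as a genuine planar sector of angle $< \pi$, and the Euclidean segment from $p_1$ to $p_2$ lies inside that sector and is strictly shorter than $[p_1, p] \cup [p, p_2]$, contradicting that $\gamma$ minimizes length in its homotopy class. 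Hence both side angles at $p$ must be $\geq \pi$. Together with the Euclidean condition off $X$, this gives the stated form.

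For the reverse direction, assume $\gamma$ is either a closed Euclidean geodesic (missing $X$) or a concatenation of Euclidean segments between cone points meeting the angle condition. I need to check local length minimality everywhere. Off $X$ this is immediate, and at a cone point $p \in \gamma$ with both side angles $\geq \pi$, one argues that every short path $\eta$ joining points $p_1, p_2 \in \gamma$ on opposite sides of $p$ has length at least $d(p_1,p) + d(p,p_2)$. This can be done either by the Alexandrov/CAT(0) angle comparison on a small ball around $p$, or directly: if $\eta$ stays on one side of $\gamma$ near $p$, unroll that side into the plane as a sector of angle $\geq \pi$; the segment of $\gamma$ through $p$ then unrolls to the concatenation of two segments meeting at angle $\geq \pi$, i.e.\ a straight line or a ``reflex'' broken line, and either way the length from $p_1$ through $p$ to $p_2$ is the Euclidean distance between the images, hence minimal. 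So $\gamma$ is a local geodesic, hence a geodesic, by the CAT(0) property of $\widetilde{S}$.

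The main obstacle is the cone-point case in both directions: making the unrolling-and-shortening argument precise when $\alpha < \pi$, and, conversely, verifying that when $\alpha \geq \pi$ on each side no competing path can do better than passing through $p$. Once this local picture at cone points is nailed down, the CAT(0) promotion from local to global, and the non-cone-point case, are routine.
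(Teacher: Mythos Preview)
Your outline is sound and the forward direction matches the paper's argument exactly: away from cone points straightness is forced, and at a cone point with a side angle $\alpha<\pi$ one unrolls that side into a planar sector and shortens. For the converse the paper also reduces, via NPC, to a local check near each cone point, but its mechanism differs from yours: it builds an explicit distance-nonincreasing retraction onto $\gamma$ in a small ball about $x$. One erects rays at $x$ perpendicular to each of $\gamma_-,\gamma_+$; since both side angles are $\geq\pi$, the two resulting $\pi$-fans about $\gamma_-$ and $\gamma_+$ are disjoint, so orthogonal projection in each fan together with the constant map to $x$ on the remainder patch to a well-defined $1$-Lipschitz map onto $\gamma$. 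Any competing path is then at least as long as its image, which is a path on $\gamma$ from $p_1$ to $p_2$. This handles paths on either side and paths that cross $\gamma$ in one stroke, which your unrolling sketch does not explicitly address. Your unrolling also contains a small slip: when the side angle is strictly $>\pi$, the broken line $p_1\to p\to p_2$ is \emph{not} the Euclidean distance between the developed images (the straight segment exits the sector); what is true, and what you need, is that it is the shortest path \emph{within} the sector. Your alternative via Alexandrov comparison is the cleaner fix: in a cone with both side angles $\geq\pi$ the intrinsic distance from $p_1$ to $p_2$ equals $d(p_1,p)+d(p,p_2)$, so the concatenation is genuinely geodesic.
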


\begin{figure}[htb]
\centering
\includegraphics[height=1.2in]{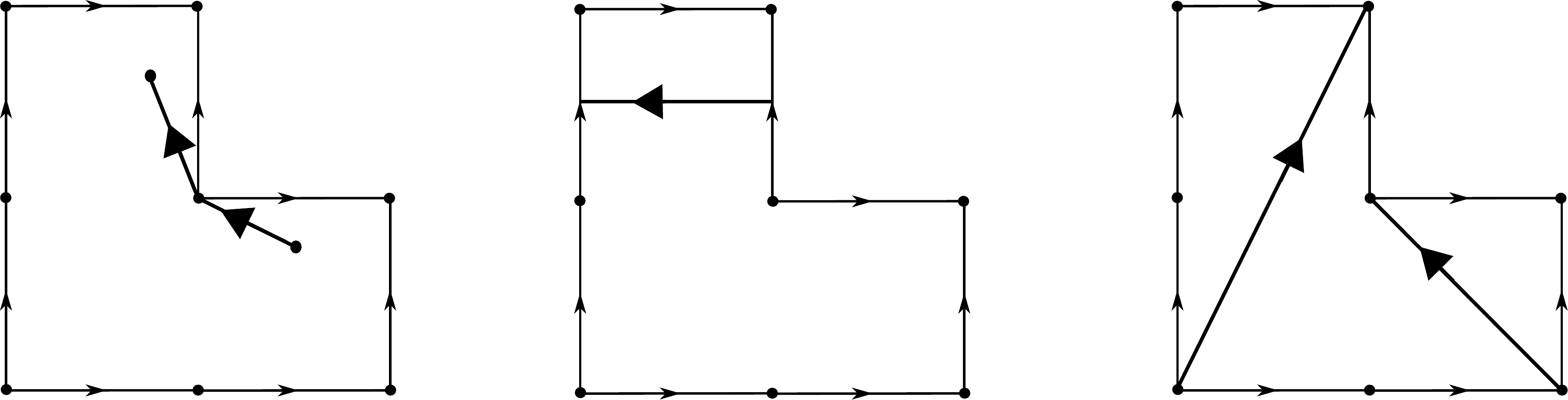}
\caption{A geodesic through a cone point (on the left), a closed geodesic with no cone points (in the middle) and a closed self-intersecting geodesic containing a cone point (on the right). See Figure~\ref{fig:genus2} for the gluings.}
\label{fig:geodesics}
\end{figure}

\begin{proof}
Assume $\gamma$ is a geodesic. Away from cone points $m$ is Euclidean, thus in the complement of the cone points geodesics are straight Euclidian segments. If $\gamma$ enters a cone point $x$ and exits at an angle less than $\pi$ we can find a path shorter than $\gamma$ in the neighborhood of the cone point. Therefore all geodesics have to make an angle greater than equal to $\pi$ on both sides around a cone point.\\

Conversely, because of the non-positive curvature, to see that paths satisfying this conditions are geodesics we just need to show that they locally minimize the length. For this we only need to check near the cone points. Let $x$ be a cone point on $\gamma$ and denote each ray of $\gamma$ coming out of $x$ inside a small ball $B$ around $x$ containing no other cone point, with $\gamma_-$ and $\gamma_+$. Construct two different straight line rays starting at $x$ and making angles $\frac{\pi}{2}$ with $\gamma_-$ on either side, and do the same for $\gamma_+$. See   Figure~\ref{fig:projection}. Notice that these rays define two non-intersecting neighborhoods of $\gamma_-$ and $\gamma_+$ since angles on each side of $\gamma$ at $x$ are greater than or equal to $\pi$. Now define a projection inside $B$ in the following way. Every point in the region bounded by the two neighborhoods of  $\gamma_-$ and $\gamma_+$ projects orthogonally onto $\gamma$, and every other point maps to $x$. This projection is distance non-increasing since orthogonal projection and projection to a point do not increase distances (and the two projections agree on the overlap of their domains). Therefore $\gamma$ is a local geodesic and that completes the proof.
 \end{proof}

\begin{figure}[htb]
\centering
\includegraphics[height=2.2in]{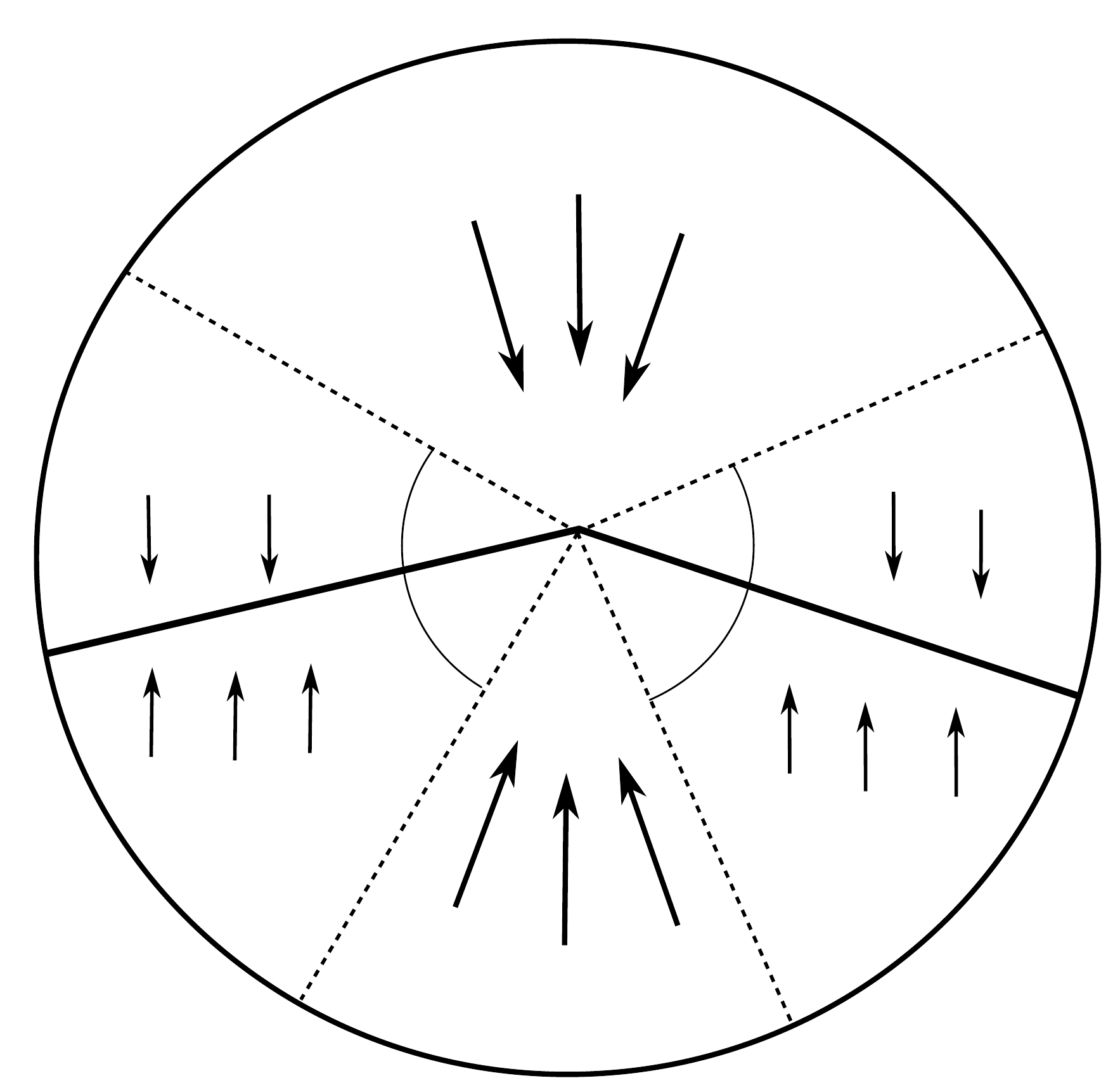}
\caption{Projection onto $\gamma$.}
\label{fig:projection}
\begin{picture}(0,2.5)(0,2.5)
 	 
     \put(0,127){$x$}
    \put(-20.5,124){$\frac{\pi}{2}$}
    \put(18.5,121){$\frac{\pi}{2}$}
    
    \put(-17,107){$\frac{\pi}{2}$}
    \put(12,106){$\frac{\pi}{2}$}
    
    \put(-75,110){$\gamma_-$}
     \put(65,105){$\gamma_+$}
     \end{picture}
\end{figure}

By Theorem~II.6.8 (4) in \cite{bridson}, if an $m$-geodesic representative of a curve in $\mathcal{C}(S)$ is not unique in its homotopy class then the set of geodesic representatives foliates a cylinder in $S$, and each geodesic representative has the same length in $m$. (See Figure~\ref{fig:geodesics2}.)\\

\begin{figure}[htb]
\centering
\includegraphics[height=1.2in]{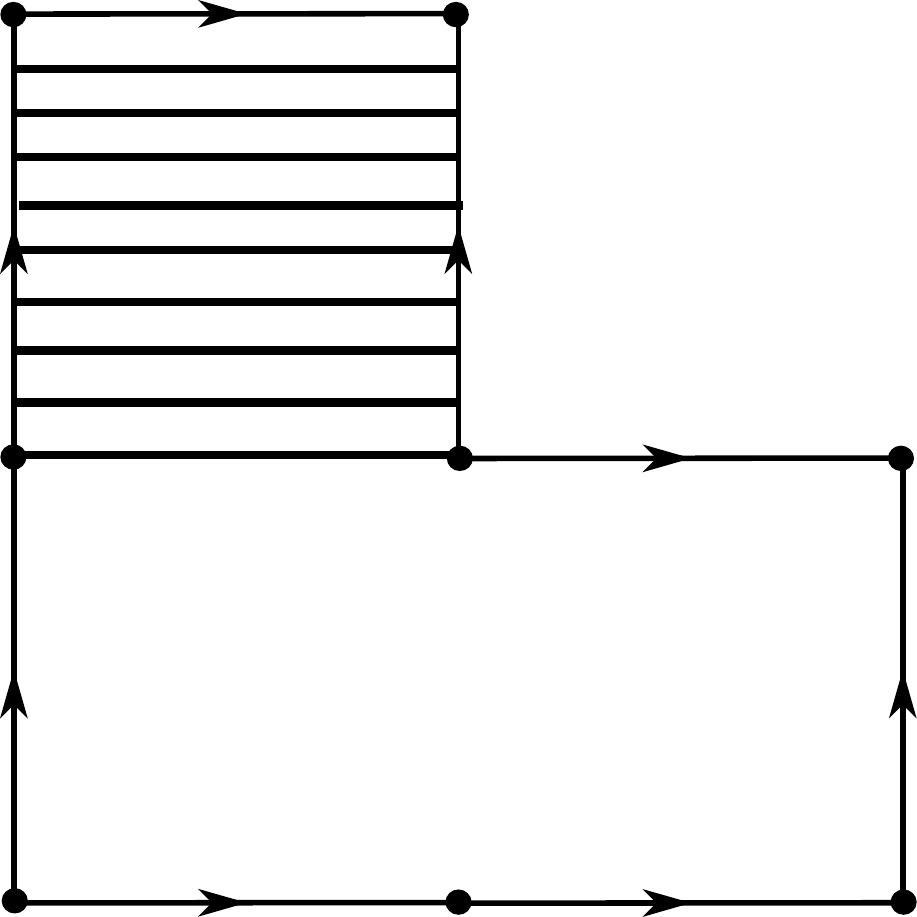}
\caption{A cylindar on a genus $2$ surface foliated by closed geodesics.  See Figure~\ref{fig:genus2} for the gluings.}
\label{fig:geodesics2}
\end{figure}

For two curves $\alpha, \beta \in \mathcal{C}(S)$ we can define the geometric intersection number, $i(\alpha,\beta)=$ minimum number of double points of intersection of any two representatives of $\alpha$ and $\beta$. For hyperbolic metrics geodesic representatives realize geometric intersection number.  \\

Define $\mathcal{S}(S) \subset \mathcal{C}(S)$ to be the set of homotopy classes of simple closed curves on $S$.\\

\noindent Combined results of Thurston's \cite{FLP}, \cite{penner}  and Bonahon's  \cite{bonahon} yield the following:

\begin{thm}\label{thm:Thurston} Given any separating simple closed curve $\beta \in \mathcal{S}(S)$,  there is a sequence $\{(t_n,\beta_n)\}_{n=1}^{\infty} \subset \mathbb{R}_+ \times \mathcal{S}(S)$ where each $\beta_n$ is non-separating, so that for all $\alpha \in \mathcal{C}(S)$, $t_ni(\beta_n,\alpha) \rightarrow i(\beta, \alpha)$ as $n \rightarrow \infty$. \end{thm}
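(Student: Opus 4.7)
The plan is to realize the separating curve $\beta$ as a limit of weighted non-separating simple closed curves in Thurston's space of measured laminations $\mathcal{ML}(S)$, and then appeal to the continuity of geometric intersection. Concretely, Thurston (as developed in detail in \cite{FLP} and \cite{penner}) compactifies Teichm\"uller space by $\mathcal{PML}(S)$, and embeds $\mathbb{R}_+\times\mathcal{S}(S)$ as a dense subset of $\mathcal{ML}(S)$ via $(t,\gamma)\mapsto t\gamma$. Bonahon \cite{bonahon} then shows that geometric intersection extends continuously from weighted simple multicurves to a function on $\mathcal{ML}(S)\times\mathcal{ML}(S)$, and, more generally, to the space of geodesic currents in which all of $\mathcal{C}(S)$ embeds. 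Granted these, the content of the theorem reduces to the statement that $\beta$ lies in the closure of the cone on the set of non-separating elements of $\mathcal{S}(S)$, together with a choice of scaling.

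To produce such an approximating sequence explicitly, I would fix a pants decomposition $\mathcal{P}$ of $S$ containing $\beta$ and work in Dehn--Thurston coordinates $(m_\gamma,\theta_\gamma)_{\gamma\in\mathcal{P}}$ on $\mathcal{ML}(S)$. In these coordinates $\beta$ is the vector with $m_\beta=1$ and all other coordinates zero. For large integers $n$, construct simple closed curves $\beta_n$ whose coordinates are obtained from $n\beta$ by a small, explicit modification: in a pair of pants adjacent to $\beta$ on each side, replace $n$ parallel copies of $\beta$ by a connected simple closed curve that runs along $\beta$ roughly $n$ times and takes a single detour (for instance, adding $m_\gamma=1$ and $m_{\gamma'}=1$ in two non-$\beta$ cuffs $\gamma,\gamma'$ on opposite sides of $\beta$). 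Such a curve is connected; one then checks that it is non-separating either by a homological computation (choose the detour to have non-trivial $\mathbb{Z}/2$-class) or by direct inspection in the pants decomposition. Setting $t_n=1/n$, the Dehn--Thurston coordinates of $t_n\beta_n$ converge to those of $\beta$, so $t_n\beta_n\to\beta$ in $\mathcal{ML}(S)$. Bonahon's continuity then yields, for every $\alpha\in\mathcal{C}(S)$, that $t_n\,i(\beta_n,\alpha)=i(t_n\beta_n,\alpha)\to i(\beta,\alpha)$.

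The principal obstacle is the density of \emph{non-separating} simple closed curves near the separating class $\beta$: the naive choice $\beta_n=\beta$ (with $t_n=1$) would satisfy everything but the non-separating requirement, and several natural operations, such as iterated Dehn twists of $\beta$ along a non-separating curve, preserve the $\mathbb{Z}/2$-homology class and hence preserve the separating property. Controlling connectedness and non-separation simultaneously with the Dehn--Thurston convergence is where the bookkeeping must be done carefully; once the Dehn--Thurston model is set up, however, this is a finite combinatorial check, and the rest of the proof is a direct application of the continuity results of Thurston and Bonahon.
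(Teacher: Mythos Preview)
The paper does not supply its own proof of this theorem: it is stated as a consequence of ``combined results of Thurston's \cite{FLP}, \cite{penner} and Bonahon's \cite{bonahon}'' and is used as a black box in Section~\ref{sec:1and2}. Your proposal is exactly the natural way to unpack that citation---density of weighted (non-separating) simple closed curves in $\mathcal{ML}(S)$ together with Bonahon's continuous extension of intersection number to currents---so your approach is the same as the paper's, just with the details made explicit. Your identification of the only nontrivial point (that \emph{non-separating} curves, not merely all simple curves, accumulate on $\beta$) and your Dehn--Thurston construction addressing it are appropriate; the caveat you flag about checking connectedness and $\mathbb{Z}/2$-homology simultaneously is real but, as you say, a finite combinatorial verification.
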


We say that $\gamma$ and $\gamma' \in \mathcal{C}(S)$ are simple intersection equivalent, $\gamma \equiv_{si} \gamma'$, if $i(\gamma,\alpha)=i(\gamma',\alpha)$ for every $\alpha\in \mathcal{S}(S)$. We will also need the next fact.

\begin{thm} \label{thm:clein} \cite{clein1} For every $\gamma$, $\gamma' \in \mathcal{C}(S)$, $\gamma \equiv_{h} \gamma '$ $\Rightarrow$ $\gamma \equiv_{si} \gamma'$ $\Leftrightarrow$ $\gamma \equiv_{2} \gamma '$.\end{thm}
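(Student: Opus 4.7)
The plan is to establish three implications in sequence: (1) $\equiv_h\Rightarrow\equiv_{si}$, (2) $\equiv_{si}\Rightarrow\equiv_2$, and (3) $\equiv_2\Rightarrow\equiv_{si}$. The first gives the non-biconditional half of the theorem, and the last two together give the equivalence.

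For (1), fix an arbitrary $\alpha\in\mathcal{S}(S)$. I would use the standard pinching behavior of hyperbolic length: along a Fenchel--Nielsen deformation $\{m_n\}$ with $l_{m_n}(\alpha)\to 0$, a Margulis-tube/collar-lemma computation gives $l_{m_n}(\gamma)/|\log l_{m_n}(\alpha)|\to 2\,i(\alpha,\gamma)$ for any $\gamma\in\mathcal{C}(S)$. Since $\gamma\equiv_h\gamma'$ forces $l_{m_n}(\gamma)=l_{m_n}(\gamma')$ for every $n$, the two limits must agree, so $i(\alpha,\gamma)=i(\alpha,\gamma')$. Equivalently one can use the continuous extension of hyperbolic length to $\mathcal{ML}(S)$ and read off intersection numbers from the asymptotics along the rays $t\alpha$ as $t\to 0$.

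For (2), let $m\in Flat(S,2)$, viewed as a half-translation surface coming from a quadratic differential. For each direction $\theta\in S^1/\{\pm1\}$ the straight-line foliation $F_\theta$ with its transverse measure defines a measured foliation on $S$. The Cauchy--Crofton integral formula on a flat cone surface then says
$$l_m(\gamma)=\tfrac{1}{4}\int_{S^1} i(\gamma,F_\theta)\,d\theta$$
for every rectifiable $\gamma$. Since measured foliations are limits (in the PMF topology) of weighted simple closed multicurves and geometric intersection number extends continuously to $\mathcal{ML}(S)$, the hypothesis $\gamma\equiv_{si}\gamma'$ promotes to $i(\gamma,F)=i(\gamma',F)$ for every measured foliation $F$. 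Integrating over $\theta$ yields $l_m(\gamma)=l_m(\gamma')$.

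For (3), apply Theorem~\ref{thm:Thurston} together with continuity of intersection number in $\mathcal{ML}(S)$ to reduce the problem to showing $i(\gamma,\alpha)=i(\gamma',\alpha)$ for every non-separating $\alpha\in\mathcal{S}(S)$. For such an $\alpha$, I would build a one-parameter family $m_t\in Flat(S,2)$ by cutting $S$ along $\alpha$, equipping the resulting bordered surface with a fixed half-translation structure whose two boundary components are round circles of circumference $1$, and then gluing in a Euclidean cylinder of circumference $1$ and height $t$ whose core represents $\alpha$. Any closed $m_t$-geodesic in the class of $\gamma$ must cross the cylinder at least $i(\gamma,\alpha)$ times, each crossing contributing at least $t$ to the length, while the remaining portion inside the fixed complement is bounded independently of $t$ (since the complementary flat structure is fixed). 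This gives
$$\frac{l_{m_t}(\gamma)}{t}\longrightarrow i(\gamma,\alpha)\quad\text{as }t\to\infty,$$
and the hypothesis $\gamma\equiv_2\gamma'$ forces these limits to agree for $\gamma$ and $\gamma'$.

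The main obstacle is step (3): carrying out the cylinder-insertion construction within $Flat(S,2)$ (that is, ensuring the resulting structure genuinely comes from a quadratic differential, with matching holonomy at the gluing circles) and verifying that the asymptotic ratio isolates exactly $i(\gamma,\alpha)$ rather than some larger combinatorial quantity. The key technical point is that in an NPC cone metric a closed geodesic in the class of $\gamma$ crosses the embedded cylinder the minimum possible number of times, namely $i(\gamma,\alpha)$, so the lower bound $l_{m_t}(\gamma)\geq t\cdot i(\gamma,\alpha)$ is sharp up to an additive constant, and the limit is clean.
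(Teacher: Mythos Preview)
The paper does not give its own proof of this theorem: it is quoted from \cite{clein1} as background and used as a black box. So there is no in-paper argument to compare against. Your three-step outline is correct and is essentially the argument in \cite{clein1}.

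Your stated ``main obstacle'' in step~(3) is a non-issue, and in fact the paper itself carries out exactly this construction later, in the proof of Theorem~\ref{thm:12} (Section~\ref{sec:1and2}). There one glues a $1\times\epsilon$ rectangle by \emph{translations only} so that the core curve is the prescribed non-separating $\alpha$; the resulting metric lies in $Flat(S,1)\subset Flat(S,2)$, so there is no holonomy-matching to worry about and no need for ``round'' boundary circles. The two-sided estimate $i(\alpha,\gamma)\le l_{m^\epsilon_\alpha}(\gamma)\le i(\alpha,\gamma)+n_\gamma\epsilon$ (inequalities~(1) and~(2) in the paper) is precisely your sharp asymptotic, with your $t$ corresponding to $1/\epsilon$ after rescaling; the upper bound comes from an explicit representative, and the lower bound from the fact that each crossing of the cylinder contributes at least its width. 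Then Theorem~\ref{thm:Thurston} handles the separating curves, exactly as you propose. For step~(2), the Crofton-type integral is a clean packaging; the only point requiring care is the continuous extension of $i(\gamma,\cdot)$ from $\mathcal S(S)$ to measured laminations, which is standard. Step~(1) via hyperbolic pinching is also the standard argument.
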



\section{Relations  $\equiv_1$ and $\equiv_2$}
\label{sec:1and2}
We now turn to the proof of 

\begin{thm} \label{thm:12} For every $\gamma,\gamma'\in\mathcal{C}(S)$, $\gamma \equiv_{1} \gamma' \, \, \Leftrightarrow \, \, \gamma\equiv_{2}\gamma'$.\end{thm}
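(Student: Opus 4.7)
The implication $\gamma\equiv_{2}\gamma'\Rightarrow\gamma\equiv_{1}\gamma'$ is immediate from the inclusion $Flat(S,1)\subseteq Flat(S,2)$ observed at the end of Section~\ref{sec:Background}, since $1$ divides $2$. The substance lies in the converse. My plan is to reduce the converse to simple intersection equivalence: by Theorem~\ref{thm:clein}, $\gamma\equiv_{si}\gamma'$ is equivalent to $\gamma\equiv_{2}\gamma'$, so it is enough to prove that $\gamma\equiv_1\gamma'$ implies $i(\alpha,\gamma)=i(\alpha,\gamma')$ for every simple closed curve $\alpha\in\mathcal{S}(S)$. I handle non-separating and separating curves separately.

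For non-separating $\alpha$, I would construct a translation surface structure $(S,\omega)\in Flat(S,1)$ whose horizontal foliation $\mathcal{F}_h$ consists of a single cylinder of width $w$ with core $\alpha$. One-cylinder abelian differentials exist on $S$ with some non-separating simple closed curve as the core, and pulling back by an element of the mapping class group (which preserves $Flat(S,1)$) realizes any prescribed non-separating $\alpha$ as the core. With this setup, $\mathcal{F}_h$ is Whitehead equivalent to the weighted multicurve $w\alpha$, so $i(\mathcal{F}_h,\delta)=w\cdot i(\alpha,\delta)$ for every $\delta\in\mathcal{C}(S)$. I then apply the Teichm\"uller flow $g_t$ sending $\omega$ to $e^{-t/2}\mathrm{Re}(\omega)+ie^{t/2}\mathrm{Im}(\omega)$, producing metrics $m_t\in Flat(S,1)$. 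Sandwiching
$$|\mathrm{Im}(g_t\omega)|\ \le\ |g_t\omega|\ \le\ |\mathrm{Re}(g_t\omega)|+|\mathrm{Im}(g_t\omega)|$$
pointwise and selecting, for each $\delta$, a representative $\delta^*\sim\delta$ realizing the horizontal intersection $\int_{\delta^*}|\mathrm{Im}(\omega)|=w\cdot i(\alpha,\delta)$, would yield
$$e^{-t/2}l_{m_t}(\delta)\ \longrightarrow\ w\cdot i(\alpha,\delta)\quad\text{as }t\to\infty$$
for every $\delta\in\mathcal{C}(S)$. Specializing to $\delta=\gamma,\gamma'$ and using the hypothesis $l_{m_t}(\gamma)=l_{m_t}(\gamma')$ for all $t$ gives $i(\alpha,\gamma)=i(\alpha,\gamma')$.

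For separating $\alpha\in\mathcal{S}(S)$, I would invoke Theorem~\ref{thm:Thurston} to obtain non-separating $\beta_n\in\mathcal{S}(S)$ and scalars $t_n>0$ with $t_ni(\beta_n,\delta)\to i(\alpha,\delta)$ for every $\delta\in\mathcal{C}(S)$. Applied to $\delta=\gamma$ and $\delta=\gamma'$ and combined with the equalities $i(\beta_n,\gamma)=i(\beta_n,\gamma')$ from the non-separating case, passing to the limit gives $i(\alpha,\gamma)=i(\alpha,\gamma')$. Together with the non-separating case, this establishes $\gamma\equiv_{si}\gamma'$, and Theorem~\ref{thm:clein} then concludes $\gamma\equiv_2\gamma'$.

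The main obstacle is the non-separating step: producing one-cylinder abelian differentials with any prescribed non-separating core, and verifying the upper bound in the limit displayed above. Because $l_{m_t}(\delta)$ is an infimum over representatives rather than the integral along a fixed curve, the delicate point is selecting a representative $\delta^*$ of $\delta$ whose $|\mathrm{Re}(\omega)|$-length is bounded independently of $t$, so that the error term in the upper bound is $O(e^{-t/2})$ and the limit picks out precisely $w\cdot i(\alpha,\delta)$ without spurious contributions from the vertical foliation or saddle connections.
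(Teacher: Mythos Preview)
Your proposal is correct and follows essentially the same route as the paper. Both reduce the converse to $\equiv_{si}$ via Theorem~\ref{thm:clein}, handle separating curves by Theorem~\ref{thm:Thurston}, and for non-separating $\alpha$ extract $i(\alpha,\cdot)$ as a limit of $Flat(S,1)$--lengths on a one-cylinder translation surface with core $\alpha$. The only difference is packaging: the paper builds the one-cylinder surfaces by hand (an explicit $1\times\epsilon$ rectangle $Y_g^\epsilon$ with side identifications by translations, Figure~\ref{fig:pravougaonik5}) and lets $\epsilon\to 0$, whereas you invoke abstract existence plus the mapping class group and degenerate via Teichm\"uller flow---but $\epsilon\to 0$ on the rectangle and $t\to\infty$ under $g_t$ (with the $e^{-t/2}$ normalization) are the same limit. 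The paper's concrete picture also dispatches your stated obstacle directly: the representative $\bar\gamma$ in Figure~\ref{fig:pravougaonik2}, consisting of horizontal crossings plus short vertical runs along the $\epsilon$--edges, is exactly your $\delta^*$ with $|\mathrm{Re}(\omega)|$--length bounded independently of the deformation parameter.
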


\begin{proof}[Proof:] 

Given $\gamma$, $\gamma'\in \mathcal{C}(S)$, we have  $$\gamma\equiv_{si} \gamma' \, \,  \Rightarrow \, \,  \gamma\equiv_{2}\gamma' \, \,  \Rightarrow \, \,   \gamma\equiv_{1}\gamma'$$ by Theorem~\ref{thm:clein} and the fact $Flat (S,1) \subseteq Flat (S, 2).$ We want to prove $$\gamma \equiv_{1} \gamma '\, \, \Rightarrow \, \, \gamma \equiv_{si}\gamma '.$$

We claim that if $\gamma \equiv_1 \gamma'$, then $i(\alpha, \gamma)=i(\alpha,\gamma')$ for every non-separating curve $\alpha$. Then if $\beta$ is a separating curve, by Theorem~\ref{thm:Thurston} there is a sequence of non-separating curves $\beta_n$ and positive real numbers $t_n$ such that $t_n i(\gamma,\beta_n) \to i(\gamma,\beta)$  and $t_n i(\gamma',\beta_n) \to i(\gamma',\beta)$. Since $t_n i(\beta_n, \gamma)= t_n i(\beta_n, \gamma')$ by the claim, we will have $i(\beta, \gamma)=i(\beta,\gamma')$ for every separating simple closed curve $\beta$, hence every $\beta\in\mathcal{S}(S)$, and thus $\gamma \equiv_{si} \gamma'$.\\

For any $g \geq 2$, we can construct a closed genus g surface  by gluing the arcs in the boundary of a cylinder so that in the resulting surface the core curve is non-separating. Moreover, this construction can be carried out on a Euclidean cylinder so that the resulting Euclidean cone metric has trivial holonomy. See Figure~\ref{fig:pravougaonik5}.\\

\begin{figure}[htb]
\centering
\includegraphics[height=1.25in]{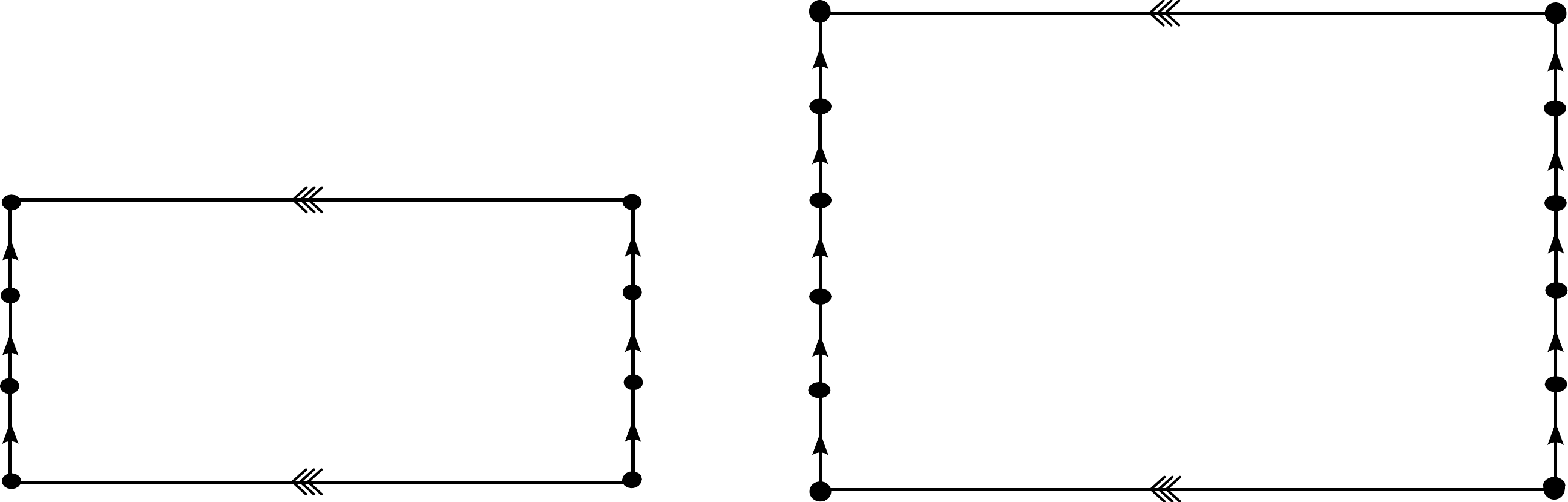}
\caption{Examples of genus 2 and 3 surfaces. Gluing top sides to the bottom sides results in a cylinder, the rest of the gluing produces the closed surfaces.}
\label{fig:pravougaonik5}
\begin{picture}(0,0)(0,0)
 	 \put(-150,94){$1$}
     \put(-150,75){$2$}
      \put(-150,57){$3$}
   
     \put(-23,94){$3$}
     \put(-23,75){$2$}
      \put(-23,57){$1$}
      
      \put(-3,125){$1$}
       \put(-3,108){$2$}
       \put(-3,93){$3$}
     \put(-3,75){$4$}
      \put(-3,57){$5$}
      
       \put(144,125){$5$}
       \put(144,108){$4$}
       \put(144,93){$3$}
     \put(144,75){$2$}
      \put(144,57){$1$}
     
   \end{picture}
\end{figure}

Suppose $S$ has genus $g$ and let $\alpha$ be a non-separating curve on $S$. Let $X_g^\epsilon$ be the surface of genus $g$ obtained by gluing a rectangle $Y_g^\epsilon$ as in Figure~\ref{fig:pravougaonik5} with horizontal side length $1$ and vertical side length $\epsilon >0$. Let $\alpha_g$ be the core nonseparating curve of the cylinder obtained by gluing only the horizontal sides of $Y_g^\epsilon$. See Figure~\ref{fig:pravougaonik4}. We assume that the obvious affine map from $Y_g^1$ to $Y_g^\epsilon$ descends to a homeomorphism $f_\epsilon:X_g^1 \to X_g^\epsilon$ for all $\epsilon > 0$. Choose a homeomorphism  $f:S \rightarrow  X_g^1$ so that $\alpha$ is sent to $\alpha_g$ and let 
 $h_\epsilon=f_\epsilon \circ f:S \rightarrow X_g^\epsilon$. Write $m_\alpha^\epsilon$ to denote the metric obtained by pulling back the Euclidean metric on $X_g^\epsilon$ via $h_{\epsilon}$. Since the edges of $Y_g^\epsilon$ are glued only by translations we have $m_\alpha^\epsilon \in Flat(S,1)$.\\

\begin{figure}[htb]
\centering
\includegraphics[height=1.25in]{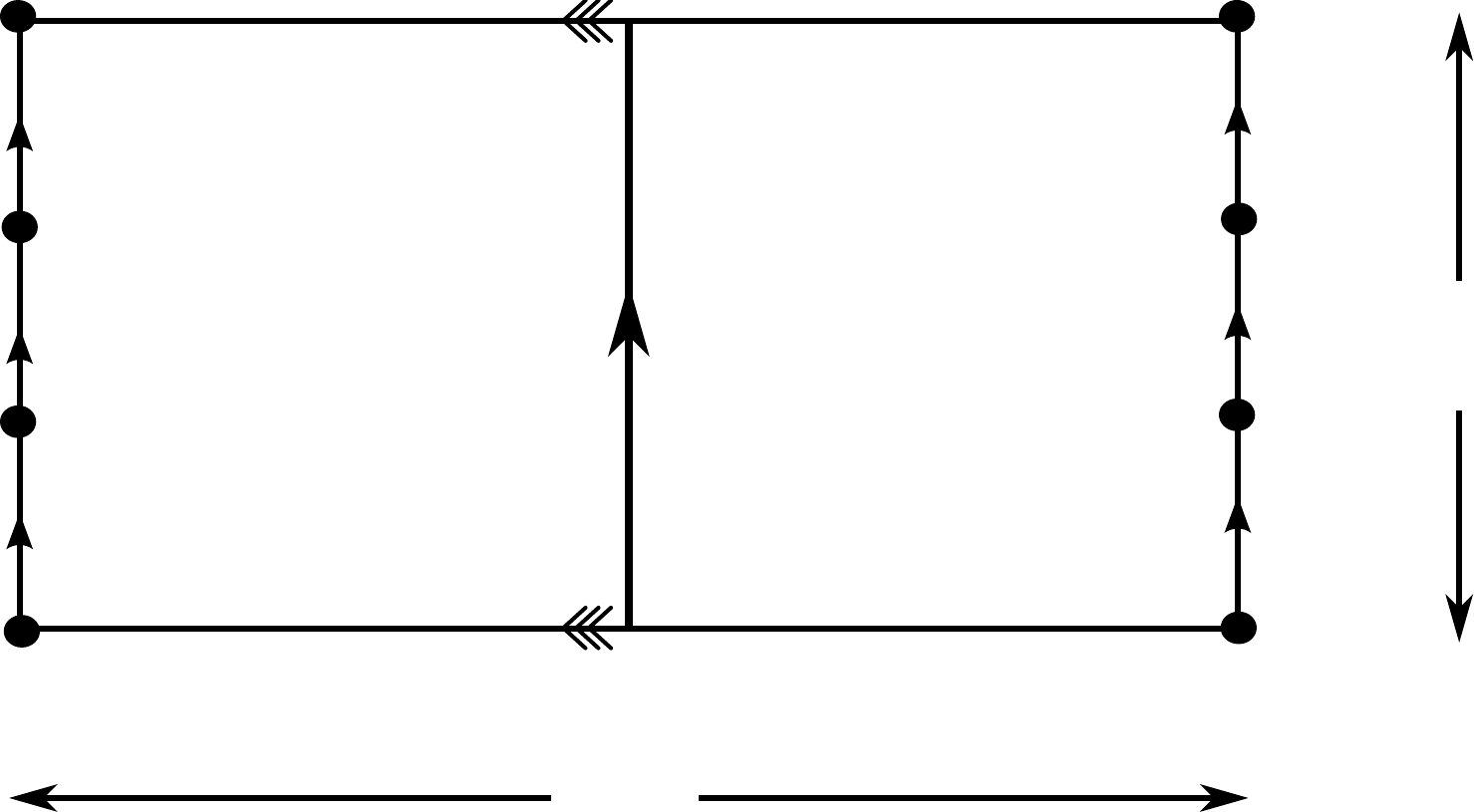}
\caption{The surface $X_2^{\epsilon}$ obtained by gluings from the rectangle  $Y_2^\epsilon$, and curve $\alpha_2$.}
\label{fig:pravougaonik4}
\begin{picture}(0,0)(0,0)
 	 \put(-7,88){$\alpha_2$}
     \put(-14,35){$1$}
     \put(79,86){$\epsilon$}
     
      \put(-90,110){$1$}
     \put(-90,88){$2$}
      \put(-90,65){$3$}
   
     \put(60,110){$3$}
     \put(60,88){$2$}
      \put(60,65){$1$}
     \end{picture}
\end{figure}

Let $\gamma$ be a curve on $S$. The geodesic representative of $\gamma$ in $m_\alpha^\epsilon$ is sent by $h_\epsilon $ to a closed geodesic on $X_g^\epsilon$. On the cylinder obtained by gluing the horizontal sides of $Y_g^\epsilon$ this is a union of straight lines running from one boundary component of the cylinder to another and along the boundary components.  See Figure~\ref{fig:pravougaonik1}. \\

\begin{figure}[htb]
\centering
\includegraphics[height=1in]{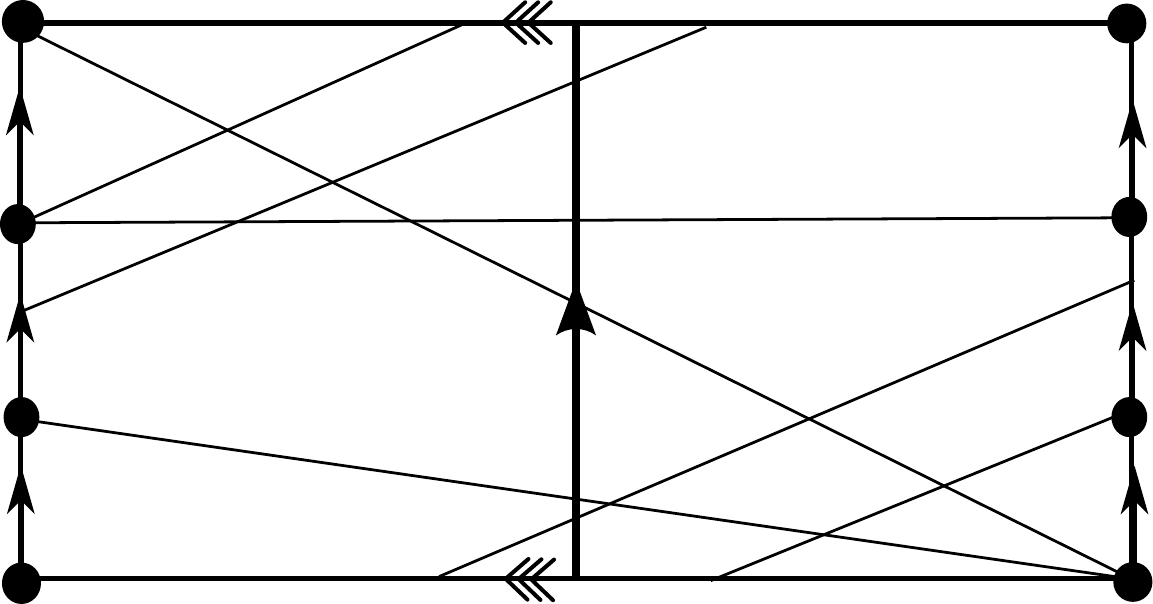}
\caption{ The geodesic representative of $\gamma$ in $m_\alpha^\epsilon$. See Figure~\ref{fig:pravougaonik4} for the gluings.}
\label{fig:pravougaonik1}
\end{figure}

Thus we have \begin{equation} \label{eqn:1} l_{m_\alpha^\epsilon}(\gamma)\geq i(\alpha, \gamma) \cdot 1 \tag{1} \end{equation} since each geodesic segment that crosses the cylinder contributes $1$ to intersection number and at least $1$ to the length.\\

The curve $\gamma$ is homotopic to a curve $\bar{\gamma}$ which is sent by $h_\epsilon$ to a union of straight line segments parallel to the side of length $1$ of the rectangle $Y_g^\epsilon$ and some segments of the vertical $\epsilon$-length sides as in Figure~\ref{fig:pravougaonik2}. \\

\begin{figure}[htb]
\centering
\includegraphics[height=1in]{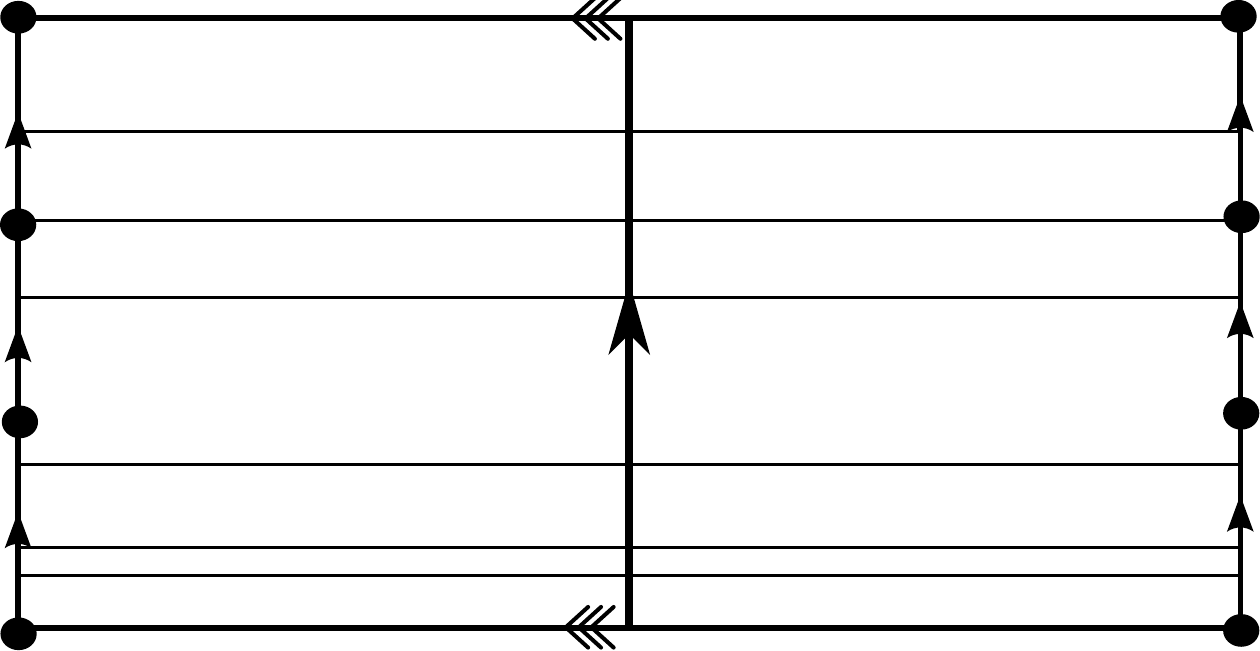}
\caption{The representative $\bar\gamma$ in $m_\alpha^\epsilon$. See Figure~\ref{fig:pravougaonik4} for the gluings.}
\label{fig:pravougaonik2}
\end{figure}

From this we get \begin{equation}\label{eqn:2} \l_{m_\alpha^\epsilon}(\gamma)\leq length(\bar{\gamma})\leq i(\alpha, \gamma)\cdot 1 + n_\gamma \epsilon \tag{2}\end{equation}  where $n_\gamma $ is the number of
vertical segments of $\bar{\gamma}$.\\

Now suppose $\gamma \equiv_{1} \gamma'$. Then for every $\epsilon > 0$ we have $l_{m_\alpha^\epsilon}(\gamma) = l_{m_\alpha^\epsilon}(\gamma')$. By~\eqref{eqn:1} and~\eqref{eqn:2}  $$ l_{m_\alpha^\epsilon}(\gamma)- n_\gamma \epsilon \leq i(\alpha, \gamma) \leq l_{m_\alpha^\epsilon}(\gamma)$$ and $$ l_{m_\alpha^\epsilon}(\gamma')- n_{\gamma'} \epsilon \leq i(\alpha, \gamma') \leq l_{m_\alpha^\epsilon}(\gamma').$$ Letting $\epsilon \to 0$ we get $i(\alpha, \gamma)=i(\alpha, \gamma')$, proving the claim. \\

Hence $\gamma \equiv_{si} \gamma'$, completing the proof. 
\end{proof}

\begin{cor} For every $q\in\mathbb{Z}_+$ and every $\gamma, \gamma'\in \mathcal{C}(S)$, \,  $\gamma \equiv_q \gamma' \, \, \Rightarrow \, \, \gamma \equiv_2 \gamma'$.\end{cor}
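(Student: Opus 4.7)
The plan is extremely short: observe that $Flat(S,1)\subseteq Flat(S,q)$ for every $q\in\mathbb{Z}_+$, and then chain with Theorem~\ref{thm:12}.

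First, since $1\mid q$ for every positive integer $q$, the divisibility criterion noted in Section~\ref{sec:Background} (namely that $q_1\mid q_2$ iff $Flat(S,q_1)\subseteq Flat(S,q_2)$) immediately gives the inclusion $Flat(S,1)\subseteq Flat(S,q)$. Consequently, if $\gamma\equiv_q\gamma'$, meaning $l_m(\gamma)=l_m(\gamma')$ for every $m\in Flat(S,q)$, then in particular $l_m(\gamma)=l_m(\gamma')$ for every $m\in Flat(S,1)$, i.e., $\gamma\equiv_1\gamma'$.

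Second, Theorem~\ref{thm:12} provides the equivalence $\gamma\equiv_1\gamma'\Leftrightarrow \gamma\equiv_2\gamma'$. Combining this with the implication from the first step yields $\gamma\equiv_q\gamma'\Rightarrow \gamma\equiv_2\gamma'$, which is exactly the claim. There is no real obstacle: both ingredients are already in hand, so the proof is a two-line deduction.
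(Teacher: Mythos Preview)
Your proof is correct and follows exactly the same route as the paper: use $Flat(S,1)\subseteq Flat(S,q)$ to deduce $\equiv_q\Rightarrow\equiv_1$, then invoke Theorem~\ref{thm:12} to get $\equiv_1\Rightarrow\equiv_2$. The only difference is that you spell out the divisibility justification for the inclusion, which the paper leaves implicit.
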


\begin{proof}[Proof:] Since $Flat(S,1) \subset Flat(S,q)$ for every $q$, we have $$\gamma \equiv_q \gamma' \, \Rightarrow \,\gamma \equiv_1\gamma' \, \Rightarrow \gamma \equiv_2 \gamma'.$$ 
\end{proof}


\section{The equivalence relation $\equiv_\infty$ }
\label{sec:infinity}

Next we will prove

\begin{thm}\label{thm:infinity} The equivalence relation $\equiv_{\infty}$ is trivial.\end{thm}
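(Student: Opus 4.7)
The plan has three parts: a density lemma reducing $\equiv_\infty$ to equality in all of $Flat(S)$, a direct consequence, and a separation step.

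First, I would prove the following density lemma: for every $m \in Flat(S)$ there is a sequence $\{m_n\}$ with $m_n \in Flat(S, q_n)$, $q_n \to \infty$, and $l_{m_n}(\gamma) \to l_m(\gamma)$ for every $\gamma \in \mathcal{C}(S)$. Using the generalized Euclidean polygon description from Section~\ref{sec:Background}, the metric $m$ is obtained by gluing a polygon $P \subset \mathbb{C}$ along edge pairs by compositions $\{\rho_i \circ \tau_i\}_{i=1}^{k}$ of rotations and translations, so that $Hol(m) = \langle \rho_1, \ldots, \rho_k \rangle$. Writing $\rho_i$ as rotation by $\theta_i$, I would pick for each large $n$ and each $i$ an integer $k_{i,n}$ with $|\theta_i - 2\pi k_{i,n}/q_n| \leq \pi/q_n$, and replace $\rho_i$ by the rotation $\widetilde\rho_{i,n}$ of angle $2\pi k_{i,n}/q_n$. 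Simultaneously, I would perturb $P$ to a nearby polygon $P_n$ so that under the modified gluings $\widetilde\rho_{i,n}\circ \widetilde\tau_{i,n}$ the edges still match up to produce a closed surface homeomorphic to $S$. The existence of such $P_n$ for $n$ large follows from continuity, since the matching conditions on edge data form a linear system depending continuously on the angles $\theta_i$, with $P$ as a solution for the original angles. Since lengths of closed geodesics depend continuously on the polygon data, the resulting $m_n \in Flat(S,q_n)$ satisfies $l_{m_n}(\gamma) \to l_m(\gamma)$.

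Granted the density lemma, if $\gamma \equiv_\infty \gamma'$ then $l_{m_n}(\gamma) = l_{m_n}(\gamma')$ for every $n$, so passing to the limit yields $l_m(\gamma) = l_m(\gamma')$ for every $m \in Flat(S)$. Moreover, the Corollary to Theorem~\ref{thm:12} combined with Theorem~\ref{thm:clein} already gives $\gamma \equiv_{si} \gamma'$, hence $i(\alpha,\gamma) = i(\alpha,\gamma')$ for every simple closed curve $\alpha$. To conclude that $\gamma = \gamma'$, I would upgrade the cylinder construction of Theorem~\ref{thm:12} from non-separating simple curves to arbitrary $\alpha \in \mathcal{C}(S)$: for each such $\alpha$, build metrics $m^\epsilon \in Flat(S)$ (not necessarily in any $Flat(S,q)$) in which a neighborhood of a representative of $\alpha$ is stretched so that $l_{m^\epsilon}(\gamma) = i(\alpha,\gamma) + O(\epsilon)$. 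For separating simple $\alpha$, Theorem~\ref{thm:Thurston} reduces to the non-separating case; for non-simple $\alpha$, pass to a finite regular cover of $S$ in which $\alpha$ lifts to a simple curve, apply the earlier construction there, and descend to $S$. The resulting equalities $i(\alpha,\gamma) = i(\alpha,\gamma')$ for all $\alpha \in \mathcal{C}(S)$ then force $\gamma = \gamma'$.

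The main obstacle is expected to be the perturbation argument in the density lemma: although the edge-matching equations depend continuously on the $\theta_i$, one must verify that $P$ is a nondegenerate solution (so $P_n$ exists for $n$ large), and check that the cone-angle conditions $c(x) \geq 2\pi$ persist under perturbation so that $m_n$ remains NPC. The separation step is also delicate in the non-simple curve case, but the cover argument combined with Theorem~\ref{thm:Thurston} should handle it. Finally, the same proof will give Corollary~\ref{cor:infty2}: any infinite sequence $\{q_i\}$ of distinct positive integers has a subsequence tending to infinity, and the density argument only requires that $q_n$ may be chosen arbitrarily large, which is exactly this.
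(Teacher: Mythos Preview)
Your density lemma matches the paper's Theorem~\ref{thm:polygon}, and the reduction of $\equiv_\infty$ to the statement ``$l_m(\gamma)=l_m(\gamma')$ for all $m\in Flat(S)$'' is the same. The paper carries out the polygon perturbation explicitly rather than by an implicit-function argument: it perturbs the edge vectors $z_i$ one at a time to have arguments in $\mathbb{Q}\pi$, using the last two pairs of edges to absorb the closing condition $\sum \widetilde z_i=0$ while keeping $|\widetilde z_{2i-1}|=|\widetilde z_{2i}|$. This bypasses the nondegeneracy issue you flag. Your NPC worry is harmless: cone angles of $m\in Flat(S)$ at genuine cone points are strictly greater than $2\pi$, so small perturbations preserve the condition.

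Your separation step, however, diverges from the paper and contains a genuine gap. The paper does not pass through intersection numbers with non-simple curves; it proves directly (Theorem~\ref{thm:R}) that distinct $\gamma,\gamma'$ are separated by some $m\in Flat(S)$, by deforming a hyperbolic metric via a bump function to a negatively curved Riemannian metric with $l(\gamma)\neq l(\gamma')$, taking a geodesic triangulation containing $\gamma$ and $\gamma'$ in its $1$--skeleton, and replacing each triangle by the Euclidean triangle with the same side lengths; CAT(0) comparison keeps the result NPC and leaves $\gamma,\gamma'$ geodesic with unchanged lengths. Your cover argument fails as stated: a cylinder metric on a finite cover $\widetilde S$ built around a simple lift $\widetilde\alpha$ of $\alpha$ is not deck-group invariant and hence does not descend to a metric in $Flat(S)$; conversely, knowing $l_m(\gamma)=l_m(\gamma')$ only for metrics $m$ pulled back from $S$ tells you nothing about that cylinder metric on $\widetilde S$. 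So you cannot deduce $i(\alpha,\gamma)=i(\alpha,\gamma')$ for non-simple $\alpha$ this way. Even granting that equality, the final implication ``$i(\alpha,\gamma)=i(\alpha,\gamma')$ for all $\alpha\in\mathcal{C}(S)$ forces $\gamma=\gamma'$'' is itself a nontrivial fact (essentially injectivity of the intersection pairing on geodesic currents) that you would need to justify separately.
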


To prove this we first prove a weaker statement. Define another equivalence relation on $\mathcal{C}(S)$ by declaring  $\gamma \equiv_{\mathbb{R}} \gamma'$ if and only if $l_m(\gamma)=l_m(\gamma')$ for every $m\in Flat(S)$.

\begin{thm} \label{thm:R} The equivalence relation $\equiv_{\mathbb{R}}$ is trivial.\end{thm}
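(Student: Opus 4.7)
Since $Flat(S,1) \subseteq Flat(S)$, the hypothesis $\gamma \equiv_\mathbb{R} \gamma'$ immediately implies $\gamma \equiv_1 \gamma'$, so Theorem~\ref{thm:12} gives $\gamma \equiv_{si} \gamma'$. Hence any $\equiv_\mathbb{R}$-equivalent pair already shares all geometric intersection numbers with simple closed curves. This is not enough to force $\gamma = \gamma'$, however, since Horowitz--Randol pairs show that $\equiv_{si}$ is nontrivial. The proof must therefore genuinely exploit flat metrics in $Flat(S) \setminus \bigcup_{q} Flat(S,q)$, whose holonomy is not contained in any $\langle \rho_{2\pi/q} \rangle$. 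A first reduction is to the primitive case: if $\gamma = \eta^k$ and $\gamma' = \mu^l$ with $\eta, \mu$ primitive, then $l_m(\gamma) = k\, l_m(\eta)$ and $l_m(\gamma') = l\, l_m(\mu)$, and comparing values at two distinct metrics forces $k = l$ and $\eta \equiv_\mathbb{R} \mu$, so it suffices to prove the theorem when both $\gamma$ and $\gamma'$ are primitive.

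The plan for distinct primitives is to generalize the single-cylinder construction of Theorem~\ref{thm:12} to a \emph{fat graph} construction. Choose an embedded graph $G \subset S$ that is the 1-skeleton of a cell decomposition, so $S \setminus G$ is a disjoint union of open disks; then every $\gamma \in \mathcal{C}(S)$ admits a combinatorial representative given by a cyclic edge-word $w_G(\gamma)$. I would build a family $\{m^\epsilon_G\}_{\epsilon > 0} \subset Flat(S)$ of metrics that thicken $G$: each edge becomes a long thin Euclidean rectangle of width $\epsilon$, each vertex becomes a small polygon carrying the Gauss--Bonnet curvature as cone-angle excess, and each complementary disk is filled with a flat disk. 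As $\epsilon \to 0$ one expects $l_{m^\epsilon_G}(\gamma) \to L_G(w_G(\gamma))$, the weighted combinatorial length of $w_G(\gamma)$ measured by edge lengths of $G$. Since $\gamma \ne \gamma'$ are distinct primitives, for a sufficiently rich $G$ (for instance, the 1-skeleton of a fine triangulation) the cyclic words $w_G(\gamma)$ and $w_G(\gamma')$ must differ; varying the edge lengths one can then arrange $L_G(w_G(\gamma)) \ne L_G(w_G(\gamma'))$, yielding $l_{m^\epsilon_G}(\gamma) \ne l_{m^\epsilon_G}(\gamma')$ for small $\epsilon$ and contradicting $\gamma \equiv_\mathbb{R} \gamma'$.

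The main obstacles are twofold. First, constructing $m^\epsilon_G$ inside $Flat(S)$ requires arranging that every cone angle (at vertices of $G$ and at any auxiliary cone points used to balance Gauss--Bonnet) is at least $2\pi$; the combinatorial bookkeeping here is nontrivial but tractable once one fixes vertex polygons with appropriate angle sums. Second, verifying the limit $l_{m^\epsilon_G}(\gamma) \to L_G(w_G(\gamma))$ requires showing that the $m^\epsilon_G$-geodesic representative of $\gamma$, for small $\epsilon$, genuinely follows the edges of $G$: it cannot take shortcuts across the complementary disks (ensured by choosing the disks small relative to $\epsilon$) and cannot fail to traverse the vertex polygons dictated by its homotopy class. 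Once these two technical points are established, the contradiction described above completes the proof.
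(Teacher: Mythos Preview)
Your strategy has a fatal gap at the step ``varying the edge lengths one can then arrange $L_G(w_G(\gamma)) \ne L_G(w_G(\gamma'))$.'' The quantity $L_G$ of a tightened cyclic edge--word depends only on the vector of edge--traversal multiplicities, not on the word itself. It is a classical fact (and is exactly the graph analogue of the Horowitz--Randol phenomenon; see the reference \cite{kapovich} cited in the introduction) that there exist distinct conjugacy classes in a free group which traverse each edge the same number of times and hence have equal length in \emph{every} metric on the graph. Refining $G$ does not help: any two such curves remain indistinguishable by combinatorial length on any $1$--skeleton. Thus your fat--graph limit $l_{m^\epsilon_G}(\gamma) \to L_G(w_G(\gamma))$, even if established, lands you in a length functional that is already known not to separate curves. (There is also a smaller issue: your reduction to primitives only yields a constant ratio $l_m(\eta)/l_m(\mu) = l/k$, not $k=l$, so that step needs more.)

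The paper's proof avoids this entirely by going through negatively curved Riemannian metrics rather than graph metrics. One first finds a negatively curved metric $m'$ with $l_{m'}(\gamma) \ne l_{m'}(\gamma')$ (a local bump--function perturbation of a hyperbolic metric near a point of $\gamma \setminus \gamma'$ suffices). Then one takes a geodesic triangulation of $(S,m')$ containing $\gamma$ and $\gamma'$ in its $1$--skeleton and replaces each triangle by the Euclidean triangle with the same side lengths. The CAT($0$) comparison inequality forces all cone angles to be at least $2\pi$, so the resulting metric $m$ lies in $Flat(S)$, and $\gamma,\gamma'$ remain geodesics with unchanged lengths. This converts the Riemannian distinguishing metric into a flat one directly, without any limiting procedure and without ever reducing to a $1$--dimensional length functional.
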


\begin{proof}[Proof:] To prove this theorem we will show that for every $\gamma, \gamma' \in \mathcal{C}(S)$, $\gamma \not= \gamma'$, there is $m \in Flat (S)$ so that $l_m(\gamma) \neq l_m(\gamma')$.\\

Let $\gamma$ and $\gamma'$ be distinct curves in $\mathcal{C}(S)$. Pick a hyperbolic Riemannian metric $g$ on $S$. If the geodesic representatives of $\gamma$ and $\gamma'$ have the same length in $g$, let $\epsilon>0$ be small enough so that $\gamma'$ does not intersect $\epsilon$-ball around a point $x \in \gamma$. Let $\varphi:S \rightarrow \mathbb{R}$ be a smooth function so that $\varphi \equiv 0$ on $S\setminus B_{\epsilon}(x)$, $\varphi (x)=1$ and $\varphi (S)=[0, 1]$. For $\delta>0$ consider the Riemannian metric $g_{\delta}=g(1-\delta\varphi)$. The metric $g_{\delta}$ is negatively curved for $\delta$ sufficiently small and  the $g_\delta$-length of $\gamma$ is smaller than the $g$-length, while the length of $\gamma'$ is not changed. Thus, for any two curves $\gamma$ and $\gamma'\in \mathcal{C}(S)$ there is a negatively curved Riemannian metric $m'$ such that $l_{m'}(\gamma) \neq l_{m'}(\gamma')$. In fact there are negatively curved metrics where there are no closed curves with the same length (see \cite{randol}, \cite{abraham}, \cite{anosov} for more general result).\\

Being a locally $CAT(k)$ space, $k<0$, implies also being locally $CAT(0)$. Thus $m'$ is a locally $CAT(0)$ metric. Pick a geodesic triangulation of the surface $S$ with metric $m'$ so that each triangle belongs to a $CAT(0)$ neighborhood on $S$ and so that the $m'$-geodesic representatives of $\gamma$ and $\gamma'$ are unions of edges of triangles.  Now build a Euclidean cone metric $m$ on $S$ by replacing the triangles, whose edges are $m'$-geodesics, with Euclidean triangles with the same length sides. By the $CAT(0)$ property the angles of the Euclidean triangles are larger than the angles in the triangles they replaced. The vertices of the triangles are the only possible cone points. There are finitely many of them, and their angles are therefore $\geq 2\pi$. Hence $m \in Flat(S)$. \\

\begin{figure}[htb]
\centering
\includegraphics[height=0.9in]{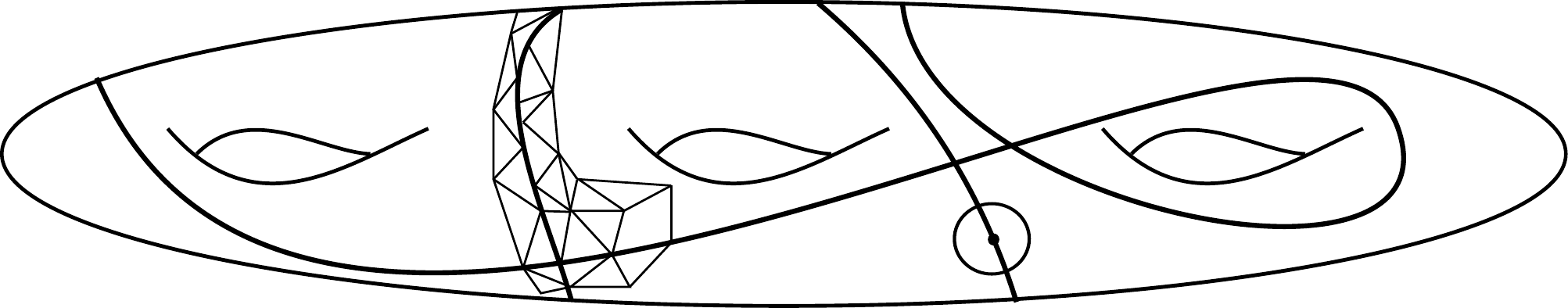}
\caption{Part of a triangulation of a genus 3 surface with curves $\gamma$ and $\gamma'$ as unions of edges of the triangles.}
\label{fig:triangulation}
\begin{picture}(0,-12)(0,-12)
 	 \put(52,45){$B_{\epsilon}(x)$}
     \put(10,86){$\gamma$}
      \put(-68,86){$\gamma$}
     \put(65,82){$\gamma'$}
     \end{picture}
\end{figure}

The curves $\gamma$ and $\gamma'$ in the new metric become concatinations of Euclidean segments (sides of triangles). Since the sum of angles of a cone point on one side of the curve in the metric $m$ is  greater than or equal to the sum of the angles in the metric $m'$ we get that the angle around every cone point on each side of $\gamma$ and $\gamma'$ is $\geq \pi$. Therefore $\gamma$ and $\gamma'$ are also geodesics in the metric $m$.\\

\begin{figure}[htb]
\centering
\includegraphics[height=0.9in]{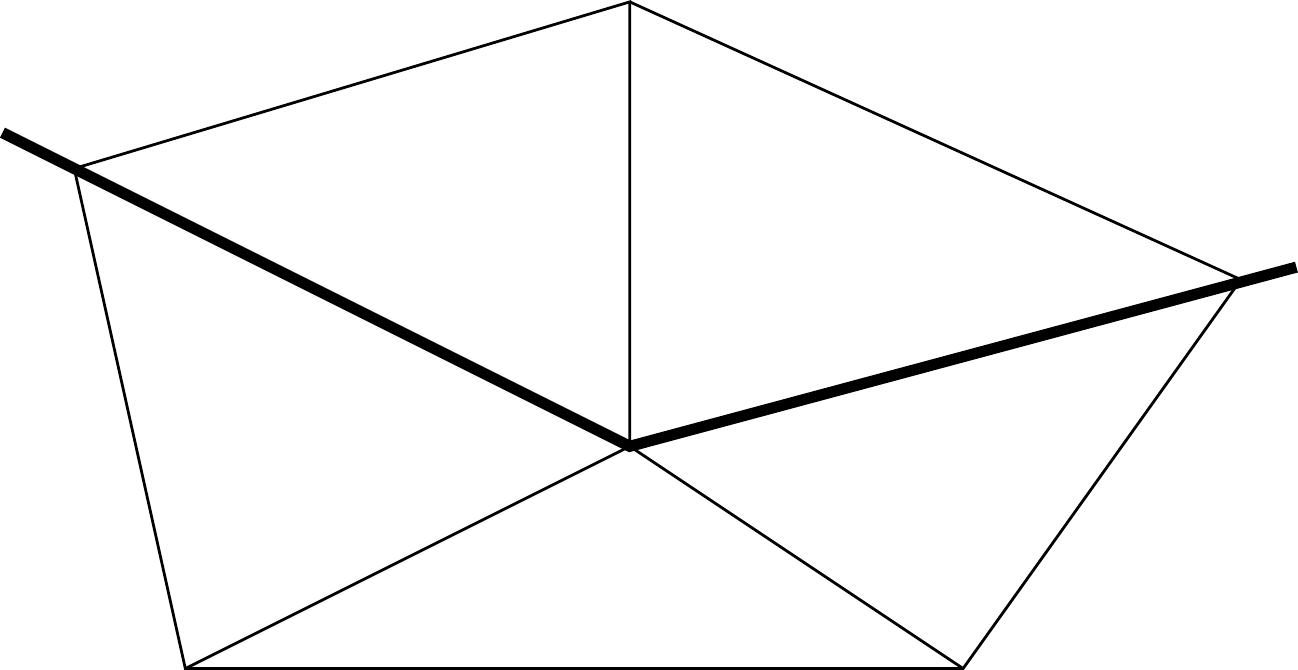}
\caption{Euclidean triangles and a geodesic segment containing edges of the triangles.}
\label{fig:triangulation2}
\begin{picture}(0,0)(0,0)
 	 \put(-15,67){$\theta_1$}
     \put(0,67){$\theta_2$}
      \put(-23,55){$\theta_3$}
     \put(-8,46){$\theta_4$}
     \put(7,52){$\theta_5$}
     
     \put(65,80){$\theta_1+\theta_2\geq \theta_1'+\theta_2'=\pi  $}
     \put(65,60){$\theta_3+\theta_4+\theta_5\geq \theta_3'+\theta_4' +\theta_5'=\pi  $}
     \end{picture}
\end{figure}

We have
$$l_{m}(\gamma) = l_{m'}(\gamma) \neq l_{m'}(\gamma')=l_{m}(\gamma'),$$ and we proved our theorem.
\end{proof}

\begin{thm} \label{thm:polygon} $\displaystyle\overline{\bigcup _{q \in \mathbb{Z}_+}Flat(S,q)}=Flat(S)$. More precisely, for every $m\in Flat(S)$, there exist a sequence of metrics $\displaystyle m_n \in \bigcup _{q \in \mathbb{Z}_+}Flat(S,q)$, so that $id:(S, m_n) \rightarrow (S, m)$ is $K_n$-bilipschitz and $K_n \rightarrow 1$ as $n\rightarrow \infty$. \end{thm}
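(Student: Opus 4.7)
The plan is to approximate $m$ by perturbing a generalized Euclidean polygon representation of $(S,m)$, using simultaneous Diophantine approximation to make the rotational gluings rational multiples of $2\pi$.

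Following Section~\ref{sec:Background}, I would first triangulate $(S,m)$ with vertex set equal to the set $X$ of cone points, pick a maximal tree in the dual graph, and cut along the edges dual to the non-tree edges to obtain a generalized Euclidean polygon $P\subset\mathbb{C}$ whose $k=V+2g-1$ pairs of boundary sides are identified by maps $\{\rho_i\circ\tau_i\}_{i=1}^k$ (here $V=|X|$). Writing $\theta_i$ for the rotation angle of $\rho_i$, we have $Hol(m)=\langle\rho_{\theta_1},\ldots,\rho_{\theta_k}\rangle$. By Dirichlet's theorem on simultaneous Diophantine approximation, for every $n$ I can choose $q_n\in\mathbb{Z}_+$ (with $q_n\to\infty$) and integers $p_i^{(n)}$ so that $\theta_i^{(n)}:=2\pi p_i^{(n)}/q_n$ satisfies $|\theta_i^{(n)}-\theta_i|<1/n$ for every $i$. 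The approximating metrics $m_n$ will arise from perturbed polygons $P_n$, combinatorially identical to $P$, whose boundary pairings use $\rho_{\theta_i^{(n)}}$ in place of $\rho_{\theta_i}$.

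The core step is producing $P_n$. Fixing the combinatorial triangulation, the space of Euclidean cone metrics on $S$ is parametrized by the positive edge lengths $(\ell_e)_{e\in E}$ subject to triangle inequalities; with the maximal tree fixed, each triangle is placed uniquely in $\mathbb{C}$ and the $k$ rotation angles $\theta_i$ become smooth functions of $(\ell_e)$. Since $E=3(V+2g-2)$ dominates $k=V+2g-1$, a dimension count suggests that the holonomy map $\Phi\colon(\ell_e)\mapsto(\theta_1,\ldots,\theta_k)$ is a submersion onto its image at generic points; if this holds at $m$ (or at a small prior perturbation of $m$ inside $Flat(S)$), the implicit function theorem yields edge lengths $(\ell_e^{(n)})$ close to $(\ell_e)$ with $\Phi((\ell_e^{(n)}))=(\theta_i^{(n)})$, and $P_n$ is the resulting polygon. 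An alternative in case of degeneracy is to write the compatibility conditions $\rho_{\theta_i^{(n)}}(v_{e'}^{(n)})=-v_e^{(n)}$ directly as a linear system in the vertex positions of $P_n$ and exploit the large number of vertex-position degrees of freedom to solve it near the given $P$.

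The metric $m_n$ obtained by gluing $P_n$ then satisfies $Hol(m_n)\leq\langle\rho_{2\pi/q_n}\rangle$ by construction. Its cone angles, being sums of Euclidean triangle angles, depend continuously on the edge lengths, so they converge to those of $m$ and remain $\geq 2\pi$ for $n$ large, giving $m_n\in Flat(S,q_n)$. The identity $(S,m_n)\to(S,m)$ is piecewise affine on the triangulation, and each piece is the affine map between two Euclidean triangles with nearly equal edge lengths, so its bilipschitz constant tends to $1$, yielding $K_n\to 1$. The main obstacle will be justifying this perturbation step: one must rigorously show that $(\theta_i)$ can be nudged to nearby prescribed values via edge-length perturbation, either by a generic-submersion argument for $\Phi$ or by an explicit linear-algebraic construction using the compatibility relations above.
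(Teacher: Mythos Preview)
Your setup matches the paper's: triangulate, cut to a generalized polygon $P$, and approximate.  The gap is precisely the step you flag yourself.  You first \emph{prescribe} rational rotation angles $\theta_i^{(n)}$ for the gluings and then try to realize them by perturbing edge lengths, relying on the holonomy map $\Phi\colon(\ell_e)\mapsto(\theta_1,\ldots,\theta_k)$ being a submersion.  The dimension count $3(V+2g-2)\geq V+2g-1$ is only suggestive; you give no argument that $d\Phi$ actually has full rank at $m$, and your fallback (``a small prior perturbation of $m$ inside $Flat(S)$'') begs the question, since you have not shown submersion is generic either.  The alternative linear system you sketch is still an inverse problem with the rotation angles fixed in advance, and you do not verify solvability.

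The paper sidesteps this entirely by working forward rather than backward.  Instead of prescribing the \emph{gluing rotations}, it perturbs the \emph{edge vectors} $z_1,\ldots,z_{2n}$ of $P$ directly so that every $\arg(\widetilde z_j)\in\mathbb{Q}\pi$.  Since paired edges $z_{2i-1},z_{2i}$ are glued by an isometry, once both arguments are rational the gluing rotation is automatically rational; no inversion of $\Phi$ is needed.  The only constraints to maintain are the closing condition $\sum\widetilde z_j=0$ and the pairing $|\widetilde z_{2i-1}|=|\widetilde z_{2i}|$, and the paper handles these by an explicit finite construction: rotate $z_2,\ldots,z_{2n-4}$ to nearby rational arguments keeping their lengths, adjust $z_{2n-3},z_{2n-2}$ so the partial sum also has rational argument, and then place $\widetilde z_{2n-1}$ on the perpendicular bisector of the remaining gap $\widetilde\tau$ with rational argument, forcing $|\widetilde z_{2n-1}|=|\widetilde z_{2n}|$ and $\arg(\widetilde z_{2n})\in\mathbb{Q}\pi$ automatically.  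This replaces your implicit-function-theorem black box with a concrete two-degree-of-freedom computation.  Your piecewise-affine bilipschitz conclusion and the cone-angle continuity argument are fine and agree with the paper.
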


\begin{proof}[Proof:] Let $m \in Flat (S)$. Take a triangulation of $S$ by Euclidean triangles with all vertices being cone points and all cone points being vertices.  Using this triangulation, view $S$ as obtained from a generalized Euclidean polygon $P$ isometrically immersed in $\mathbb{C}$ by gluing the edges in pairs by isometries as described in Section~\ref{sec:Background}. We will construct an isometrically immersed polygon $P_\epsilon$ so that the sides of $P_\epsilon$ meet the real axis at angles that are rational multiples of $\pi$, and  $P_\epsilon \to P$ as $\epsilon \to 0$. Since two sides of $P_\epsilon$ which are glued together make angles with the real axis that are rational multiples of $\pi$, they are glued by translating and rotating by an angle in $\mathbb{Q}\pi$. Therefore the holonomy of the flat metric $m_\epsilon$ on $S$, which we get by gluing the sides of the immersed polygon $P_\epsilon$, is generated by rotations by angles in $\mathbb{Q}\pi$. Thus $m_\epsilon$ belongs in $Flat(S,q_\epsilon)$ for some $q_\epsilon\in\mathbb{Z}_+$.\\

To do this construction, we first orient the edges $s_i$ of the polygon $P$ using the boundary orientation coming from the immersion into $\mathbb{C}$. Immersion in  $\mathbb{C}$ makes each oriented edge $s_i$ into an oriented line segment which we may view as a vector, or equivalently a complex number which we denote $z_i$.  Let $\theta_i$ be the argument of $z_i$. We have $z_1+z_2+...+z_{2n}=0$. Assume, by rotating if necessary, that $\theta_1=0$ and by relabeling if necessary  that $z_{2i-1}$ and $z_{2i}$ correspond to the edges that are glued together, for all $i=1,\ldots,n$. We have $|z_1|=|z_2|,$ $|z_3|=|z_4|,\ldots, |z_{2n-1}|=|z_{2n}|$. See Figure~\ref{fig:poligon}.\\

\begin{figure}[htb]
\centering
\includegraphics[height=2in]{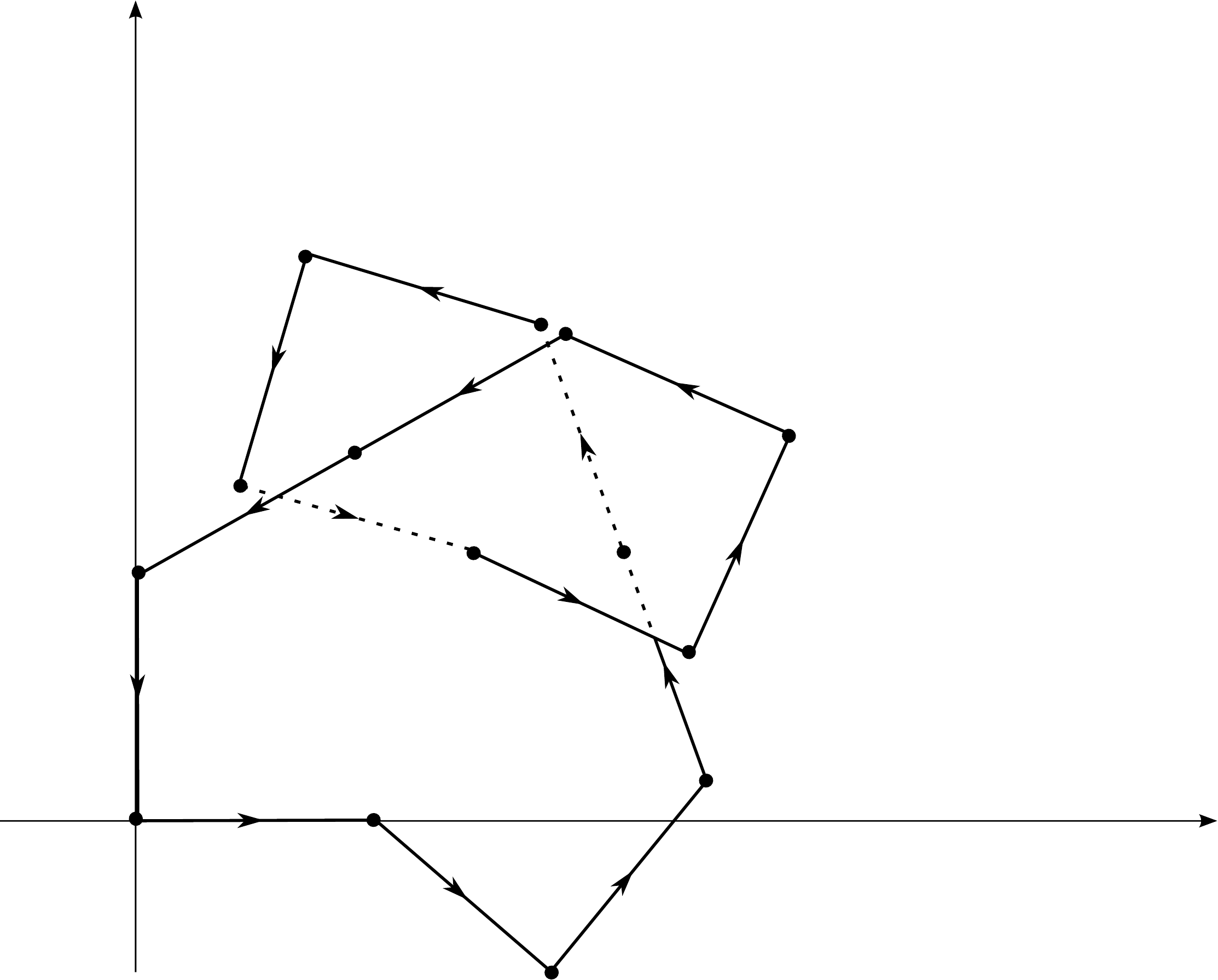}
\caption{An immersed polygon in $\mathbb{R}^2$ with one edge on the $x$-axis.}
\label{fig:poligon}
\begin{picture}(0,0)(0,0)
\put(75,50){$x$}
\put(-80,170){$y$}
\end{picture}
\end{figure}

If $z_{2i-1}=-z_{2i}$, for all $i=1,\ldots, n$, then $Hol(S) =\{Id\}$ and therefore $m\in Flat(S,1)$ and we set $m_n=m$ for all $n$.\\

Therefore we assume that there is some $i$ so that $z_{2i-1}\not=-z_{2i}$. After changing indices  if necessary we may assume $z_{2n-1}\not=-z_{2n}$. Fix $\displaystyle 0<\epsilon<\frac{|z_{2n-1}+z_{2n}|}{2} $. Denote $\tau= - z_1- \ldots  - z_{2n-2}$. Since $\tau=z_{2n-1} + z_{2n}$ it follows that $|\tau| > 2\epsilon$. Let $\widetilde{z}_1 = z_1$. For $2 \leq j \leq 2n-4$ choose $\widetilde{z}_j$ so that $|z_j|= |\widetilde{z}_j|$, $\widetilde{\theta}_j \in \mathbb{Q} \pi$ and $|z_j - \widetilde{z}_j| < \frac{\epsilon}{2n}$. Now, choose $\widetilde{z}_{2n-3}$, $\widetilde{z}_{2n-2}$ so that $|\widetilde{z}_{2n-3}| =|\widetilde{z}_{2n-2}|$, $\widetilde{\theta}_{2n-3},$  $\widetilde{\theta}_{2n-2} \in \mathbb{Q}\pi$, $|z_i-\widetilde{z}_i|<\frac{\epsilon}{2n}$ for $i=2n-3, 2n-2$, and also so that the argument of  $\widetilde{z}_1  + \ldots +  \widetilde{z}_{2n-3} + \widetilde{z}_{2n-2}$ is in $\mathbb{Q} \pi$. We arrange this in the following way: First construct $z_i'$, $i=2n-3,2n-2$, so that their arguments are in $\mathbb{Q} \pi$ and $|z_i'|= |z_i|$, $|z_i'-z_i|<\frac{\epsilon}{4n}$ for $i=2n-3,\, 2n-2$. Then construct $\widetilde{z}_i$, $i=2n-3,\, 2n-2$, so that $Arg(z_i')=Arg(\widetilde{z}_i)$, $|z_i'-\widetilde{z}_i|<\frac{\epsilon}{4n}$ and the argument of $\widetilde{z}_1  + \ldots +  \widetilde{z}_{2n-3} + \widetilde{z}_{2n-2}$ is in $\mathbb{Q} \pi$. See Figure~\ref{fig:vectors1}.\\

\begin{figure}[htb]
\centering
\includegraphics[height=1.5in]{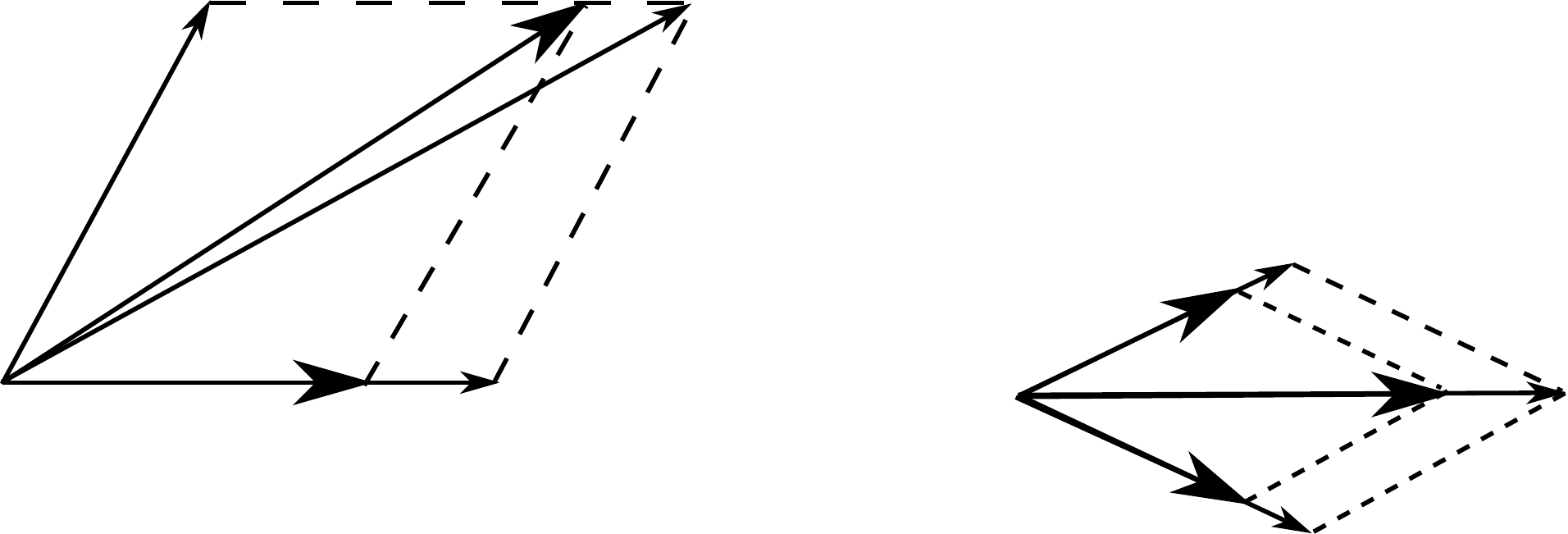}
\caption{Changing the angle of $\widetilde{z}_1  + \ldots +  \widetilde{z}_{2n-3} + \widetilde{z}_{2n-2}$ keeping $|\widetilde{z}_{2n-3}|=|\widetilde{z}_{2n-2}|$.  }
\label{fig:vectors1}
\begin{picture}(0,0)(0,0)
\put(100,27){${z}_{2n-3}'$}
\put(100,95){${z}_{2n-2}'$}

\put(60,37){$\widetilde{z}_{2n-3}$}
\put(60,90){$\widetilde{z}_{2n-2}$}

\put(-190,150){$\widetilde{z}_1  + \ldots + \widetilde{z}_{2n-4}$}
\put(-140,52){$\widetilde{z}_{2n-3}+\widetilde{z}_{2n-2}$}
\put(-67,58){${z}_{2n-3}'+{z}_{2n-2}'$}
\end{picture}
\end{figure}

Denote $\widetilde{\tau}= -\widetilde{z}_{1} - \ldots -\widetilde{z}_{2n-2} $. By the triangle inequality  $|\tau- \widetilde{\tau}|\leq\displaystyle \sum_{j=1}^{2n-2}|z_j-\widetilde{z}_{j}|<\epsilon$. Observe that $\widetilde{\tau}\not=0$, because if $\widetilde{\tau}=0$ then $2\epsilon<|\tau|<\epsilon$, which is a contradiction.\\

Since $\tau \not=0$, the point $z_{2n-1}$ is on the perpendicular bisector  $l$ of the vector $\tau$. Let $\widetilde{l}$ be the perpendicular bisector of the vector $\widetilde{\tau}$. Let $\delta=2 $dist$(z_{2n-1}, \widetilde{l})$. Note that as $\epsilon \to 0$, $\delta \to 0$. Now choose $\widetilde{z}_{2n-1} \in \widetilde{l} \cap B(z_{2n-1}, \delta)$ so that the argument of $\widetilde{z}_{2n-1}$ is in $\mathbb{Q} \pi$. See Figure~\ref{fig:vectors2}. Set $\widetilde{z}_{2n}=\widetilde{\tau} - \widetilde{z}_{2n-1}$. Since $\widetilde{z}_{2n-1}$ is on the perpendicular bisector of $\widetilde{\tau}$ then $|\widetilde{z}_{2n-1}|= |\widetilde{z}_{2n}|$ and since $Arg(\widetilde{\tau}) = \frac{1}{2} (\widetilde{\theta}_{2n-1} + \widetilde{\theta}_{2n})$, it follows that $\widetilde{\theta}_{2n} \in \mathbb{Q} \pi$.\\

\begin{figure}[htb]
\centering
\includegraphics[height=1.5in]{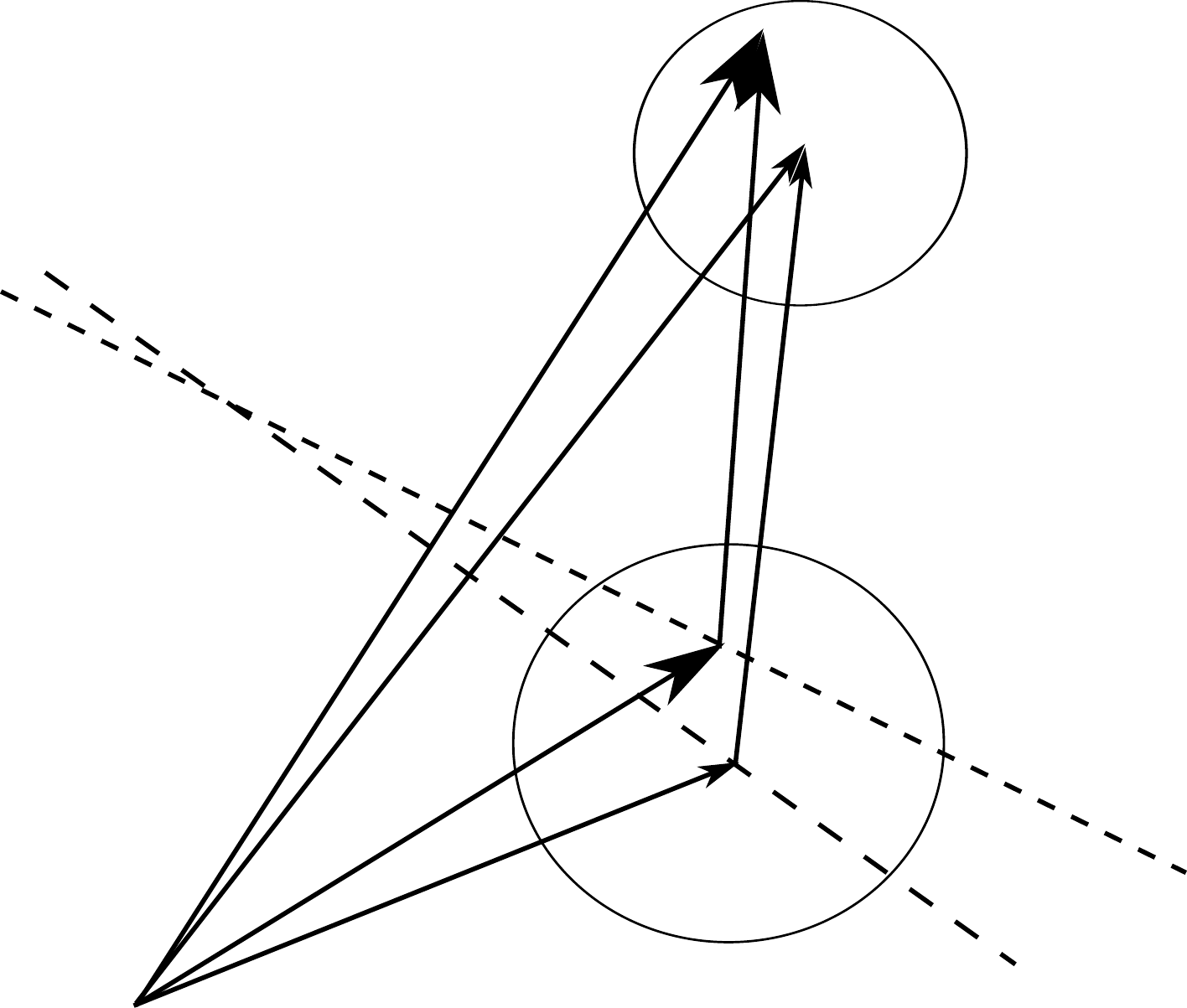}
\caption{Constructing the vector $\widetilde{z}_{2n-1}$. }
\label{fig:vectors2}
\begin{picture} (0,0) (0,0)
 	 \put(25,127){$\tau$}
     \put(12,145){$\widetilde{\tau}$}
      \put(12,54){$z_{2n-1}$}
       \put(15,81){$\widetilde{z}_{2n-1}$}
       
       \put(41, 135){$B(\tau,\epsilon)$}
          \put(43, 65){$B(z_{2n-1},\delta)$}
 \end{picture}  
\end{figure}

We have $\widetilde{z}_1  + \ldots +  \widetilde{z}_{2n} = 0$ and $\widetilde{\theta}_i \in \mathbb{Q} \pi$ for every $i$. Therefore for $\epsilon >0$ sufficiently small $\{\widetilde{z}_i\}_{i=1}^{2n}$ determines an isometrically immersed polygon $P_{\epsilon}$ with all sides meeting the real axis at angles that are rational multiples of $\pi$.\\

For sufficiently small $\epsilon$ there is a bilipschitz homeomorphism $f_{\epsilon}:P\rightarrow P_{\epsilon}$, which is linear on edges. For example, when $\epsilon>0$ is sufficiently small, we can take a triangulation of $P_{\epsilon}$ with the same combinatorics as the one used to construct $P$. Map each triangle of $P$ linearly to the corresponding triangle of $P_{\epsilon}$. That way we get a map $f_{\epsilon}$ which is linear, and thus bilipschitz, on each triangle. It follows that $f_{\epsilon}$ is bilipschitz on $P$ and as $\epsilon \rightarrow 0$ bilipschitz constant $K(f_{\epsilon})\rightarrow 1$. The surface $S$ is obtained by gluing the polygon $P$, so $S=P/_{\sim}$ where $t \sim \varphi(t)$ and $\varphi:\cup s_i \rightarrow \cup s_i$ is the gluing map. Define $\varphi_{\epsilon}=f_{\epsilon}\varphi f_{\epsilon}^{-1}:\cup f_{\epsilon}(s_i) \rightarrow \cup f_{\epsilon}(s_i)$. The map $f_{\epsilon}$ descends to $\bar{f_{\epsilon}}:P/ _\sim \rightarrow P_{\epsilon}/_{\sim _{\epsilon}}$ where $t\sim _{\epsilon}\varphi_{\epsilon}(t)$. Define a Euclidean cone metric $m_{\epsilon}$ on $S$ by pulling back the metric on $P_{\epsilon}/_{\sim _{\epsilon}}$ by $\bar{f_{\epsilon}}$.  The holonomy of $m_\epsilon$ is generated by rotations through angles in $\mathbb{Q} \pi$. Therefore there is $q_\epsilon \in \mathbb{Z}_+$ so that $m_\epsilon \in Flat(S, q_\epsilon)$ and $m_\epsilon \rightarrow m$ as $\epsilon \rightarrow 0$. 
\end{proof}

Theorem~\ref{thm:infinity} now follows easily from the previous theorem.

\begin{proof}[Proof of Theorem~\ref{thm:infinity}:] Suppose that we are given two distinct curves $\gamma, \gamma' \in \mathcal{C}(S)$ so that $\gamma \equiv_\infty \gamma'$. By Theorem~\ref{thm:polygon}, for every metric $m$ in $Flat(S)$ there is a sequence of metrics $\{m_n\in Flat(S, q_n)\}_{n=1}^\infty$ such that $id:(S, m_n) \rightarrow (S, m)$ is $K_n$-bilipschitz and $K_n \rightarrow 1$ as $n\rightarrow \infty$. Thus $\displaystyle \lim_{n \to \infty}l_{m_n}(\gamma) = l_m(\gamma)$ and $\displaystyle \lim_{n \to \infty}l_{m_n}(\gamma') = l_m(\gamma')$. Since $l_{m_n}(\gamma)=l_{m_n}(\gamma')$ by assumption then $l_{m}(\gamma)=l_{m}(\gamma')$. That implies $\gamma \equiv_\mathbb{R} \gamma'$ which is a contradiction by Theorem~\ref{thm:R}. 
\end{proof}

 Using the same idea as in  Theorem~\ref{thm:polygon} we can prove a stronger statement.

\begin{thm} \label{thm:polygon2} For every infinite sequence of distinct positive integers $\displaystyle \{q_i\}_{i=1}^\infty$,  $$\displaystyle\overline{\bigcup _{i=1}^\infty Flat(S,q_i)}=Flat(S).$$ \end{thm}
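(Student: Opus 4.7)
The plan is to adapt the proof of Theorem~\ref{thm:polygon} almost verbatim. In that proof, the approximating metric $m_\epsilon$ ended up in $Flat(S, q_\epsilon)$ where $q_\epsilon$ was essentially the LCM of the denominators of a finite list of rational multiples of $\pi$ chosen in the construction; here we have no control over what this $q_\epsilon$ is, so nothing is forcing it to lie in our prescribed sequence. To fix this, I would fix a single $q = q_i$ from the sequence at the outset and require all approximating angles to lie in the discrete set $\tfrac{2\pi}{q}\mathbb{Z}\subset\mathbb{Q}\pi$. Since $\{q_i\}$ is an infinite sequence of distinct positive integers, it is unbounded, so $\tfrac{2\pi}{q_i}$ can be made as small as desired by choosing $i$ large.

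Given $m \in Flat(S)$, I would represent $S$ as an immersed polygon $P$ with edge vectors $z_1,\ldots,z_{2n}$ exactly as in the proof of Theorem~\ref{thm:polygon}. For any $\eta>0$, pick $q = q_i$ from the sequence so large that $\tfrac{\pi}{q}$ is much smaller than both $\eta$ and the minimum of the $|z_j|$. Then run the construction of Theorem~\ref{thm:polygon} word for word, replacing every occurrence of ``$\widetilde\theta_j\in\mathbb{Q}\pi$'' by ``$\widetilde\theta_j\in\tfrac{2\pi}{q}\mathbb{Z}$'' and always picking the closest admissible value to the relevant target angle. The discreteness error introduced at each step is at most $\tfrac{\pi}{q}$, so tends to zero as $q\to\infty$ through the sequence.

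The delicate final step, in which $\widetilde z_{2n-1}$ is chosen on the perpendicular bisector of $\widetilde\tau=-(\widetilde z_1+\ldots+\widetilde z_{2n-2})$ and $\widetilde z_{2n}=\widetilde\tau-\widetilde z_{2n-1}$, proceeds as in the original proof, except we now need $Arg(\widetilde\tau)\in\tfrac{\pi}{q}\mathbb{Z}$ so that $2\,Arg(\widetilde\tau)\in\tfrac{2\pi}{q}\mathbb{Z}$ and hence $\widetilde\theta_{2n}=2\,Arg(\widetilde\tau)-\widetilde\theta_{2n-1}$ automatically lies in $\tfrac{2\pi}{q}\mathbb{Z}$. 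This is arranged by adjusting the common magnitude of $\widetilde z_{2n-3}$ and $\widetilde z_{2n-2}$ (whose arguments are already fixed in $\tfrac{2\pi}{q}\mathbb{Z}$) so that the sum $\widetilde z_1+\cdots+\widetilde z_{2n-2}$ acquires an argument in $\tfrac{\pi}{q}\mathbb{Z}$; since $\tfrac{\pi}{q}\mathbb{Z}$ is $\tfrac{\pi}{2q}$-dense in $\mathbb{R}$, the required magnitude adjustment is of size $O(1/q)$ and hence vanishes in the limit. Gluing the resulting polygon $P_\epsilon$ by the same combinatorics produces a metric $m_\eta\in Flat(S, q_i)$, and exactly as in Theorem~\ref{thm:polygon} the induced map $\mathrm{id}\colon(S,m_\eta)\to (S,m)$ is $K$-bilipschitz with $K\to 1$ as $\eta\to 0$.

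The main obstacle I expect is purely bookkeeping: verifying that each step of the original construction survives the restriction from the large group $\mathbb{Q}\pi$ to the single discrete set $\tfrac{2\pi}{q}\mathbb{Z}$ for a fixed $q$ drawn from our sequence. Only finitely many angle and closure constraints are imposed, and each of them can be satisfied because $q$ may be chosen arbitrarily large, so no new obstruction arises beyond what was already handled in Theorem~\ref{thm:polygon}.
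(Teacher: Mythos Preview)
Your proposal is correct and follows essentially the same approach as the paper: the paper's own proof is a one-line remark that Theorem~\ref{thm:polygon} goes through verbatim once one approximates each $\theta_j$ by an element of $\tfrac{2\pi}{q_l}\mathbb{Z}$ for a sufficiently large $q_l$ chosen from the sequence. You have simply unpacked this remark with more care, in particular noting that the intermediate constraint on $Arg(\widetilde\tau)$ must land in $\tfrac{\pi}{q}\mathbb{Z}$ rather than $\tfrac{2\pi}{q}\mathbb{Z}$; this is a detail the paper's sketch glosses over but which your argument handles correctly.
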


\begin{proof}[Proof:] The proof of this theorem follows the proof of Theorem~\ref{thm:polygon} if we approximate angles $\theta_j$ with angles $\widetilde{\theta}_j\in \mathbb{Z}\frac{2\pi}{q_l}$ for appropriately chosen $q_l\gg0$. 
\end{proof}

\begin{thm}\label{cor:infty2} Let $\displaystyle \{q_i\}_{i=1}^\infty$ be an infinite sequence of distinct positive integers. If $\gamma \equiv_{q_i} \gamma'$ for every $i=1,\, 2, \ldots,$ then $\gamma=\gamma'$ in $\mathcal{C}(S)$.\end{thm}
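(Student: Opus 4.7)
The plan is to run exactly the same contradiction argument used for Theorem~\ref{thm:infinity}, but with Theorem~\ref{thm:polygon2} in place of Theorem~\ref{thm:polygon}. Suppose for contradiction that $\gamma \ne \gamma'$ in $\mathcal{C}(S)$ while $\gamma \equiv_{q_i} \gamma'$ for every $i$. By Theorem~\ref{thm:R}, the equivalence relation $\equiv_{\mathbb{R}}$ is trivial, so there exists some $m \in Flat(S)$ with $l_m(\gamma) \ne l_m(\gamma')$.

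Now I would invoke Theorem~\ref{thm:polygon2} applied to the prescribed sequence $\{q_i\}$: it produces a sequence of metrics $m_n \in \bigcup_{i=1}^\infty Flat(S,q_i)$ with $id:(S,m_n)\to(S,m)$ being $K_n$-bilipschitz and $K_n \to 1$. (Strictly speaking, Theorem~\ref{thm:polygon2} is stated only as density, but inspection of its proof — which refines the proof of Theorem~\ref{thm:polygon} — gives exactly this bilipschitz convergence, since the approximating angles $\widetilde\theta_j \in \mathbb{Z}\tfrac{2\pi}{q_l}$ can be chosen arbitrarily close to $\theta_j$.) The bilipschitz estimate forces $l_{m_n}(\gamma)\to l_m(\gamma)$ and $l_{m_n}(\gamma')\to l_m(\gamma')$, because for any closed curve $\eta$ we have $K_n^{-1}\, l_{m_n}(\eta) \le l_m(\eta) \le K_n\, l_{m_n}(\eta)$.

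For each $n$, pick the index $i(n)$ with $m_n \in Flat(S,q_{i(n)})$. The hypothesis $\gamma \equiv_{q_{i(n)}} \gamma'$ then yields $l_{m_n}(\gamma) = l_{m_n}(\gamma')$ for every $n$. Passing to the limit contradicts $l_m(\gamma) \ne l_m(\gamma')$, completing the proof.

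The only substantive point is the parenthetical remark above: one must make sure Theorem~\ref{thm:polygon2} supplies not merely topological density but bilipschitz approximation with constants tending to $1$. I expect this to be a non-issue since the construction in Theorem~\ref{thm:polygon} produces bilipschitz homeomorphisms $\bar f_\epsilon$ with $K(f_\epsilon)\to 1$, and Theorem~\ref{thm:polygon2} is obtained from that construction simply by choosing the rational approximations of the arguments $\theta_j$ from the discrete subset $\bigcup_i \mathbb{Z}\tfrac{2\pi}{q_i}$ of $\mathbb{Q}\pi$ — a set which is still dense in $\mathbb{R}/2\pi\mathbb{Z}$ because the $q_i$ are infinitely many distinct positive integers.
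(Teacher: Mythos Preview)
Your proof is correct and matches the paper's approach exactly: the paper's own proof simply states that the result follows from Theorem~\ref{thm:polygon2} by the same argument that derives Theorem~\ref{thm:infinity} from Theorem~\ref{thm:polygon}. Your observation that Theorem~\ref{thm:polygon2} inherits the bilipschitz approximation (not merely density) from the construction in Theorem~\ref{thm:polygon} is precisely the point needed, and the paper implicitly relies on it as well.
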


\begin{proof}[Proof:] This follows from Theorem~\ref{thm:polygon2} in the same arguments that prove Theorem~\ref{thm:infinity} from Theorem~\ref{thm:polygon}.\end{proof}

From Theorem~\ref{thm:clein2} we know that $h$-equivalence implies $2$-equivalence (and thus implies $1$-equivalence). However from Corollary~\ref{cor:infty2} we see that  $h$-equivalence implies $q$-equivalence for only finitely many $q\in \mathbb{Z}_+$. More precisely, we have the following corollary.

\begin{cor} Let $\gamma,\gamma' \in \mathcal{C}(S)$ be distinct curves with $\gamma \equiv_h \gamma'$.  Then for all but finitely many $q$, $\gamma \not \equiv_q \gamma'$. \end{cor}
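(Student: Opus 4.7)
The plan is to observe that this corollary is essentially an immediate consequence of Corollary~\ref{cor:infty2}, and that the hypothesis $\gamma \equiv_h \gamma'$ plays no role in the deduction itself — it serves only to place the statement in context (namely, applied to the Horowitz--Randol families, where $h$-equivalent distinct curves exist in abundance).

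More concretely, I would argue by contraposition. Suppose for contradiction that there are infinitely many positive integers $q$ for which $\gamma \equiv_q \gamma'$. Enumerate them as a sequence $\{q_i\}_{i=1}^\infty$ of distinct positive integers. Then by hypothesis $\gamma \equiv_{q_i} \gamma'$ for every $i$, so Corollary~\ref{cor:infty2} forces $\gamma = \gamma'$ in $\mathcal{C}(S)$. This contradicts the assumption that $\gamma$ and $\gamma'$ are distinct. Hence only finitely many $q$ can satisfy $\gamma \equiv_q \gamma'$, which is exactly the desired conclusion.

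There is no real obstacle here; the entire content of the corollary is already contained in Corollary~\ref{cor:infty2}. The only thing worth emphasizing in the write-up is the contrast with Theorem~\ref{thm:clein2}: hyperbolic equivalence implies $2$-equivalence (and hence $1$-equivalence by Theorem~\ref{thm:12}), so one might naively hope that $\gamma \equiv_h \gamma'$ would propagate to $q$-equivalence for large classes of $q$. The corollary says this fails dramatically — beyond finitely many exceptions, $q$-equivalence must be broken whenever the curves are distinct, regardless of how strong the original equivalence is.
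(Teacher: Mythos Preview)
Your proof is correct and follows the same route as the paper: assume infinitely many $q$ with $\gamma \equiv_q \gamma'$, then invoke Corollary~\ref{cor:infty2} to force $\gamma = \gamma'$, contradicting distinctness. Your observation that the hypothesis $\gamma \equiv_h \gamma'$ is logically superfluous is also correct---it is there only to frame the result against Theorem~\ref{thm:clein2}.
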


\begin{proof}[Proof:] Assume that there exist infinitely many $q_i\in \mathbb{Z}_+$ so that $\gamma \equiv_h \gamma' \, \, \Rightarrow \, \, \gamma \equiv_{q_i} \gamma'$. Then, there exist two distinct curves $\gamma$ and $\gamma'$ so that $\gamma \equiv_{q_i} \gamma'$ for some infinite sequence $\{q_i\}_{i=1}^\infty$ of positive integers,  which is a contradiction to Corollary~\ref{cor:infty2}. 
\end{proof}


\section {Curves in $q$-differential metrics and relations $\equiv_q$}
\label{sec:q}

In this section we prove:

\begin{thm} \label{thm:thmq} For every $q_0,k\in\mathbb{Z}_+$ there are $k$ distinct homotopy classes of curves $\gamma_1,\ldots,\gamma_k \in \mathcal{C}(S)$ such that $\gamma_i\equiv_q\gamma_j$, for all $i,j$ and for every $q\leq q_0$. Thus for every $q \in \mathbb{Z}_+$, the relation  $\equiv_q$ is non-trivial. \end{thm}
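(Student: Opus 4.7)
The approach is to adapt the classical Horowitz--Randol word construction to produce curves whose lengths agree in every $q$-differential flat metric with $q\le q_0$. I proceed in four steps.

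\emph{Reduction to a single $Q$.} Set $Q=\mathrm{lcm}(1,2,\ldots,q_0)$. From the equivalence $q_1\mid q_2 \Leftrightarrow Flat(S,q_1)\subseteq Flat(S,q_2)$ recorded at the end of Section~\ref{sec:Background}, we have $Flat(S,q)\subseteq Flat(S,Q)$ for every $q\leq q_0$. Therefore it suffices, for each $k\in\mathbb{Z}_+$, to exhibit $k$ distinct homotopy classes that are pairwise $\equiv_Q$-equivalent: the same $k$ curves will then be pairwise $\equiv_q$-equivalent for every $q\leq q_0$.

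\emph{Construction of the words.} Fix a rank-two free subgroup $\langle a,b\rangle\leq\pi_1(S)$ carried by an embedded handle or one-holed torus, and exhibit $k$ distinct conjugacy classes $w_1,\ldots,w_k$ in $\langle a,b\rangle$ by a word construction tailored to $Q$-differential metrics. The intended property is that whenever one of the $w_i$ is developed in the universal cover by an $\mathrm{Isom}(\mathbb{R}^2)$-valued holonomy whose rotational part lies in $\langle\rho_{2\pi/Q}\rangle$, the resulting broken-line path decomposes into a multiset of translation vectors in $\mathbb{C}$ that is the same for every $i$, up to the action of $\langle\rho_{2\pi/Q}\rangle$ on $\mathbb{C}$. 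This is the natural Euclidean analog of the trace identity $\mathrm{tr}(\rho(w_i))=\mathrm{tr}(\rho(w_j))$ used by Horowitz in the $\mathrm{SL}_2(\mathbb{R})$ setting, and should be achievable by combining cyclic rearrangements with well-chosen commutator blocks.

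\emph{Length equality in $Flat(S,Q)$.} Fix $m\in Flat(S,Q)$. By the geodesic characterization given in Section~\ref{sec:Background}, an $m$-geodesic representative of $\gamma_{w_i}$ lifts to a concatenation of Euclidean segments in the universal cover of $S\setminus X$ whose lengths sum to $l_m(\gamma_{w_i})$. Because the multiset of developed translation vectors is the same for every $i$ (Step~2) and rotation by $\rho_{2\pi/Q}$ preserves Euclidean length, it follows that $l_m(\gamma_{w_i})=l_m(\gamma_{w_j})$. The degenerate situations---geodesics meeting cone points, or non-uniqueness of geodesic representatives on flat cylinders---are handled either by approximating $m$ within $Flat(S,Q)$ by nearby metrics whose geodesic representatives avoid the cone set, or by a direct angle-accounting at each cone point; neither affects the total length. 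Distinctness of $w_1,\ldots,w_k$ in $\mathcal{C}(S)$ is then a routine word-length check.

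\emph{Main obstacle.} The entire difficulty is concentrated in the combinatorial content of Step~2 and the matching half of Step~3. In hyperbolic geometry length is an explicit function of a single-variable trace and the Horowitz--Randol identity reduces to a trace equality in $\mathrm{SL}_2(\mathbb{R})$. In a flat cone metric no such character captures length: the developed geodesic is a polygonal path whose length depends on a non-linear interplay between arbitrary translations in $\mathbb{C}$ and the finite rotational holonomy $\langle\rho_{2\pi/Q}\rangle$. The crux is therefore to produce words $w_i$ whose developed polygonal paths admit a common length decomposition \emph{for every} metric in $Flat(S,Q)$, rather than merely equal trace invariants; the rest of the argument is either a standard reduction or direct angle-chasing in the plane.
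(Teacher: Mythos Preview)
Your proposal is not a proof: Step~2 is never carried out, and you explicitly acknowledge in your ``Main obstacle'' paragraph that producing the words and verifying the length identity is the entire content of the theorem. What remains is a strategy outline, and the strategy itself has a flaw. You want words $w_i$ whose \emph{developed} polygonal paths have the same multiset of translation vectors modulo $\langle\rho_{2\pi/Q}\rangle$; but the $m$-geodesic representative of $\gamma_{w_i}$ is not obtained by naively developing $w_i$ along a fixed edge-path. Which cone points the geodesic hits, and in what combinatorial order, depends on the specific metric $m$, so there is no metric-independent ``multiset of segments'' attached to a word in $\pi_1(S)$. Your suggestion to perturb $m$ so that geodesics avoid cone points makes this worse, not better: away from cone points the geodesic is a single straight segment whose length is governed by the translation length of a single element of $\mathrm{Isom}(\mathbb{R}^2)$, and there is no reason two inequivalent words should have equal translation length for \emph{every} holonomy with rotational part in $\langle\rho_{2\pi/Q}\rangle$.

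The paper's approach is essentially the opposite of yours. Rather than avoiding cone points, it first constructs (Lemma~\ref{lem:lem1}) a curve $\gamma$ whose geodesic representative is \emph{forced through} a cone point in every $m\in Flat(S,q)$, $q\le q_0$. A Gauss--Bonnet argument (Lemma~\ref{lem:lem2}) then shows that geodesics joining suitable translates of lifts of $\gamma$ must share an initial segment with $\widetilde\gamma$. This is exactly what allows the construction of a fixed $2$-complex $\Gamma$ (two circles joined by an arc, fattened by two triangles) and, for each $m$, a locally convex local isometry $f\colon\Gamma\to (S,m)$ realizing a fixed homomorphism $\phi\colon F(a,b)\to\pi_1(S)$. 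Lengths of $\phi(w)$ in $S$ then equal lengths of $w$ in $\Gamma$, and in $\Gamma$ the words $(ab)^{k-j}(ab^{-1})^j$ trivially have the same length since $b$ and $b^{-1}$ traverse the same circle. Distinctness comes from homology, not word length. The locally-convex-subspace idea is the missing ingredient in your outline: it is what converts a metric-dependent geodesic problem into a metric-independent combinatorial one.
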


The idea of the proof is the following.  For every $q$, we construct a $2$--complex $\Gamma$, where $\pi_1(\Gamma)$ is a rank $2$ free group, and a homotopy class of maps $\Gamma \to S$.  Then for every metric $m\in Flat(S,q)$, $q\leq q_0$, we will define a metric on the $2$-complex $\Gamma$ and a map $\Gamma \to S$ in the given homotopy class, so that with respect to the metric on $\Gamma$ and $m$ on $S$, the map is locally convex and locally isometric.  In particular, the length of a homotopy class of curves in $\Gamma$ is equal to the length of the image homotopy class in $S$.  The metrics that occur on $\Gamma$ are very restrictive, and it is easy to construct a set of homotopy classes of curves $w_0,w_1,\ldots,w_{k-1}$ in $\Gamma$ with equal lengths in any metric assigned to $\Gamma$ from the construction.  The image homotopy classes in $S$ then also have equal length for any metric $m \in Flat(S,q)$, and by a homological argument, we prove that the homotopy classes are all distinct.\\

\begin{lem}\label{lem:lem1} For any $q_0\in \mathbb{Z}_+$, there is a curve $\gamma \in \mathcal{C}(S)$ such that for every  $m\in Flat(S, q)$, $q\leq q_0$ the geodesic representative $\gamma_m$ of $\gamma$ contains a cone point and $[\gamma] \not= 0$ in $H_1(S,\mathbb{Z})$.\end{lem}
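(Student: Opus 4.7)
I would prove the lemma by a single topological construction, producing one $\gamma$ that works for all $m\in Flat(S)$ simultaneously (hence for every $q_0$). The geometric input is the following: any closed geodesic $c$ in a metric $m\in Flat(S)$ that avoids the cone set lies in a maximal open Euclidean cylinder in $S$ -- obtained by translating $c$ perpendicular to its direction until each parallel line meets a cone point -- and $c$ is freely homotopic, inside the closure of this cylinder, to a power of its core, which is a simple closed curve. Equivalently, if $\gamma\in \mathcal{C}(S)$ is not freely homotopic to any power of any simple closed curve, then no geodesic representative of $\gamma$ can be smooth: every such representative must pass through at least one cone point.

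It therefore suffices to exhibit $\gamma\in\mathcal{C}(S)$ that is (a) homologically nontrivial, (b) primitive in $\pi_1(S)$ (i.e.\ not a proper power of any element), and (c) not freely homotopic to any simple closed curve. Given these, a hypothetical equality $\gamma=\delta^{k}$ in free homotopy with $\delta$ simple and $k\in\mathbb{Z}$ would force $|k|=1$ by (b), hence $\gamma$ would be freely homotopic to $\delta$ and thus simple, contradicting (c); this rules out powers of simple closed curves entirely, and the reduction above completes the argument.

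For an explicit construction in genus $g\geq 2$, I would take simple closed curves $\alpha,\beta$ on $S$ with $i(\alpha,\beta)\geq 2$ and with $[\alpha],[\beta]$ linearly independent in $H_1(S,\mathbb{Z})$ -- such a pair is easily produced by applying a suitable power of a Dehn twist to one member of a standard symplectic basis pair. Setting $\gamma=\alpha\beta$ (concatenated at a common intersection point), the class $[\gamma]=[\alpha]+[\beta]$ is nonzero and extends to a basis of $H_1(S,\mathbb{Z})$, hence is primitive; divisibility of $[\gamma]$ in $H_1$ would be inherited from any root of $\gamma$ in $\pi_1(S)$, so $\gamma$ is also primitive in $\pi_1(S)$, giving (a) and (b). Property (c) follows because each of the $i(\alpha,\beta)-1\geq 1$ transverse crossings of $\alpha$ and $\beta$ away from the basepoint persists as an essential self-intersection of the hyperbolic geodesic representative of $\gamma$.

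The main technical obstacle is establishing (c): a priori, the self-crossings of the naive concatenation $\alpha\beta$ might be removable by a free homotopy of $\gamma$. The cleanest way to rule this out is to pass to a hyperbolic metric on $S$, replace $\gamma$ by its unique geodesic representative $\gamma^{\ast}$, and use a bigon-elimination argument on the axes of lifts of $\alpha,\beta,\gamma$ in $\tilde S = \mathbb{H}^{2}$ to show that at least one non-basepoint crossing survives. This is the step that requires the most care; everything else is a direct consequence of the reduction in the first paragraph.
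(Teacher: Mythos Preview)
Your reduction rests on the assertion that every closed $m$--geodesic $c$ missing the cone set is freely homotopic to a power of a \emph{simple} closed curve, namely the core of the maximal flat cylinder about $c$. This is correct for $q\le 2$ but fails once $q\ge 3$, so the argument cannot produce a single $\gamma$ that works for every $q_0$.

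The point is this. At a transverse self--intersection $p$ of $c$, the sub--loop of $c$ from $p$ back to $p$ has holonomy equal to rotation through the intersection angle, which must therefore lie in $\tfrac{2\pi}{q}\mathbb{Z}$. When $q\le 2$ the only options are $0$ and $\pi$; in either case the two branches of $c$ at $p$ are tangent, and uniqueness of geodesics forces $c$ to retrace itself, contradicting $|c'|\equiv 1$. So for $q\le 2$ every cone--point--free closed geodesic is indeed simple up to powers, and your scheme goes through. For $q\ge 3$, however, a transverse crossing at angle $2\pi/q$ is allowed and genuinely occurs: pass to the canonical $q$--fold branched cover $\widehat S\to S$ that trivializes the holonomy, take a simple closed geodesic $\hat c$ on the resulting translation surface which meets one of its images under the $\mathbb{Z}/q$ deck group, and project. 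The image $c\subset S$ is a primitive closed geodesic avoiding every cone point, and since $(S,m)$ is NPC no two geodesic arcs bound a bigon, so the self--intersection of $c$ is essential. Thus $c$ is \emph{not} homotopic to any power of a simple curve, directly contradicting your geometric input; equivalently, the maximal cylinder about $c$ is only immersed in $S$, and its core is not simple.

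The paper's proof accepts that $\gamma$ must depend on $q_0$. It uses ergodicity of the geodesic flow to produce a $\gamma$ possessing $q_0+1$ lifts to $\mathbb{H}^2$ whose endpoints at infinity pairwise link; if $\gamma_m$ missed the cone set for some $m\in Flat(S,q)$ with $q\le q_0$, parallel transport inside the flat strip about one lift would assemble $q_0+1$ pairwise distinct tangent directions at a single regular point, any two differing by a multiple of $2\pi/q$ --- impossible, since at most $q\le q_0$ such directions exist. Your construction gives a clean proof of the lemma for $q_0\le 2$, but for larger $q_0$ you need a curve whose ``directional complexity'' grows with $q_0$, as in the paper's argument.
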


\begin{proof}[Proof:]  Fix a hyperbolic metric on $S$ and identify the universal cover as the hyperbolic plane $\mathbb{H}^2 \rightarrow S$. We will use the Poincar\'e disk model for $\mathbb{H}^2$ throughout the proof.  Fix $q_0 \in \mathbb{Z}_+$, and take $q_0+1$ bi-infinite geodesics $\alpha_1,\ldots , \alpha_{q_0+1}$ in $\mathbb{H}^2$ meeting at $0$. See Figure~\ref{fig:krug}. \\

\begin{figure}[htb]
\centering
\includegraphics[height=3in]{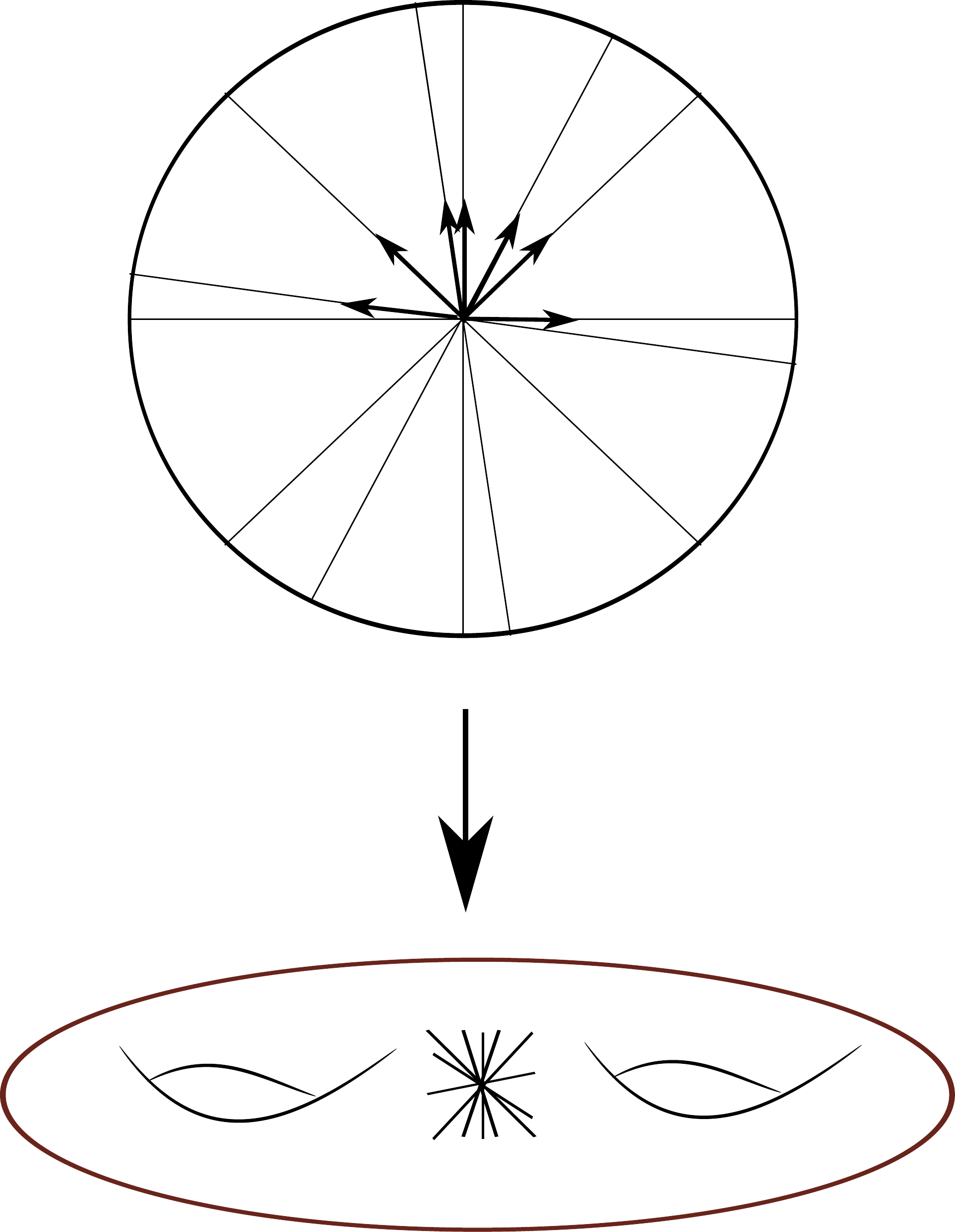}
\caption{Projecting  $\alpha_1,\ldots , \alpha_{q_0+1}$ into the surface $S$.}
\label{fig:krug}
\end{figure}

By ergodicity of the geodesic flow, there is a dense geodesic on $S$ \cite{hopf}.  Given $\epsilon > 0$, we can therefore construct an
$\epsilon$-dense closed geodesic.\\

\begin{figure}[htb]
\centering
\includegraphics[height=1.5in]{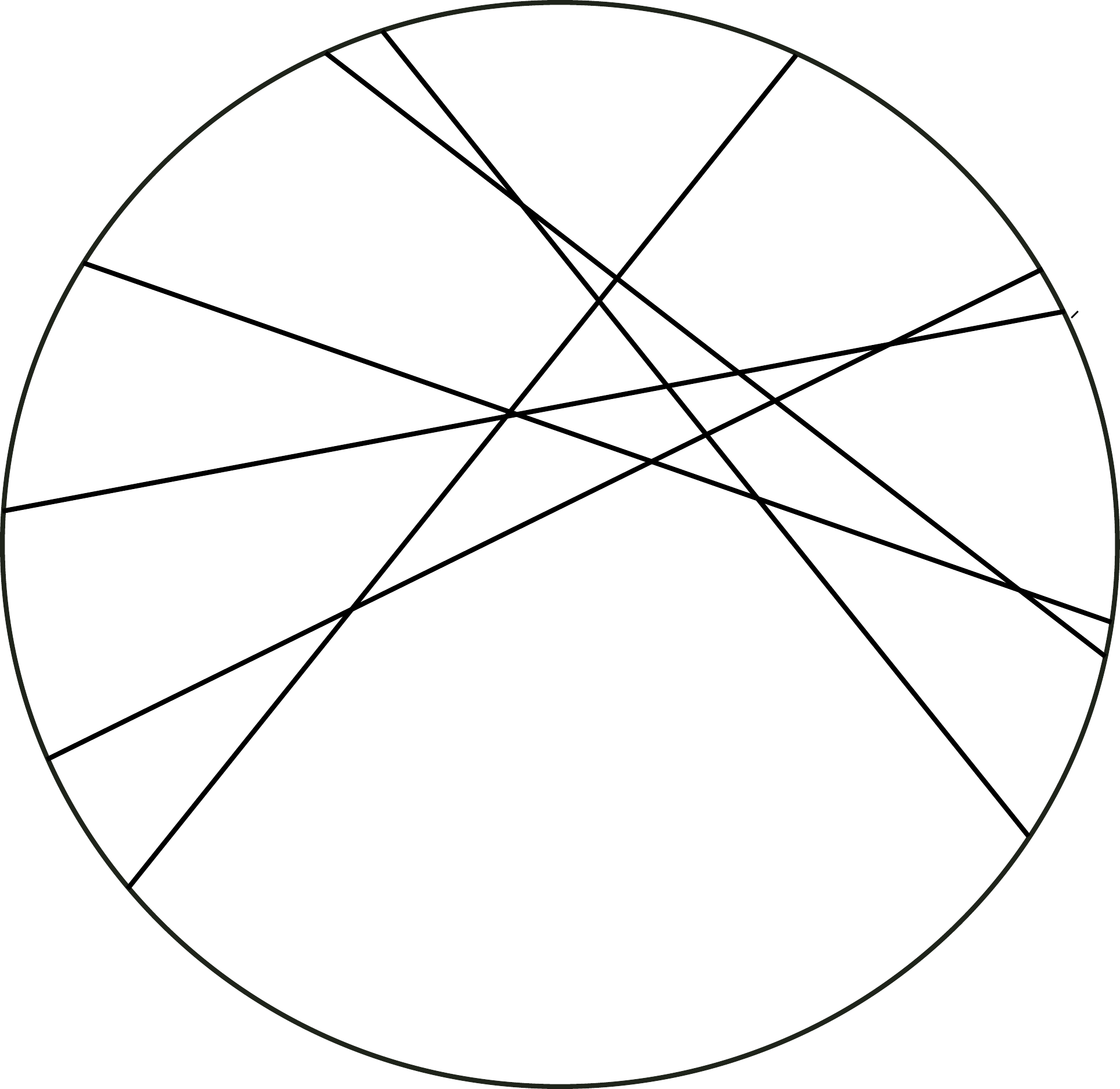}
\caption{Lifts of $\gamma_m$ to $\mathbb{H}^2$. }
\label{fig:krug2}
\end{figure}

From this it follows that we can construct a closed geodesic $\gamma$ on $S$ with lifts $\widetilde{\alpha}_1,\ldots,\widetilde{\alpha}_{q_0+1}$ having tangent vectors within $\epsilon$ distance from the tangent vectors of $\alpha_1,\ldots,\alpha_{q_0+1}$ at $0$. For $\epsilon$ sufficiently small the end points of  $\widetilde{\alpha}_1,\ldots,\widetilde{\alpha}_{q_0+1}$ pairwise link in the same pattern as $\alpha_1,\ldots,\alpha_{q_0+1}$. We say that two bi-infinite geodesics in $\mathbb{H}^2$ link each other if the endpoints of one geodesic separate the circle at infinity $S^1_\infty$ of $\mathbb{H}^2$ in two connected components so that the two endpoints of the second geodesic belong to different components. In particular, every pair $\widetilde{\alpha}_i, \widetilde{\alpha}_j$ intersects. For every $m \in Flat(S,q)$ and  $m$-geodesic representative $\gamma_m$ of $\gamma$ in $m$ there are lifts $(\widetilde{\gamma}_1)_m, \ldots, (\widetilde{\gamma}_{q_0+1})_m$ which are quasi-geodesics in $\mathbb{H}^2$, with the same endpoints as  $\widetilde{\alpha}_1,\ldots,\widetilde{\alpha}_{q_0+1}$, and hence with endpoints that all pairwise link. See Figure~\ref{fig:krug2}.\\

If $[\gamma] = 0$ in $H_1(S,\mathbb{Z})$, we replace $\gamma$ with a different curve as follows.  Let $\delta$ be any curve with $[\delta] \neq 0$ in $H_1(S,\mathbb{Z})$ that intersects $\gamma$.  Construct a new curve which runs $n$ times around $\gamma$, then at the intersection point switches and runs once around $\delta$. If $n$ is large enough, this constructs a curve which maintains the property of having $q_0+1$ lifts with endpoints that pairwise link.  This new curve is homologous to $\delta$, and so we replace $\gamma$ with this curve. \\

Now assume that there is $m\in Flat(S,q)$, $q \leq q_0$ so that $\gamma_m$ does not go through a cone point. Then there is an isometrically immersed Euclidean cylinder  $S^1 \times [0,a] \to S$, for some $a>0$, such that the image of $S^1 \times \{\frac{a}{2}\}$ is the geodesic $\gamma_m$ and the image of $S^1 \times [0,a]$ does not contain any cone points. It follows that $\gamma_m$ has only finitely many transverse self intersecting points. At any point $P$ of self intersection of $\gamma_m$, one can form the loops based at $P$ by following one of the arcs of $\gamma_m$ until returning to $P$.  The holonomy around this loop is just the rotation by the angle of self intersection of $\gamma_m$ at $P$. Thus, any tangent vector is rotated by the angle of self intersection. Because the holonomy group of $m$ is in $\langle \rho_{\frac{2\pi}{q}} \rangle$, every tangent vector gets rotated by some integer multiple of $\frac{2\pi}{q}$. Therefore, the angles of intersections are integer multiples of $\frac{2\pi}{q}$.\\

Lifts of $\gamma_m$ have neighborhoods that are lifted cylinders.  These are strips isometric to $\mathbb{R} \times [0,a]$ in the universal cover. We consider the $q_0+1$ lifts $(\widetilde{\gamma}_1)_m, \ldots, (\widetilde{\gamma}_{q_0+1})_m $ as above and fix one of the lifts $(\widetilde{\gamma}_1)_m$ and a strip about it. Let $P \in (\widetilde{\gamma}_1)_m \cap (\widetilde{\gamma}_2)_m $ be the point  of intersection of $(\widetilde{\gamma}_2)_m$ with $(\widetilde{\gamma}_1)_m$.  For every $j \geq 3$, parallel transport the tangent vector to $\widetilde{\gamma}_{j}$ at the point of intersection $(\widetilde{\gamma}_{1})_m \cap (\widetilde{\gamma}_{j})_m$ inside the strip to $P$. In the complement of the cone points in the universal cover, the holonomy is also in $\langle \rho_{\frac{2\pi}{q}} \rangle$. Thus, parallel transport of tangent vectors along $(\widetilde{\gamma}_k)_m$, $k=2,\ldots,q_0+1$, from the point of intersection with another lift of $\gamma_m$ to $P$ rotates them by some integer multiple of $\frac{2\pi}{q}$.  The point $P$ is not a cone point and there are no cone points inside the strip. If  $(\widetilde{\gamma}_{1})_m$, $(\widetilde{\gamma}_{i})_m$, $(\widetilde{\gamma}_{j})_m$ do not intersect in a common point then they form a triangle in $\widetilde{S}$. Since $\widetilde{S}$ is a $CAT(0)$ space the angle sum of a triangle is less than $\pi$, and hence $(\widetilde{\gamma}_i)_m$ and $(\widetilde{\gamma}_j)_m$ cannot make the same angle with $(\widetilde{\gamma}_1)_m$. Hence no two tangent vectors get transported to the same vector. See Figure~\ref{fig:strips}. \\

\begin{figure}[htb]
\centering
\includegraphics[height=1.9in]{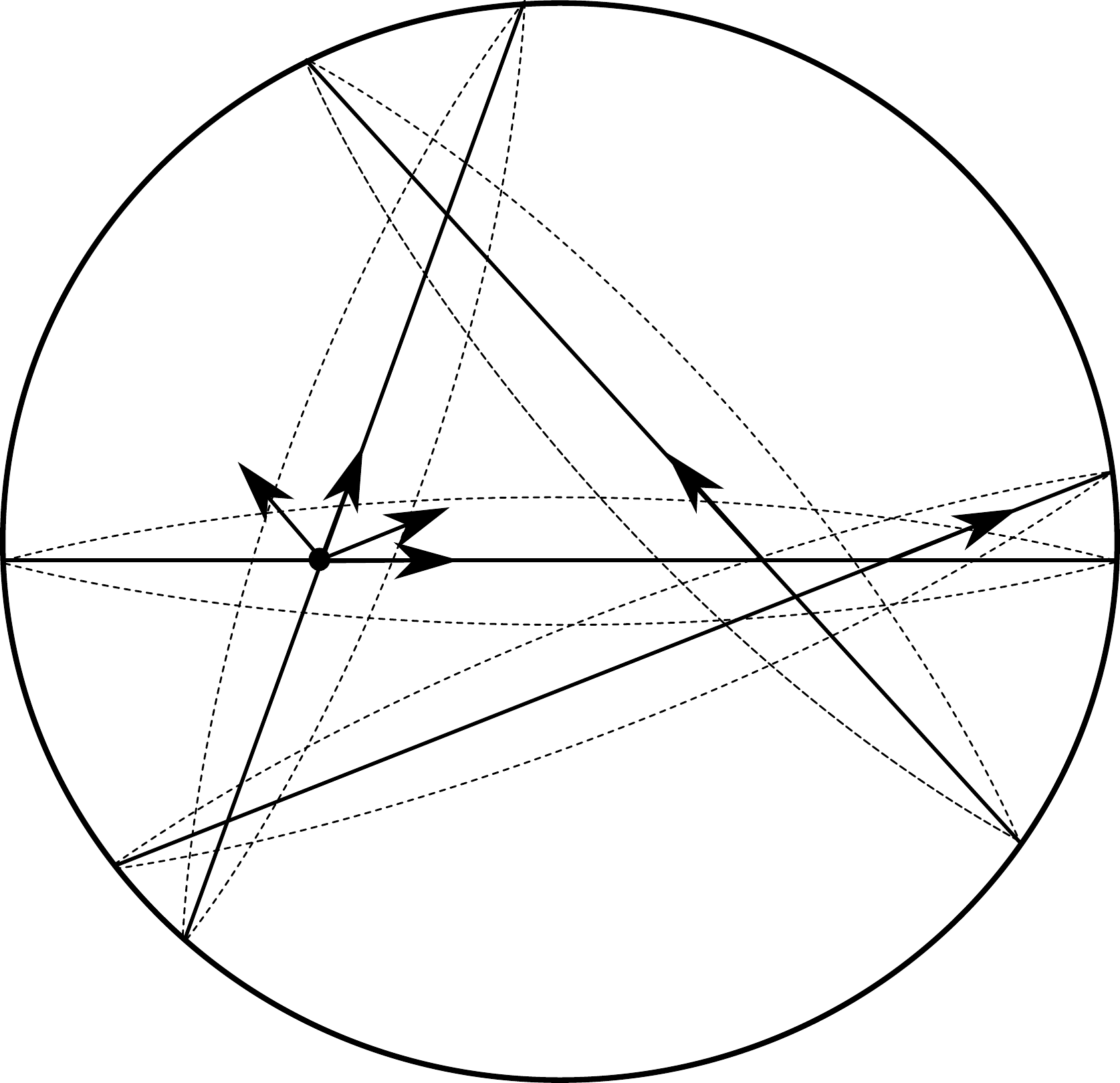}
\caption{Strips in the universal cover.}
\label{fig:strips}
\begin{picture}(0,0)(0,0)
\put(-42,93){$P$}
\end{picture}
\end{figure}

By the holonomy condition all angles of intersections are integer multiples of $\frac{2\pi}{q}$, and hence greater than or equal to $\frac{2\pi}{q_0}$. On the other hand we have $q_0+1$ vectors based at $P$, no two of them equal, therefore there is a pair of vectors with the angle between them less than $\frac{2\pi}{q_0}$. It follows that for some $j$, $(\widetilde{\gamma}_{1})_m$ and $(\widetilde{\gamma}_{j})_m$ make an angle which is not an integer multiple of $\frac{2\pi}{q}$. This is a contradiction and thus proves that $\gamma_m$ has to contain at least one cone point for every $m \in Flat(S, q)$, $q \leq q_0$.
\end{proof}


\begin{lem} \label{lem:lem2} Let $\gamma$ be the curve constructed in Lemma~\ref{lem:lem1}, and let $m$ be a metric in $Flat(S, q)$.  Given two lifts $\widetilde{\gamma}$, $\widetilde{\gamma}_0$ with linking endpoints and  stabilizers generated by $h$ and $g$, respectively, let $w\in \widetilde{\gamma} \cap \widetilde{\gamma}_0$. Then the $m$-geodesic from $h^{(q+2)}(w)\in \widetilde{\gamma}$ to any point along $\widetilde{\gamma}_0$ must run along a positive length segment of $\widetilde{\gamma}$. Moreover, this geodesic meets  $\widetilde{\gamma}_0$ on $[g^{-(q+2)}(w),g^{(q+2)}(w)] \subset\widetilde{\gamma}_0 $.\end{lem}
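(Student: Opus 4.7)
The approach is to prove both assertions by a contradiction argument exploiting the holonomy of $m$ and the existence of cone points on $\widetilde{\gamma}$. Set $\widetilde{\gamma}_0^j:=h^j(\widetilde{\gamma}_0)$ for $j\in\mathbb{Z}$. These are pairwise disjoint lifts of $\gamma_m$ (their endpoints on the circle at infinity accumulate in pairs on the two fixed points of $h$, so none of them link each other), and each crosses $\widetilde{\gamma}$ at $h^j(w)$ at the same angle $\theta_0$ that $\widetilde{\gamma}_0$ makes with $\widetilde{\gamma}$ at $w$, because $h$ is an isometry of $m$ stabilizing $\widetilde{\gamma}$. By Lemma~\ref{lem:lem1}, $\widetilde{\gamma}$ contains cone points arranged $h$-periodically, so between $w$ and $h^{q+2}(w)$ the segment of $\widetilde{\gamma}$ contains at least $q+2$ cone points, each with side angles $\ge\pi$ and total excess in $\tfrac{2\pi}{q}\mathbb{Z}_+$.

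Suppose, for contradiction, that the geodesic $\eta$ from $h^{q+2}(w)$ to some $p\in\widetilde{\gamma}_0$ does not share a positive-length initial segment with $\widetilde{\gamma}$. Then $\eta$ departs $\widetilde{\gamma}$ at $h^{q+2}(w)$ into one side and, by continuity together with disjointness of the $\widetilde{\gamma}_0^j$, must transversely cross each of the $q+1$ intermediate lifts $\widetilde{\gamma}_0^1,\ldots,\widetilde{\gamma}_0^{q+1}$ at points $P_1,\ldots,P_{q+1}$ off $\widetilde{\gamma}$. The key observation is that, because the holonomy of $m$ lies in $\langle\rho_{2\pi/q}\rangle$ and the lifts $\widetilde{\gamma}_0^j$ are deck-translates of one another by $h$, the tangent directions of $\widetilde{\gamma}_0^j$ at $P_j$ parallel-transported along $\eta$ to a common reference point cycle through a set of cardinality at most $q$; so by pigeonhole two indices $i<j$ yield identical tangent directions at $P_i$ and $P_j$.

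This coincidence produces a geodesic quadrilateral $Q$ with vertices $h^i(w),P_i,P_j,h^j(w)$ and sides lying on $\widetilde{\gamma}$, $\widetilde{\gamma}_0^j$, $\eta$, $\widetilde{\gamma}_0^i$. A direct computation in the developed picture (using that $\eta$ is straight between $P_i$ and $P_j$ and that the matched tangent directions agree) shows that the interior angles of $Q$ at $P_i$ and $P_j$ sum to $\pi$, while those at $h^i(w), h^j(w)$ sum to $\theta_0+(\pi-\theta_0)=\pi$, so the four corner angles total exactly $2\pi$. On the other hand, Gauss--Bonnet for the NPC cone metric on $Q$ gives $\sum_{\text{corners}}\alpha\le 2\pi - \sum_{\text{cone pts on }\partial Q}(a_k-\pi) - \sum_{\text{cone pts in }Q^\circ}(c_k-2\pi)$, and since the subarc of $\widetilde{\gamma}$ between $h^i(w)$ and $h^j(w)$ contains at least one cone point contributing strictly positive excess to the side facing $Q$, we obtain $\sum\alpha<2\pi$, the desired contradiction.

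For the moreover statement, apply the just-proved claim with the roles of $\widetilde{\gamma}\leftrightarrow\widetilde{\gamma}_0$ and $h\leftrightarrow g$ interchanged: any geodesic from $p\in\widetilde{\gamma}_0\setminus[g^{-(q+2)}(w),g^{q+2}(w)]$ to $h^{q+2}(w)$ must share a positive-length initial segment with $\widetilde{\gamma}_0$. Reversing time, $\eta$ reaches such a $p$ by a final subarc lying on $\widetilde{\gamma}_0$, so $\eta$ first meets $\widetilde{\gamma}_0$ at the point where this subarc begins, which lies in $[g^{-(q+2)}(w),g^{q+2}(w)]$. The main obstacle is the pigeonhole / angle computation: one must choose the parallel-transport scheme so that the $q+1$ tangent directions genuinely lie in a set of size $\le q$, verify that the matched-direction case forces the angle sum at $P_i,P_j$ to equal $\pi$, and be careful about cone points possibly lying in $Q^\circ$ or on the remaining sides, as well as the subtle case in which the cone-point excess on $\widetilde{\gamma}$ is split unevenly between the two sides (requiring that $j-i$ spans enough periods that the cumulative side-excess facing $Q$ is strictly positive).
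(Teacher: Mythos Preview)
Your argument has a genuine gap at the pigeonhole step, and you are right to flag it as the crux. The claim that the tangent directions of $\widetilde{\gamma}_0^j$ at $P_j$, parallel-transported along $\eta$ to a common point, take at most $q$ values cannot be justified here. The analogous pigeonhole in Lemma~\ref{lem:lem1} works precisely because one assumes (for contradiction) that $\gamma_m$ misses all cone points, so its tangent direction is globally constant modulo the holonomy group $\langle\rho_{2\pi/q}\rangle$. In the present lemma $\gamma_m$ \emph{does} pass through cone points (that is exactly what Lemma~\ref{lem:lem1} guarantees), and at each such cone point the tangent direction of the geodesic jumps by an amount that need not lie in $\tfrac{2\pi}{q}\mathbb{Z}$: the cone angle $c(x)$ is an integer multiple of $\tfrac{2\pi}{q}$, but $\widetilde{\gamma}_0$ splits it into two side angles $\theta,\theta'\ge\pi$ with $\theta+\theta'=c(x)$, and individually $\theta-\pi$ may be any real number in $[0,c(x)-2\pi]$. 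Thus the tangents to $\widetilde{\gamma}_0$ at the (uncontrolled) points $h^{-j}(P_j)$ are not confined to finitely many directions, and no pigeonhole is available. A secondary problem is that parallel transport along $\eta$ is itself not well-defined if $\eta$ passes through cone points, and the angle bookkeeping at $h^i(w),h^j(w)$ breaks down if $w$ happens to be a cone point.

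The paper's proof uses the cone points on $\widetilde{\gamma}$ in a different way. Within each fundamental domain $[h^k(w),h^{k+1}(w))\subset\widetilde{\gamma}$ there is a cone point with side angle strictly greater than $\pi$ on each side (otherwise one could push $\widetilde{\gamma}$ off all cone points, contradicting Lemma~\ref{lem:lem1}). Fixing such a point $v$ in the last period, one extends the segment $[h^{q+2}(w),v]\subset\widetilde{\gamma}$ to a geodesic ray $\sigma$ that makes angle exactly $\pi$ on one side at every subsequent cone point, and similarly a ray $\sigma'$ on the other side. The heart of the argument is that $\sigma$ and $\sigma'$ cannot meet $\widetilde{\gamma}_0$: if they did, the resulting triangle would contain $q$ full periods of $\widetilde{\gamma}$ along one side, and doubling it to a sphere and applying Gauss--Bonnet yields a contradiction, because the holonomy constraint forces $\sum_j(\theta_j-\pi)$ over one period to be a positive multiple of $\tfrac{2\pi}{q}$, hence at least $\tfrac{2\pi}{q}$, and $q$ periods contribute at least $2\pi$. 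Once the barrier rays $\sigma,\sigma'$ are in place, any geodesic from $h^{q+2}(w)$ to $\widetilde{\gamma}_0$ that does not initially run along $\widetilde{\gamma}$ would have to cross one of them, producing two distinct geodesics between a pair of points in the CAT(0) space $\widetilde{S}$. Your Gauss--Bonnet instinct is correct, but the holonomy enters through this cumulative side-angle excess over $q$ periods rather than through a count of tangent directions.
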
 

\begin{proof}[Proof:]

 It could happen that $l_m(\widetilde{\gamma}_0\cap\widetilde{\gamma})>0$ as in Figure~\ref{fig:krug4}. We first claim that $l_m(\widetilde{\gamma}_0 \cap \widetilde{\gamma})<$ $l_m(\gamma)$. To show this let $v$, $u$ be cone points at each end of $\widetilde{\gamma}_0 \cap \widetilde{\gamma}$. If $l_m(\widetilde{\gamma}_0 \cap \widetilde{\gamma}) \geq $ $l_m(\gamma)$ then since the translation length of $g$ and $h$ is equal to $l_m(\gamma)$, we have $g(v)=h(v)$ or $g^{-1}(v)=h(v)$ which is a contradiction as $\pi_1(S)$ acts freely on $\widetilde {S}$.\\

\begin{figure}[htb]
\centering
\includegraphics[height=1.8in]{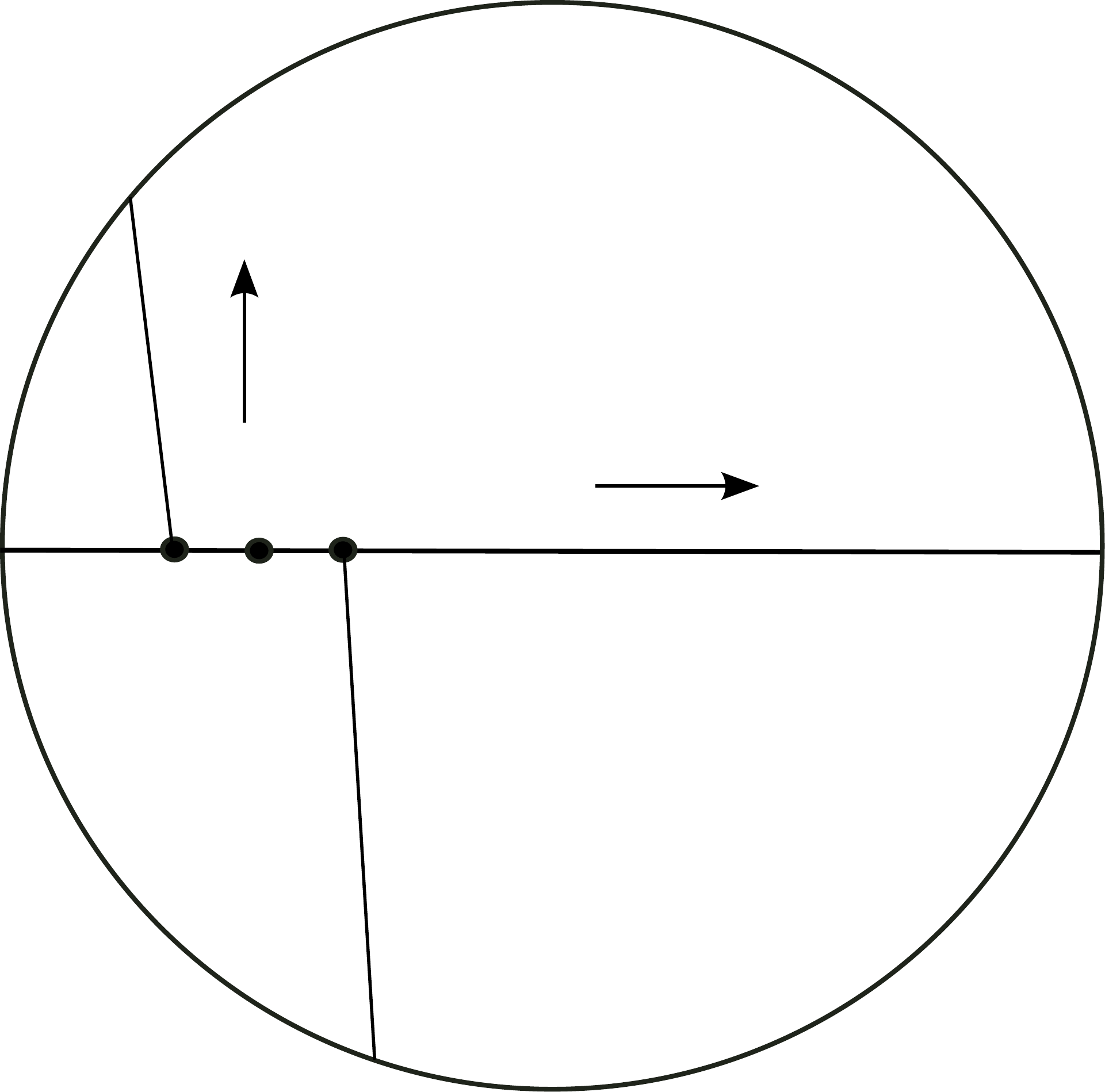}
\caption{Two lifts of $\gamma$ to the universal cover.}
\label{fig:krug4}
\begin{picture}(0,0)(0,0)
 	 \put(-38,55){$\widetilde{\gamma}_0$}
     \put(45,90){$\widetilde{\gamma}$}
      \put(10,115){$h$}
      \put(-33,123){$g$}
      \put(-40,105){$w$}
      \put(-50,93){$v$}
      \put(-23,93){$u$}
     \end{picture}
\end{figure}

Now let $v_1, \ldots , v_n$ be all different cone points along $[h^{(q+1)}w, h^{(q+2)}(w)) \subset \widetilde{\gamma}$   and let $\theta_i$, $i=1,\ldots n$, be the angles $\widetilde{\gamma}$ makes at $v_i$ all on one side of $\widetilde{\gamma}$, and let $\theta_i'$ be all the angles at $v_i$ on the other side. Observe that there is some $i$ so that $\theta_i>\pi$. Otherwise there is a small neighborhood of one side of $\widetilde {\gamma}$ with no cone points and we can find a geodesic homotopic to $\gamma$ with no cone points by doing a straight line homotopy as shown in Figure~\ref{fig:krug5}. Similarly, there is $j$ so that $\theta_j'> \pi$. \\ 

\begin{figure}[htb]
\centering
\includegraphics[height=1.5in]{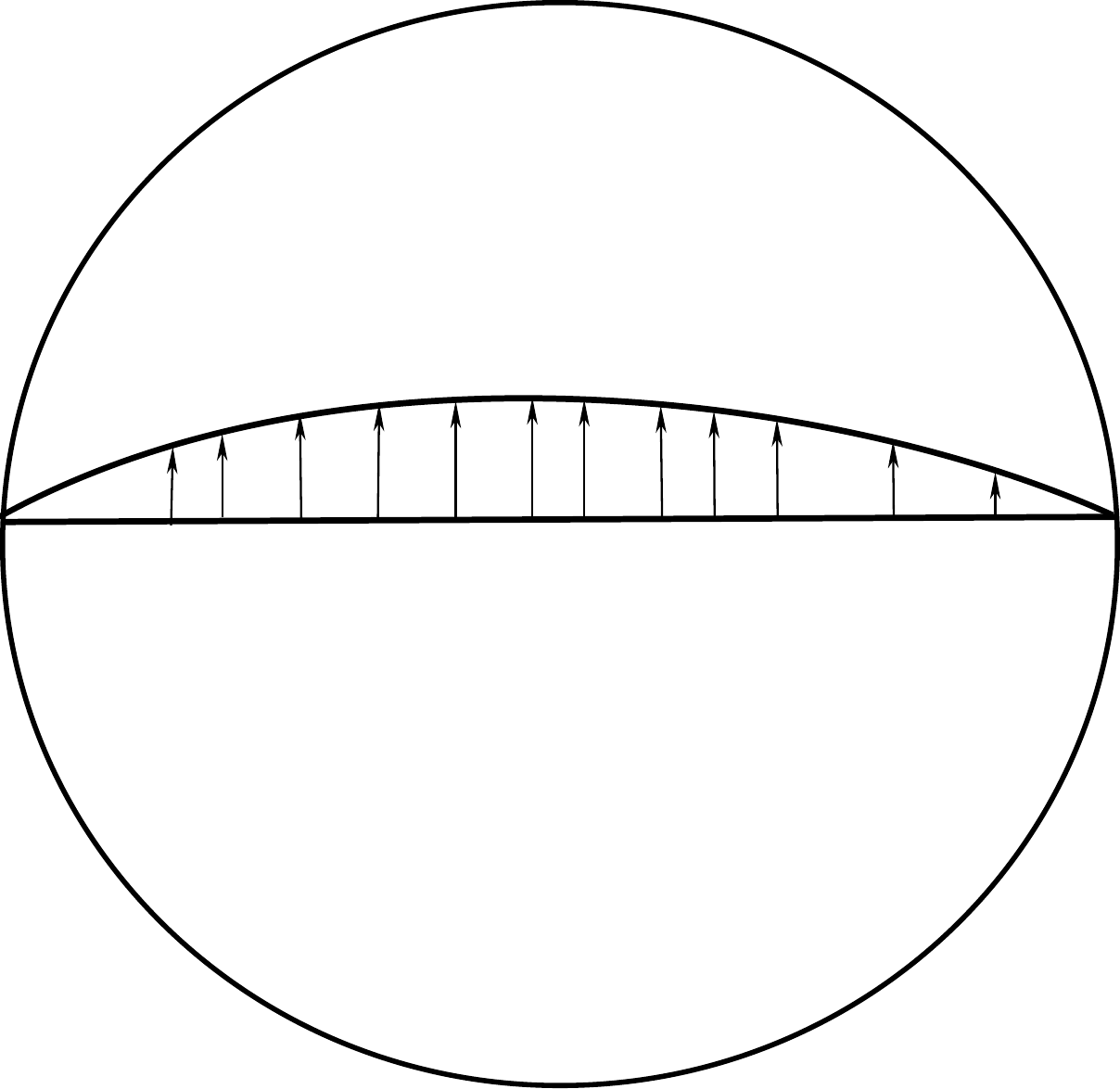}
\caption{Homotopy.}
\label{fig:krug5}
\end{figure}

Let $v=v_i$ with $\theta_i > \pi$ and $v'=v_j$ with $\theta_j '> \pi$. Let $\sigma$ be the continuation of the geodesic segment $[h^{(q+2)}(w),v]\subset \widetilde {\gamma}$ to a geodesic ray starting at $h^{(q+2)}(w)$  and at every cone point past $v$ having angle $\pi$ on the right. See Figure~\ref{fig:krug8}. We define $\sigma'$ similarly as the continuation of $[h^{(q+2)}(w),v']\subset \widetilde {\gamma}$ with angle $\pi$ on the left. \\

\begin{figure}[htb]
\centering
\includegraphics[height=2.5in]{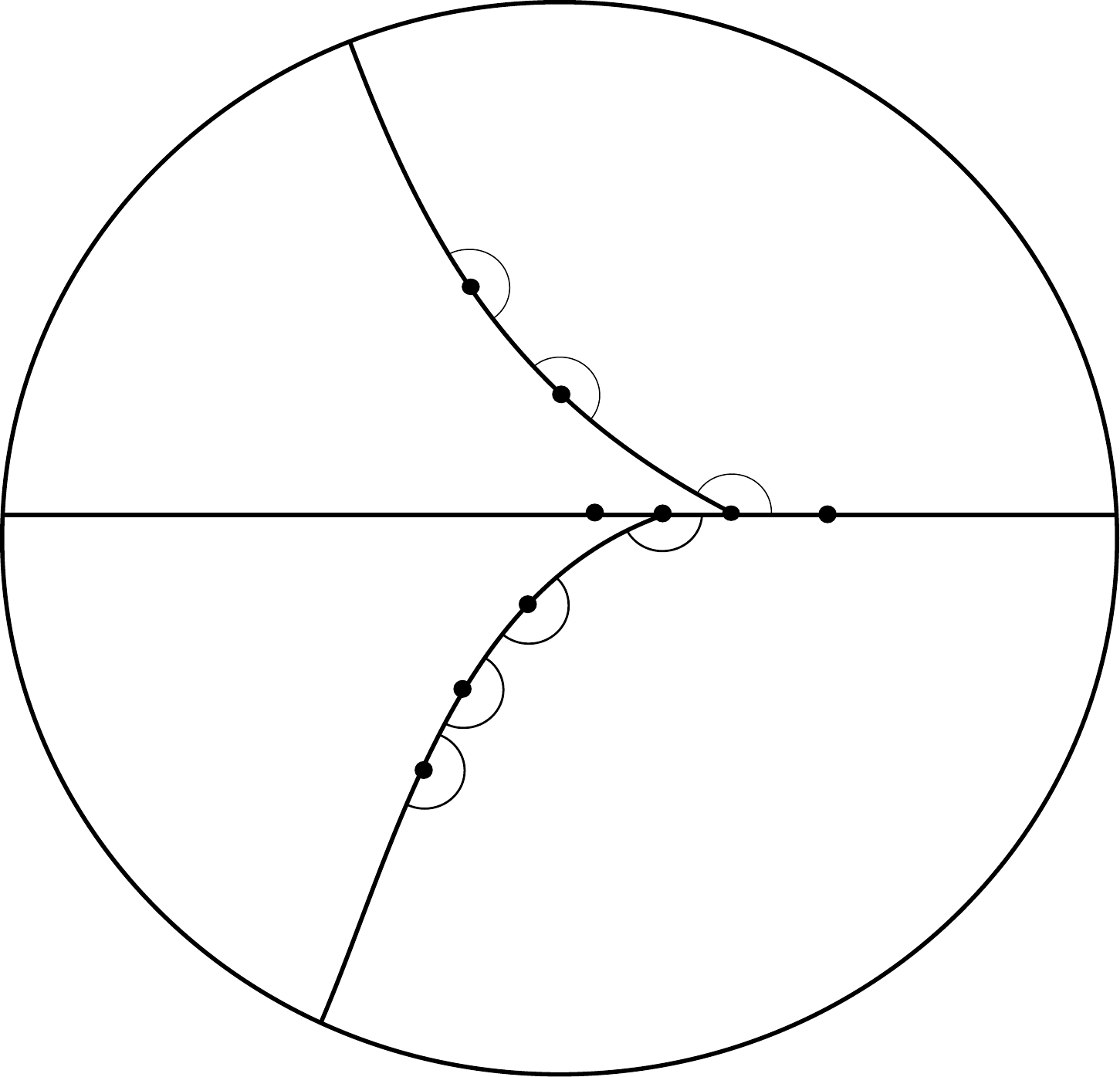}
\caption{Geodesic rays as continuations of the geodesic segment $\gamma$. }
\label{fig:krug8}
\begin{picture}(0,0)(0,0)
 	 
     \put(80,120){$\widetilde{\gamma}$}
     
     \put(25,123){$v$}
     \put(38,135){$h^{(q+2)}(w)$}
     
      \put(16,115){$\pi$}
      \put(2,105){$\pi$}
       \put(-8,95){$\pi$}
      \put(-13,82){$\pi$}
      
        \put(30,140){$\pi$}
       \put(7,155){$\pi$}
      \put(-5,170){$\pi$}
      
      \put(-41,200){$\sigma$}
      \put(-48,53){$\sigma'$}
     \end{picture}
\end{figure}

\emph{Claim:} The rays $\sigma$ and $\sigma'$ do not intersect $\widetilde{\gamma}_0$.\\ 

\emph{Proof:} Assume that $\sigma \cap \ \widetilde{\gamma}_0 \not= \emptyset$. Then $\widetilde{\gamma}$, $\widetilde{\gamma_0}$ and $\sigma$ form a triangle in $\widetilde{S}$. Denote the angles of the triangle by $\beta_1$, $\beta_2$ and $\beta_3$. See Figure~\ref{fig:krug9}.\\

\begin{figure}[htb]
\centering
\includegraphics[height=2.1in]{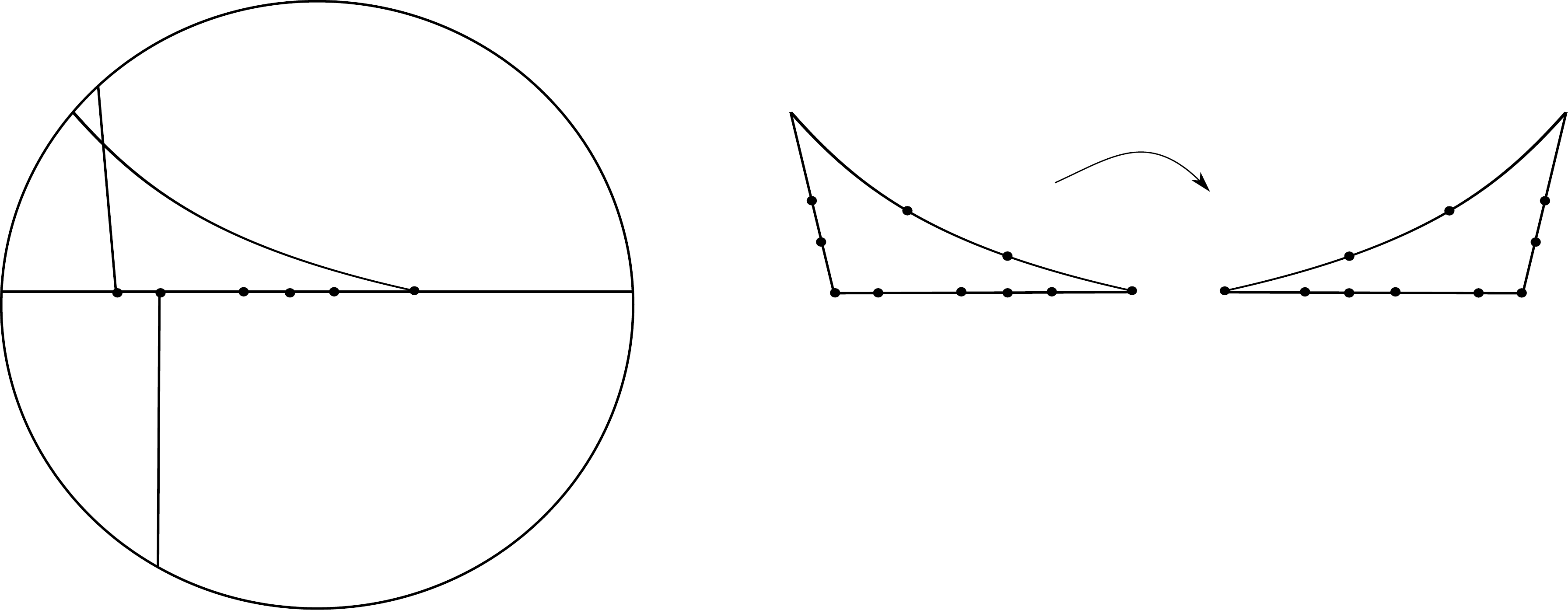}
\caption{Gluing two triangles to get a sphere.}
\label{fig:krug9}
\begin{picture}(0,0)(0,0)
 	 
     \put(-127,117){$\beta_1$}
      \put(-165,118){$\beta_2$}
      \put(-168,138){$\beta_3$}
      
        \put(45,118){$\beta_1$}
      \put(13,118){$\beta_2$}
      \put(7,140){$\beta_3$}
      
       \put(137,117){$\beta_1$}
      \put(172,118){$\beta_2$}
      \put(178,139){$\beta_3$}
     \end{picture}
\end{figure}

 Take two copies of the given triangle and glue them together to get a sphere $R$ with an induced Euclidean cone metric. See Figure~\ref{fig:krug9}. The Euler characteristic of $R$ is 2, and by the Gauss-Bonnet formula (see Proposition~\ref{prop:Gauss}) $$-2\pi \chi (R) = \displaystyle \sum_{x \in X} (c(x)-2\pi),$$ where $X$ is the set of all cone points of $R$. Three cone points come from the triangle vertices and they have cone angles $2\beta_l$, $l=1,2,3$. The rest come from cone points along the sides or the inside of the triangles and have cone angles $>2\pi$. In particular we have cone points $h^{-1}(v_j), \ldots, h^{-q}(v_j)$ with cone angles $2\theta_j$, $j=1,\ldots, n$. Therefore we get the following inequality:  
$$ -4\pi= \displaystyle \sum _{x\in X}(c(x)-2\pi) \geq \displaystyle \sum _{i=1}^q \sum_{j=1}^n(2\theta_j-2\pi) + (2(\beta_1 + \beta_2 + \beta_3)-6\pi).$$ It follows that
$$\pi-(\beta_1 + \beta_2 + \beta_3)\geq  \displaystyle \sum _{i=1}^q \sum_{j=1}^n (\theta_j-\pi).$$ 
The holonomy for every metric in $Flat (S, q)$ is a rotation through an angle of the form $\frac{2\pi}{q} k$, $k\in \mathbb{Z}$. Therefore we get $\displaystyle\sum_{j=1}^n(\theta_j-\pi)=\frac{2\pi}{q} k$ for some integer $k>0$, and so $\displaystyle\sum_{j=1}^n(\theta_j-\pi) \geq \frac{2\pi}{q}$.\\

It follows that $\displaystyle\sum _{i=1}^q\sum_{j=1}^n(\theta_j-\pi)= q \sum_{j=1}^n(\theta_j-\pi)\geq q \frac{2\pi}{q} =2\pi$, and thus: $$\pi-(\beta_1 + \beta_2 + \beta_3)\geq 2\pi$$
which is a contradiction. Therefore $\sigma \cap \ \widetilde{\gamma}_0 = \emptyset$. The same argument shows $\sigma' \cap \ \widetilde{\gamma}_0 = \emptyset$. This proves the claim.\\

Now we will show that every geodesic from $h^{(q+2)}(w)$ to $\widetilde{\gamma}_0$ has to share a positive length geodesic segment with  $\widetilde{\gamma}$.\\

There is a unique geodesic between any 2 distinct points in $\widetilde{S}$, since it is a $CAT(0)$ space. Let $\zeta$ be a geodesic from $h^{(q+2)}(w)$ to some point on $\widetilde{\gamma}_0$. Let $\sigma_0=\sigma-\widetilde{\gamma}$ and $\sigma'_0=\sigma'-\widetilde{\gamma}$. Assume $\zeta$ does not share a positive length geodesic segment of $\widetilde{\gamma}$. By the previous claim $\zeta$ must intersect one of $\sigma_0$ and $\sigma'_0$. See Figure~\ref{fig:krug10}. Without loss of generality assume that $A \in \zeta \cap \sigma_0$ is such a point. The path from $h^{(q+2)}(w)$ to  $v$ following the geodesic segments on $\widetilde{\gamma}$  and then from $v$ to $A$ along $\sigma_0$ is a geodesic by construction since all cone angles are $\geq \pi$ on both sides of the path. Since the initial arc of $\zeta$ to $A$ provides a different geodesic, this contradicts the uniqueness of geodesics in $\widetilde{S}$ and we can see that $\zeta$ cannot cross either of $\sigma_0$ or $\sigma'_0$ and hence must share a positive length segment with  $\widetilde{\gamma}$.\\

\begin{figure}[htb]
\centering
\includegraphics[height=2.3in]{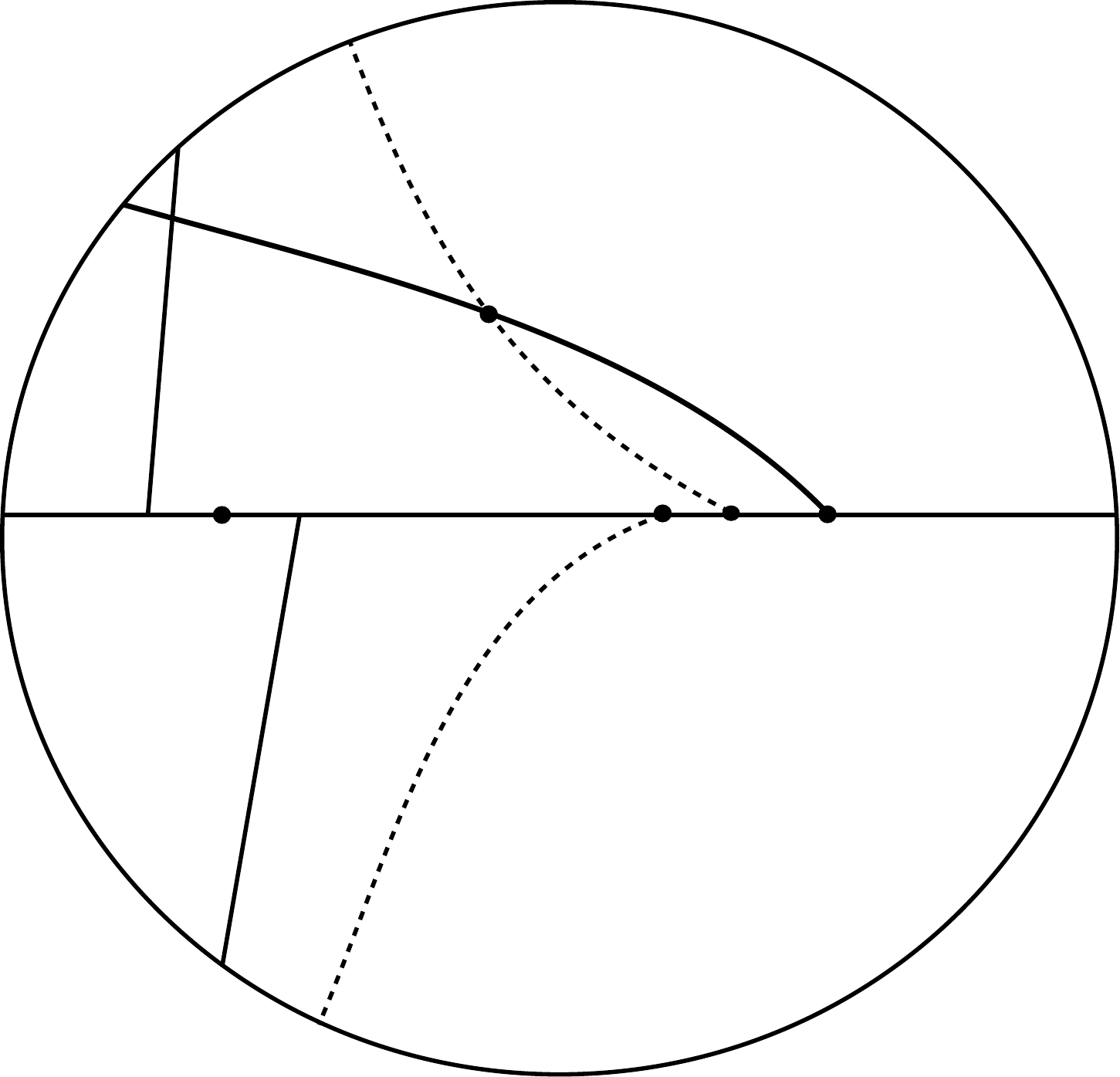}
\caption{A geodesic segment from $\widetilde\gamma$ to $\widetilde \gamma_1$.}
\label{fig:krug10}
\begin{picture}(0,0)(0,0)
 	 \put(-58,113){$w$}
     \put(23,113){$v$}
      \put(35,113){$h^{(q+2)}(w)$}
      \put(-10,157){$A$}
      \put(-81,129){$\widetilde{\gamma}$}
      \put(-61,63){$\widetilde{\gamma}_0$}
      \put(-31,49){$\sigma'_0$}
      \put(-26,190){$\sigma_0$}
      \put(5,126){$v'$}
     \end{picture}
\end{figure}

For the same reason the geodesics from $g^{(q+2)}(w)$ or $g^{-(q+2)}(w)$ to $\widetilde{\gamma}$ must share a positive length segment with  $\widetilde{\gamma}_0$. Therefore every geodesic from $h^{(q+2)}(w)$ to a point on $\widetilde{\gamma}_0$ meets $\widetilde{\gamma}_0$ on $[g^{-(q+2)}(w),g^{(q+2)}(w)]$.
\end{proof}

Now we can prove the main theorem.

\begin{proof}[Proof of Theorem~\ref{thm:thmq}:] 

 Let $\gamma \in \mathcal{C}(S)$ be as in Lemma~\ref{lem:lem1}. Let $\widetilde {\gamma}$ and $\widetilde {\gamma}_0$ be lifts of $\gamma$ with endpoints that link. Suppose the stabilizer of $\widetilde{\gamma}$ is generated by $h$.  Let $\widetilde{\gamma}_{-1}$ and $\widetilde{\gamma}_1$ be defined by $\widetilde{\gamma}_i = h^{i(q+2)}(\widetilde{\gamma_0})$, ${i=\pm 1}$. See Figure~\ref{fig:krug3}. Denote  $h_{-1}$ and $h_1$ conjugate elements of $\pi_1(S)$ that generate the stabilizers of  $\widetilde{\gamma}_{-1}$ and $\widetilde{\gamma}_1$ respectively.\\

\begin{figure}[htb]
\centering
\includegraphics[height=1.8in]{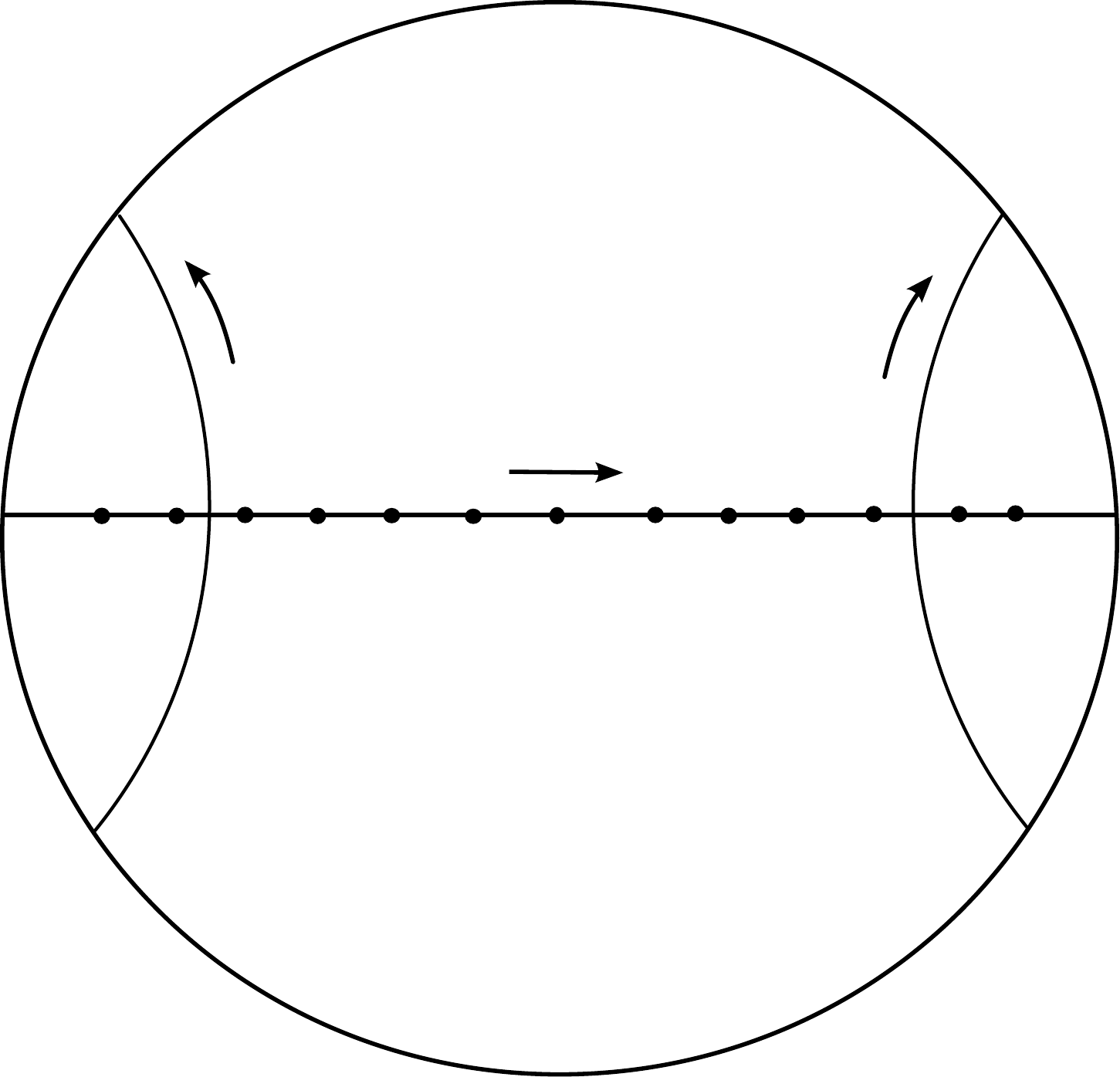}
\caption{Lifts of $\gamma$.}
\label{fig:krug3}
\begin{picture}(0,0)(0,0)
 	 \put(-51,63){$\widetilde{\gamma}_{-1}$}
 	 \put(43,65){$\widetilde{\gamma}_{1}$}
     \put(-4,113){$h$}
     \put(-40,128){$h_{-1}$}
     \put(30,128){$h_1$}
     \end{picture}
\end{figure}

Let $F(a,b)$ denote the free group on two generators $a$ and $b$. Define $\phi: F(a,b)\rightarrow \pi_1(S)$ by $a\rightarrow h_{-1}^{2(q+2)}$ and $b\rightarrow h_{1}^{2(q+2)}$.\\

\noindent \emph{Claim:} The $\phi$--images of the words $$w_0=(ab)^k, \, w_1=(ab)^{k-1}(ab^{-1}), \ldots, w_{k-1}=(ab)(ab^{-1})^{k-1}$$ represent distinct elements in $\mathcal{C}(S)$ and have the same length in every metric $m\in Flat(S,q)$, $q \in \mathbb{Z}_+$.\\

Let $m\in Flat(S,q)$. Let $(\widetilde{\gamma}_i)_m$, $(\widetilde{\gamma})_m$ be corresponding lifts of the $m$-geodesic representatives of $\gamma$. We have $(\widetilde{\gamma}_1)_m=h^{2(q+2)}((\widetilde{\gamma}_{-1})_m)$. Let $z'$ be a point on $(\widetilde{\gamma})_m$ so that $(\widetilde{\gamma}_i)_m \cap (\widetilde{\gamma})_m \in [h^{i(q+1)}(z'), h^{i(q+2)}(z')]$, ${i=\pm 1}$. From Lemma~\ref{lem:lem2} we know that every geodesic from $z'$ to $(\widetilde{\gamma}_i)_m$, $i=\pm 1$, has to run along positive line segment of $(\widetilde{\gamma})_m$ and meet $(\widetilde{\gamma}_i)_m$ on $[h_i^{-(q+2)}(z'), h_i^{(q+2)}(z')]$.\\

\begin{figure}[htb]
\centering
\includegraphics[height=2in]{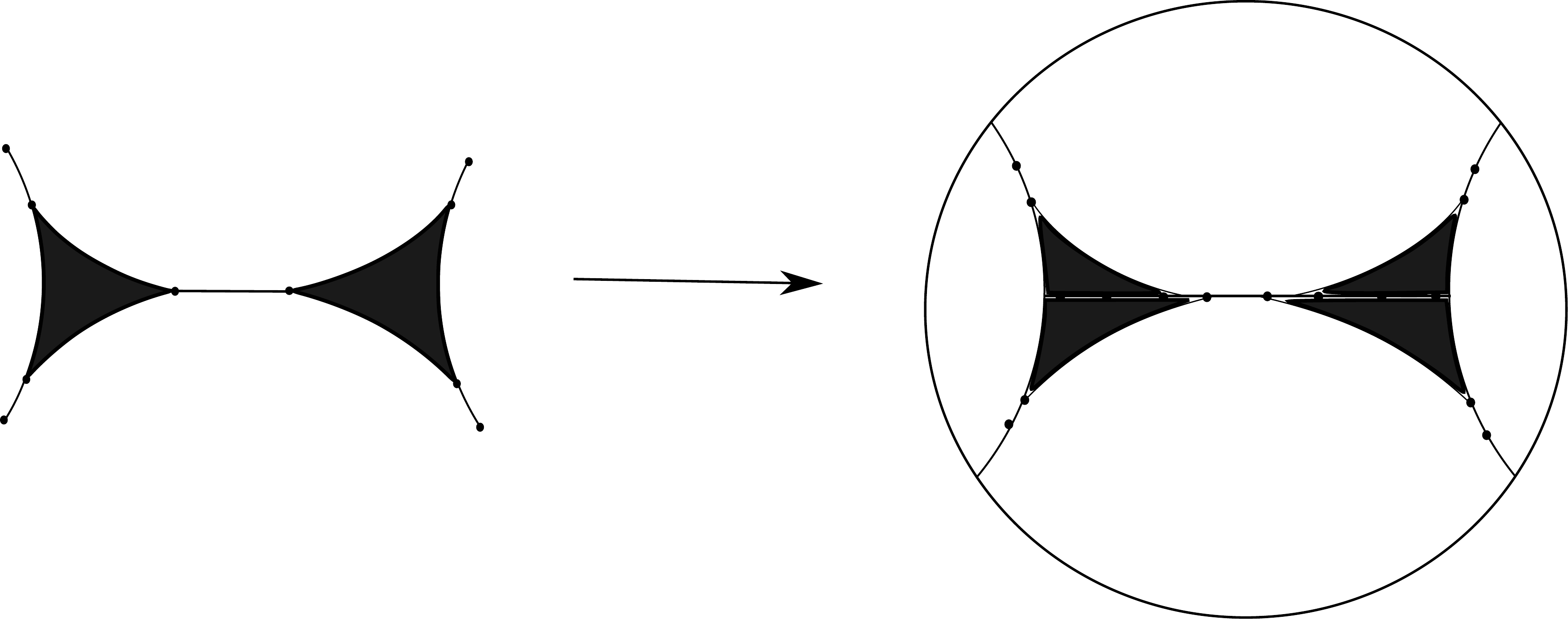}
\caption{The $m$-convex hull of $(\widetilde{\gamma}_{-1})_m$ and $(\widetilde{\gamma}_1)_m$ on the right, and the 2-complex $\Gamma'$ on the left.}
\label{fig:krug12}
\begin{picture} (-72,-13) (-72,-13)
\put(-52,140){$h_{-1}^{q+2}(w_{-1})$}
\put(58,140){$h_1^{q+2}(w_1)$}
\put(-52,78){$h_{-1}^{-(q+2)}(w_{-1})$}
\put(60,75){$h_{1}^{-(q+2)}(w_{1})$}

\put(-305,135){$e_{-1}^+$}
\put(-307,87){$e_{-1}^-$}
\put(-180,87){$e_{1}^-$}
\put(-180,135){$e_{1}^+$}
\put(-240,117){$e_0$}
\end{picture}
\end{figure}

It follows that the $m$--convex hull of $(\widetilde{\gamma}_{-1})_m$ and $(\widetilde{\gamma}_1)_m$ consists of $(\widetilde{\gamma}_{-1})_m\cup(\widetilde{\gamma}_1)_m$ together with an arc of $(\widetilde{\gamma})_m$ and two (possibly degenerate) triangles. Choose a point $w_i\in (\widetilde{\gamma})_m\cap(\widetilde{\gamma}_i)_m$ for $i=\pm1$ and consider the point $h_i^{\pm(q+2)}(w_i)$ along $(\widetilde{\gamma}_i)_m$. Let $\Gamma'$ be the metric $2$-complex with two (possibly degenerate) triangles and five edges determined by $h_i^{\pm(q+2)}(w_i)$ as shown in the Figure~\ref{fig:krug12} and view this as mapping into $\widetilde{S}$. Let $\Gamma$ denote the metric $2$-complex obtained by gluing $h_i^{-(q+2)}(w_i)$ to $h_i^{(q+2)}(w_i)$, for $i=\pm1$. The inclusion $\Gamma' \rightarrow \widetilde{S}$ descends to a locally convex, local isometry $f:\Gamma \to S$.  Identifying $\pi_1(\Gamma) = F(a,b)$ as shown in Figure 26 we have $f_* = \phi$.\\


\begin{figure}[htb]
\centering
\includegraphics[height=2.8in]{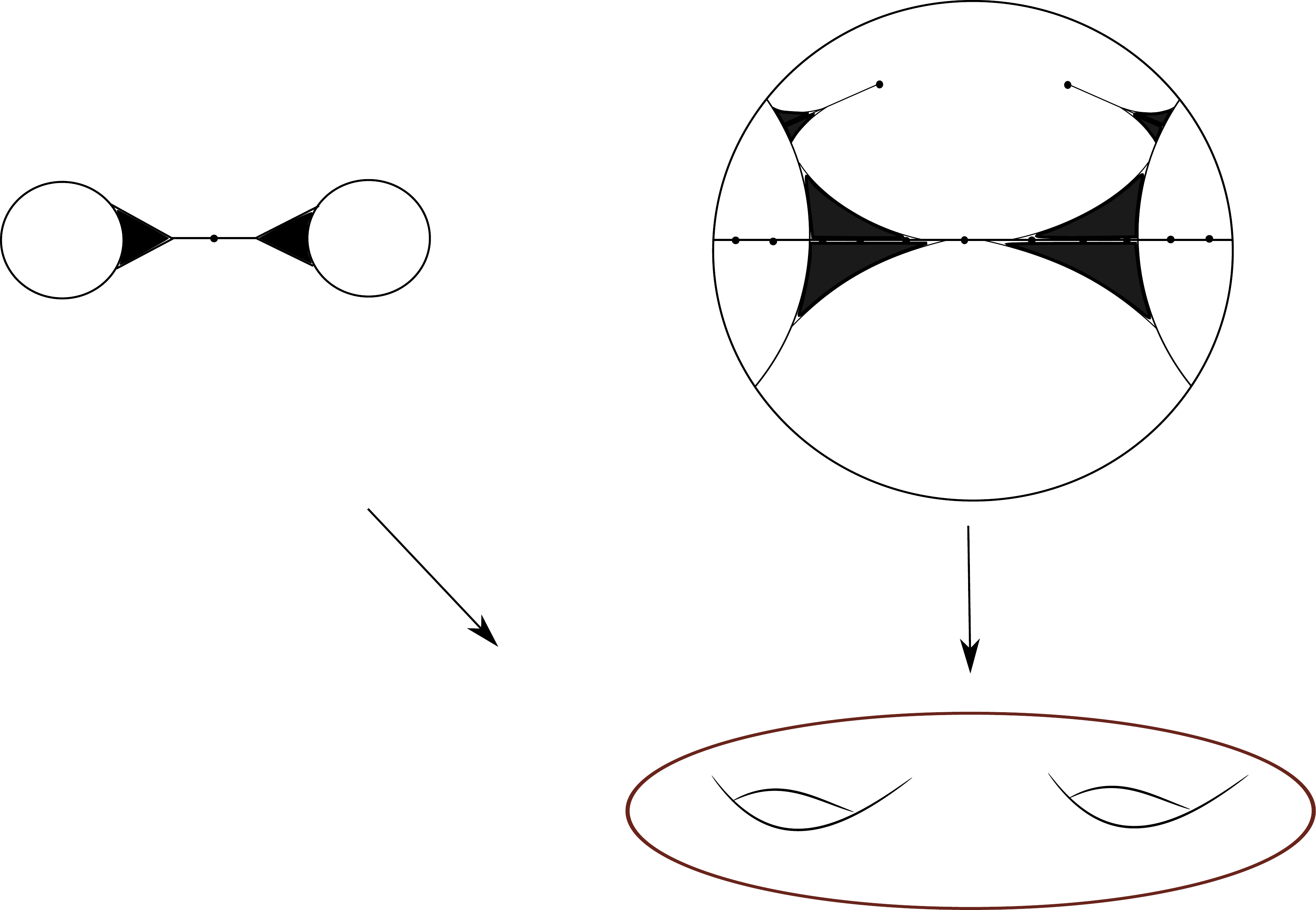}
\caption{Constructing 2-complex $\Gamma$ from the curve $\gamma$.}
\label{fig:krug11}
\begin{picture} (0,0) (0,0)
\put(-135,160){$a$}
\put(-65,160){$b$}
\put(-100,215){$\Gamma$}
\put(120,230){$\widetilde{S}$}
\put(120,30){$S$}
\put(-65,100){$f$}

\end{picture}
\end{figure}

By the previous construction $[f_*(a)]=[f_*(b)]=\pm2(q+2)[\gamma]$ in $H_1(S,\mathbb{Z})$. It follows that $[f_*(w_j)]=\pm2(q+2)(2k-2j)[\gamma]$. By construction $[\gamma]\not= 0$, thus $\{f_*(w_j)\}_{j=0}^{k-1}$ are distinct classes of non-null homotopic curves. Since $f$ is a locally convex, local isometry, by measuring lengths on $\Gamma$ instead of $S$ we can see $l_m(f_*(w_j))=l_m(f_*(w_i))$ for every $i,j=0,\ldots,k-1$.
\end{proof}

\begin{cor} Let $q_1, q_2 \in \mathbb{Z}_+$. If $q_1|q_2$ then $\equiv_{q_2} \, \, \Rightarrow \, \, \equiv_{q_1}$. The reverse implication is not true in general.\end{cor}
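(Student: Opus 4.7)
The plan is to separate the corollary into its two assertions. For the forward implication, I would appeal to the observation made in Section~\ref{sec:Background} that $q_1 \mid q_2$ implies $Flat(S,q_1) \subseteq Flat(S,q_2)$, since $\rho_{2\pi/q_1} \in \langle \rho_{2\pi/q_2}\rangle$ precisely when $q_1 \mid q_2$. So if $\gamma \equiv_{q_2} \gamma'$, meaning $l_m(\gamma) = l_m(\gamma')$ for every $m \in Flat(S,q_2)$, then a fortiori $l_m(\gamma) = l_m(\gamma')$ for every $m$ in the subset $Flat(S,q_1)$, giving $\gamma \equiv_{q_1} \gamma'$. This is the same pattern of argument already used to derive the corollary of Theorem~\ref{thm:12}.

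For the failure of the reverse, I would take $q_1 = 1$, which divides every $q_2 \in \mathbb{Z}_+$. Fix any two distinct curves $\gamma,\gamma' \in \mathcal{C}(S)$ with $\gamma \equiv_h \gamma'$; such a pair exists by the Horowitz--Randol construction. By Theorem~\ref{thm:clein2} we have $\gamma \equiv_2 \gamma'$, and Theorem~\ref{thm:12} then yields $\gamma \equiv_1 \gamma'$. On the other hand, the corollary immediately preceding this one guarantees that $\gamma \not\equiv_q \gamma'$ for all but finitely many $q \in \mathbb{Z}_+$. Choosing any such $q_2$ with $\gamma \not\equiv_{q_2} \gamma'$ (for instance any sufficiently large $q_2$), we have $q_1 \mid q_2$ and $\gamma \equiv_{q_1}\gamma'$ but $\gamma \not\equiv_{q_2}\gamma'$, so the implication $\equiv_{q_1} \Rightarrow \equiv_{q_2}$ fails.

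Neither half of the argument presents a real obstacle: the forward direction is an immediate inclusion of domains of metrics, and the counterexample is assembled directly from results already established earlier in the paper (Theorems~\ref{thm:clein2} and~\ref{thm:12} together with the previous corollary derived from Corollary~\ref{cor:infty2}). If one wanted a counterexample with a prescribed divisor $q_1 > 1$ rather than just $q_1 = 1$, the same recipe still works: any curve pair with $\gamma \equiv_{q_1}\gamma'$ and $\gamma \neq \gamma'$ produced by Theorem~\ref{thm:thmq} can be $q_1$-equivalent for only finitely many $q_2$ above $q_1$ by Corollary~\ref{cor:infty2}, so choosing $q_2$ to be a suitably large multiple of $q_1$ outside that finite set provides the counterexample.
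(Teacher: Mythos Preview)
Your proof is correct. The forward direction is identical to the paper's. For the failure of the reverse, the paper argues by contradiction: if $\equiv_{q_1} \Rightarrow \equiv_{q_2}$ held for all $q_1 \mid q_2$, then taking any $q$ and a pair $\gamma \neq \gamma'$ with $\gamma \equiv_q \gamma'$ from Theorem~\ref{thm:thmq}, one would have $\gamma \equiv_{q^i} \gamma'$ for all $i$, contradicting Corollary~\ref{cor:infty2} applied to the sequence $\{q^i\}$. Your primary counterexample instead fixes $q_1 = 1$ and routes through $h$-equivalence (Horowitz--Randol, Theorem~\ref{thm:clein2}, Theorem~\ref{thm:12}) together with the corollary at the end of Section~\ref{sec:infinity}; this is a slightly longer path but perfectly valid. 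Your final paragraph, handling arbitrary $q_1$ via Theorem~\ref{thm:thmq} and Corollary~\ref{cor:infty2}, is exactly the paper's argument rephrased constructively rather than by contradiction. One small remark: the corollary you invoke is not literally ``immediately preceding'' this one in the text (Theorem~\ref{thm:thmq} and its lemmas intervene), so it would be clearer to cite it explicitly.
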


\begin{proof}[Proof:] If $q_1|q_2$ then $Flat(S, q_1)\subset Flat(S,q_2)$. Thus the first part of the statement follows. \\
 \indent Assume the reverse statement is true for all $q_1|q_2$. Let $q \in \mathbb{Z}_+$. Then $\equiv_q \, \, \Rightarrow \, \, \equiv_{q^i}$ for every positive integer $i$. By Theorem~\ref{thm:thmq} there are two distinct curves $\gamma, \gamma'\in\mathcal{C}(S)$ so that  $\gamma \equiv_q \gamma'$. By assumption we get $\gamma \equiv_{q^i}\gamma'$, for all $i$. This is in contradiction to Theorem~\ref{thm:polygon2} if we take our infinite sequence to be $\{q^i\}_{i=1}^\infty$. 
 \end{proof}

\begin{thm}\label{thm:qnoth} For every $q \in \mathbb{Z}_+$, there exist $\gamma, \gamma'\in \mathcal{C}(S)$ so that  $\gamma\equiv_q\gamma'$ but  $\gamma\not\equiv_h\gamma' $.\end{thm}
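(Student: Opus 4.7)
The plan is to reuse the two-complex construction from the proof of Theorem~\ref{thm:thmq}: taking $k=2$ and the same words $w_0=(ab)^2$ and $w_1=(ab)(ab^{-1})$, I would set $\gamma_0=f_*(w_0)$ and $\gamma_1=f_*(w_1)$, so that $\gamma_0\equiv_q\gamma_1$ comes for free from Theorem~\ref{thm:thmq}. It then remains to show that this particular pair satisfies $\gamma_0\not\equiv_h\gamma_1$. Note that by the homological computation in the proof of Theorem~\ref{thm:thmq}, $\gamma_0$ and $\gamma_1$ represent distinct classes in $H_1(S,\mathbb{Z})$, which is already a good omen.

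To produce a hyperbolic metric separating their lengths, I would work at the level of the holonomy representation $\rho_m:\pi_1(S)\to PSL(2,\mathbb{R})$: recall that $l_m(\gamma)$ is determined by $|\mathrm{tr}\,\rho_m(\gamma)|$, so it suffices to find one $m$ with $|\mathrm{tr}\,\rho_m(w_0)|\ne|\mathrm{tr}\,\rho_m(w_1)|$. Since $a=h_{-1}^{2(q+2)}$ and $b=h_{1}^{2(q+2)}$ are conjugate in $\pi_1(S)$ (both being conjugates of a suitable power of the generator of the stabilizer of $\widetilde{\gamma}_0$), we have $\mathrm{tr}\,\rho_m(a)=\mathrm{tr}\,\rho_m(b)=:t$. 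Writing $u=\mathrm{tr}\,\rho_m(ab)$ and applying the Fricke identity $\mathrm{tr}(XY)+\mathrm{tr}(XY^{-1})=\mathrm{tr}(X)\mathrm{tr}(Y)$ a few times reduces $\mathrm{tr}\,\rho_m(w_0)$ and $\mathrm{tr}\,\rho_m(w_1)$ to explicit polynomials in $(t,u)$, namely
\[
\mathrm{tr}\,\rho_m(w_0)=u^2-2,\qquad \mathrm{tr}\,\rho_m(w_1)=-u^2+t^2u-t^2+2.
\]
Neither their difference $2u^2-t^2u+t^2-4$ nor their sum $t^2(u-1)$ vanishes identically in $(t,u)$, so the set of pairs $(t,u)$ for which $|\mathrm{tr}\,\rho_m(w_0)|=|\mathrm{tr}\,\rho_m(w_1)|$ lies on a proper real-algebraic subset of $\mathbb{R}^2$.

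The main obstacle, and the step I would take most care over, is to show that as $m$ varies over Teichm\"uller space the image of $m\mapsto(t,u)$ is not trapped on this bad algebraic subset. This should follow from standard Fenchel--Nielsen or Fricke-coordinate considerations: $t$ and $u$ are real-analytic length functions on $\mathcal{T}(S)$ associated to two distinct primitive conjugacy classes, and by choosing a pants decomposition adapted to a representative of $h$ one can twist-deform $m$ so that $u$ changes while $t$ is essentially unchanged, and symmetrically vary $t$ while essentially fixing $u$. Hence $(t,u)$ sweeps out an open subset of $\mathbb{R}^2$ and we may pick some $m$ off the bad locus; for this metric $l_m(\gamma_0)\ne l_m(\gamma_1)$, proving $\gamma_0\not\equiv_h\gamma_1$ as required.
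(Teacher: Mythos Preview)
Your approach is valid in outline, but it is a much longer route than the one the paper takes, and the longer route leaves you with a step (the last paragraph) that is only sketched.

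The paper's proof is essentially one line: it quotes from \cite{clein1} the fact that if $\gamma\equiv_h\gamma'$ then $\gamma$ and $\gamma'$ can be oriented so that $[\gamma]=[\gamma']$ in $H_1(S,\mathbb{Z})$. You already computed, in the proof of Theorem~\ref{thm:thmq}, that $[f_*(w_j)]=\pm 2(q+2)(2k-2j)[\gamma]$ with $[\gamma]\neq 0$; for $k=2$ this gives $[\gamma_0]=\pm 8(q+2)[\gamma]$ and $[\gamma_1]=\pm 4(q+2)[\gamma]$, which are distinct even up to sign. Hence $\gamma_0\not\equiv_h\gamma_1$ immediately. In other words, your ``good omen'' is the whole proof once you invoke the homological obstruction from \cite{clein1}; there is no need to touch trace identities or Teichm\"uller space at all.

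Your alternative via Fricke identities is correct as far as the algebra goes (the expressions $u^2-2$ and $-u^2+t^2u-t^2+2$ are right, and neither the sum nor the difference vanishes identically), so the method would succeed. But the final step---showing that the map $m\mapsto (t,u)$ is not trapped in the bad real-algebraic locus---is genuinely where the work lies, and ``twist so that $u$ moves while $t$ is essentially unchanged'' needs an honest argument (e.g.\ a nonvanishing Jacobian computation in Fenchel--Nielsen coordinates, or an explicit degeneration). This is doable, but it is exactly the kind of analytic labor that the homology argument bypasses entirely. If you want to keep your approach self-contained (i.e.\ without citing the homology fact from \cite{clein1}), you should at least indicate concretely which twist parameter moves $u$ and why it does not move $t$.
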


\begin{proof}[Proof:] Let $\gamma$ and $\gamma'$ be from the construction in Theorem~\ref{thm:thmq}. If $\gamma\equiv_h\gamma'$ then $\gamma$ and $\gamma'$ can be oriented so that they represent the same homology class \cite{clein1}.  In our construction $\gamma$ and $\gamma'$ have different homology representatives, thus those curves can not be $h$-equivalent. 
\end{proof}


\section{Punctured surfaces}
\label{sec:punctures}

In this section we will describe the main results for punctured surfaces. Some of the results in this case are actually stronger, and in general the proofs go through with little change.  The main technical difference is in the structure of geodesics, which is more complicated in this setting.\\

Let $\widehat{S}$ denote a closed, oriented surface of genus $g$. Let $S$ be the surface obtained by removing a finite set of points from  $\widehat{S}$.\\

Define $Flat(S)$ to be the set of metrics on $S$ with the following properties:
\begin{enumerate}
\item[(i)] the metric completion of $m\in Flat(S)$ is a Euclidean cone metric $\widehat{m}$ on $\widehat{S}$,

\item[(ii)] cone points of $\widehat{m}$ contained in $S \subset \widehat{S}$ have cone angles $\geq 2\pi$.
\end{enumerate}

For $q\in\mathbb{Z}_+$, define $Flat(S,q)$ to be the set of metrics  $m\in Flat(S)$, such that the holonomy around every loop in $S\setminus\{$cone points$\}$ is in $\langle \rho_{\frac{2\pi}{q}}\rangle$. Cone angles of the cone points on $\widehat{S}$ are therefore of the form $k\frac{2\pi}{q}, \, k>0$. Cone angles of points in $S$ have $k\geq q$, but this is not required for points in $\widehat{S}\setminus S$. One can show that metrics in $Flat(S,q)$ are precisely those that come from meromorphic $q$-differentials on $\widehat{S}$, all of whose poles (if any) are contained in $\widehat{S}\setminus S$ and have order at most $q-1$. For $k < q$ a cone point with cone angle $k\frac{2\pi}{q}$ is a pole of order $q-k$.\\

Let $\mathcal{C}(S)$ be a set of all homotopy classes of non-trivial, non-peripheral curves on $S$. We will define the length of  $\gamma\in \mathcal{C}(S)$ in the metric $m\in Flat(S)$ as the infimum of all lengths of representatives of $\gamma$ in $m$: $$l_m(\gamma)=\inf_{\sigma\in \gamma}length_m(\sigma).$$  All of the equivalence relations on $\mathcal{C}(S)$ can be defined as before (see Section~\ref{sec:intro}).\\

Because $S$ is incomplete, there may not be a geodesic representative in $S$ for every $\gamma \in \mathcal{C}(S)$.  On the other hand, given a sequence of closed curves $\gamma_n$ representing $\gamma$, by the Arzela-Ascoli Theorem we can extract a limiting curve $\widehat \gamma$ {\em in $\widehat{S}$}.  Unfortunately, the homotopy class of $\gamma$ can not be recovered from $\widehat{\gamma}$, but this can be remedied by working in the universal covering as we now explain.\\

Let $p \colon \widetilde{S} \to S$ denote the universal covering.  Write $\breve{S}$ to denote the metric completion of $\widetilde{S}$ (with respect to some $m \in Flat(S,q)$, for some $q$), which is a $CAT(0)$ space, and note that the action of $\pi_1(S)$ on $\widetilde{S}$ extends to an action of $\pi_1(S)$ on $\breve{S}$.  We further observe that the universal covering extends to the completion $p \colon \breve{S} \to \widehat{S}$, though this is no longer a covering map.  Using this projection, we can see that any two metrics $m,\, m'$ give rise to metric completions which are homeomorphic by a homeomorphism which is the identity on $\widetilde{S}$, and so we view $\breve{S}$ as independent of the metric $m$.\\

Given $\gamma \in \mathcal{C}(S)$, we let $h_\gamma \in \pi_1(S)$ denote an element which represents the conjugacy class determined by $\gamma$ (after arbitrarily choosing an orientation).  This element has a geodesic axis $\breve{\gamma}$.  It may enter and exit a completion point (i.e.~a point of $\breve{S} \setminus \widetilde{S}$), but when it does so, the two geodesic sub--rays emanating from that point have one well--defined angle which must be greater than or equal to $\pi$ (see Figure~\ref{fig:punctured}).  Composing with $p$ produces a geodesic $\widehat \gamma = p \circ \breve{\gamma}$ as in the previous paragraph, but we note that the axis does determine the homotopy class $\gamma$ uniquely.   See \cite{rafi} for this discussion in the case $q=2$.\\

\begin{figure}[htb]
\centering
\includegraphics[height=2.5in]{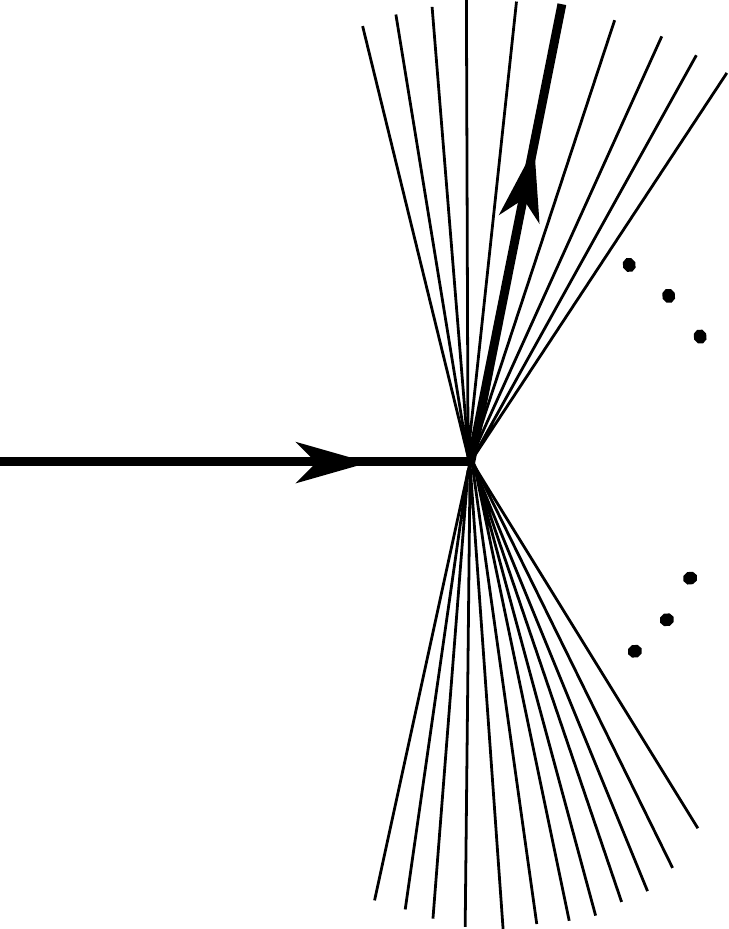}
\caption{A segment of a geodesic in the universal cover through a completion point. Every other ray emanating from the point in this figure forms another geodesic with the ray merging into that point since the angle is at least $\pi$.}
\label{fig:punctured}
\begin{picture} (-1,-12) (-1,-12)
\put(5,147){$\pi$}
\put(5,132){$\pi$}

\end{picture}
\end{figure}

It will be useful to choose representatives $\gamma_n$ of $\gamma$ constructed from $\breve{\gamma}$, which we do as follows.  Let $\{\epsilon_n\}_{n=1}^\infty$ be any sequence of positive real numbers limiting to $0$.  For each point $x \in \widehat{S} \setminus S$, consider the $\epsilon_n$--ball $B_{\epsilon_n}(x)$ about $x$ in $\widehat{S}$.  We assume that all $\epsilon_n$ are sufficiently small so that each ball $B_{\epsilon_n}(x)$ is isometric to some cone with cone angle $c(x) > 0$ as in Section~\ref{sec:Background1}, and for any two $x,x' \in \widehat{S} \setminus S$, $B_{\epsilon_n}(x) \cap B_{\epsilon_n}(x') = \emptyset$.  There is an $h_\gamma$--invariant path $\widetilde \gamma_n$ in $\widetilde{S}$ that follows $\breve{\gamma}$ in the complement of the preimage of these balls, and monotonically traverses the boundary of the preimage the balls between arcs in the complement.  See Figure~\ref{fig:puncturedgeod}.  Composing with $p$ we obtain a sequence of representatives $\gamma_n = p \circ \widetilde \gamma_n$ in $S$ of $\gamma$ with length limiting to the translation length of $h_\gamma$, which is precisely $l_m(\gamma)$.\\

\begin{figure}[htb]
\centering
\includegraphics[height=3.7in]{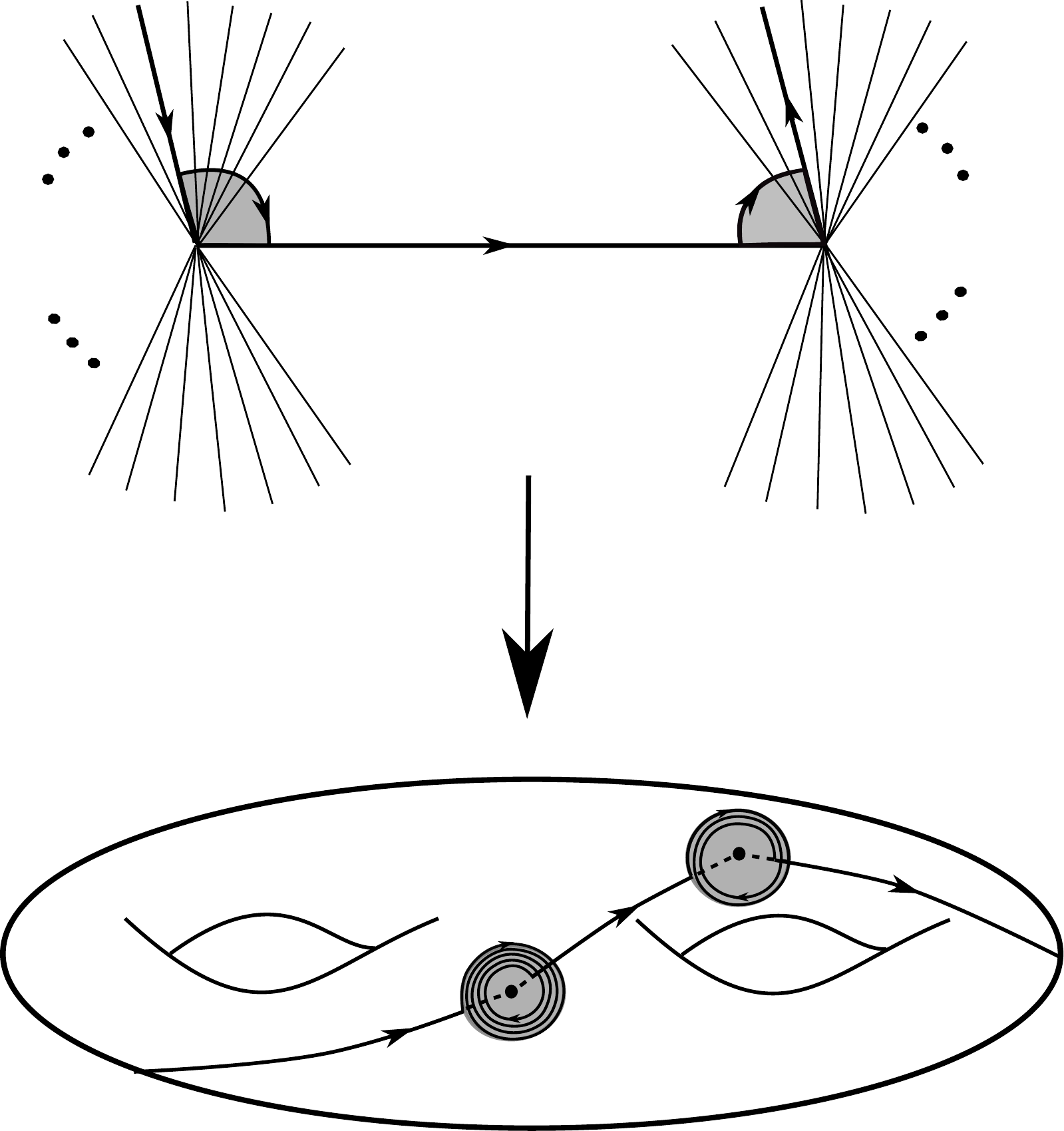}
\caption{A segment of a curve $\gamma_n$ in $S$ approximating a geodesic through cone points in $\widehat{S}$ and its preimage in $\breve{S}$. }
\label{fig:puncturedgeod}
\begin{picture}(0,0) (0,0)
\put(130,300){$\breve{S}$}
\put(130,115){$\widehat{S}$}
\put(-60,72){$\gamma_n$}
\put(5,185){$p$}
\end{picture}
\end{figure} 

We can view $\gamma_n$ as a concatenation of geodesic segments between balls $\{B_{\epsilon_n}(x)\}_{x \in \widehat{S} \setminus S}$ which makes some number of ``turns'' around the boundaries of the balls in-between segments.  Here we can formally define the {\em turn} of $\gamma_n$ as it passes around one of the balls $B_{\epsilon_n}(x)$ to be the positive real number which is the length of the path around the boundary of $B_{\epsilon_n}(x)$, divided by the length of the boundary of $B_{\epsilon_n}(x)$ (which is precisely $c(x) \cdot \epsilon_n$).  This is independent of $n$, and in this way we can think of the turn as a number associated to the geodesic representative.  This also provides a method for constructing geodesics:  a concatenation of geodesic segments between boundary of balls $\{B_{\epsilon_n}(x)\}$ followed by a monotone path around the boundary of the ball making $k$ turns, so that $k\cdot c(x)\geq \pi$, determines a geodesic by letting $n \to 0$. The point is that every time the limiting path of the lifting to $\widetilde{S}$ in $\breve{S}$ enters a completion point, it makes an angle equal to the turn times $c(x)$, which is therefore at least $\pi$. Note that for a metric in $Flat(S,q)$, every cone angle $c(x)$ is at least $\frac{2\pi}{q}$, therefore $\frac{q}{2}$ turns around $x$ with $q$ even (and $\frac{q+1}{2}$ turns when $q$ is odd), is a sufficient condition  for any curve to have a lift whose geodesic representative in $\breve{S}$ goes through a completion point in the preimage of $x$. With this understanding of geodesics, we are ready to explain the extension of the results for punctured surfaces. \\

In case $q=1$, all cone angles are greater than or equal to $2\pi$. A Euclidean cone metric with all cone angles $\geq2\pi$ on a sphere contradicts the Gauss-Bonnet formula (see Proposition~\ref{prop:Gauss}).  Therefore, there is no $Flat(S,1)$ metric on a punctured sphere. However, Theorem~\ref{thm:12} is true for punctured surfaces of genus $g\geq1$:

\begin{thm}   Let S be a surface with genus $g\geq1$ and $n>0$ punctures. For every $\gamma,\gamma'\in\mathcal{C}(S)$, $\gamma \equiv_{1} \gamma' \, \, \Leftrightarrow \, \, \gamma\equiv_{2}\gamma'$.  \end{thm}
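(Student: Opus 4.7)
The plan is to mirror the closed-surface proof of Theorem~\ref{thm:12}, adjusting the key family of metrics to the punctured setting. The implication $\gamma \equiv_{si} \gamma' \Rightarrow \gamma \equiv_2 \gamma' \Rightarrow \gamma \equiv_1 \gamma'$ is inherited from the punctured analog of Theorem~\ref{thm:clein} together with $Flat(S,1) \subseteq Flat(S,2)$, so all the work lies in showing $\gamma \equiv_1 \gamma' \Rightarrow \gamma \equiv_{si} \gamma'$. As in the closed case, I would first reduce this to the claim that $i(\alpha,\gamma) = i(\alpha,\gamma')$ for every non-separating, non-peripheral simple closed curve $\alpha$; the punctured-surface version of Thurston's approximation (Theorem~\ref{thm:Thurston}) would then upgrade this to all $\alpha \in \mathcal{S}(S)$.

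The heart of the proof is producing, for each such $\alpha$, a metric $m_\alpha^\epsilon \in Flat(S,1)$ in which $\alpha$ is the core of a Euclidean cylinder of width $\epsilon$ and circumference $1$. Because $g \geq 1$, the rectangle-gluing construction of the closed case (Figure~\ref{fig:pravougaonik5}, or, for $g=1$, a single rectangle with each pair of opposite sides identified) yields a closed surface $\widehat{S}$ carrying a trivial-holonomy Euclidean cone metric $\widehat{m}_\alpha^\epsilon$ in which $\alpha$ is the core of the desired cylinder. I would then choose $n$ distinct smooth points $p_1,\ldots,p_n$ in this metric, placed in the interior of the rectangle and away from any horizontal or vertical strips needed in the length estimates, and puncture $\widehat{S}$ at these points to obtain $S$ with the restricted metric $m_\alpha^\epsilon$. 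Since each $p_i$ is a smooth point (cone angle $2\pi$) and the holonomy is trivial, $m_\alpha^\epsilon \in Flat(S,1)$.

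With $m_\alpha^\epsilon$ in hand, the two length bounds of the closed-surface proof carry over. The lower bound $l_{m_\alpha^\epsilon}(\gamma) \geq i(\alpha,\gamma)$ follows by the same cylinder-crossing reasoning, now applied to the axis of $h_\gamma$ in $\breve S$ and its projection to $S$: each traversal of the cylinder contributes both $1$ to $i(\alpha,\gamma)$ and at least $1$ to length. The upper bound $l_{m_\alpha^\epsilon}(\gamma) \leq i(\alpha,\gamma) + n_\gamma\,\epsilon$ follows by constructing the piecewise horizontal and vertical representative $\bar\gamma$ exactly as in the closed case, with the $p_i$ chosen in advance to be disjoint from $\bar\gamma$. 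Letting $\epsilon \to 0$ and applying $\gamma \equiv_1 \gamma'$ forces $i(\alpha,\gamma) = i(\alpha,\gamma')$, completing the reduction.

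The main obstacle is ensuring that $\bar\gamma$, built from the closed-surface geometry, represents the homotopy class of $\gamma$ in the punctured surface $S$ and not merely the corresponding class in $\widehat{S}$; a priori the straight-line homotopy from $\gamma$ to $\bar\gamma$ in $\widehat{S}$ could sweep across one of the $p_i$. This is the reason for the flexibility in placing the punctures: with $\gamma$ and $\gamma'$ fixed in advance, only finitely many representatives and homotopy tracks need to be avoided, so the $p_i$ can be slotted into small open regions missing all of them. Verifying this carefully --- together with the parallel treatment needed for the lower bound, which must pass through the metric completion $\breve S$ rather than a smooth closed geodesic on $\widehat{S}$ --- is the principal new ingredient beyond the closed-surface argument.
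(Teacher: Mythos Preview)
Your overall strategy matches the paper's: reduce to $\equiv_1 \Rightarrow \equiv_{si}$ via Thurston's approximation, then for each non-separating $\alpha$ build a trivial-holonomy metric from the rectangle $Y_g^\epsilon$ and squeeze $i(\alpha,\gamma)$ between the two length bounds. The difference is where the punctures go. The paper does \emph{not} put them at interior smooth points; it chooses the homeomorphism $f_\epsilon:\widehat S\to X_g^\epsilon$ so that the punctures land on the image of the vertical $\epsilon$-edges of $Y_g^\epsilon$ (for $g=1$ the rectangle has opposite sides identified). With that placement the open cylinder $X_g^\epsilon\setminus G$ lies entirely in $S$, so horizontal segments never see the punctures, and whatever extra winding the $S$-homotopy class of $\gamma$ carries around the punctures is absorbed into the vertical part of $\bar\gamma$, which lives in a set of total length $O(\epsilon)$. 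One then builds $\bar\gamma$ directly as a representative of $\gamma$ in $S$, with no need to compare homotopy classes in $S$ versus $\widehat S$.

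Your proposed resolution of the obstacle --- place the $p_i$ in small regions missing the homotopy tracks --- has a circularity problem. The puncture positions, the image $h(\gamma)$, the closed-surface $\bar\gamma$, and the homotopy between them are all determined by the \emph{same} choice of homeomorphism $h:\widehat S\to X_g^\epsilon$; you cannot first lay down $\bar\gamma$ and a straight-line homotopy and afterwards slot the punctures into the complement. Moreover, two curves homotopic in $\widehat S$ need not be homotopic in $\widehat S$ minus a point, so even granting free choice of the $p_i$ there is no guarantee that \emph{some} homotopy from $h(\gamma)$ to the closed-surface $\bar\gamma$ misses them. Your interior placement can in fact be salvaged --- since every vertical distance in $X_g^\epsilon$ is $O(\epsilon)$, a detour from a horizontal segment to an interior puncture and back costs only $O(\epsilon)$ --- but that requires constructing $\bar\gamma$ in $S$ from the outset rather than importing it from $\widehat S$, which is not the argument you wrote. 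The paper's choice of puncture location makes this issue disappear with no extra work.
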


\begin{proof}[Proof:]
The implication $\gamma \equiv_{si} \gamma' \, \, \Rightarrow \, \, \gamma\equiv_{2}\gamma'$ is also true for punctured surfaces, and follows, for example, from  Lemma~9 in \cite{clein2}. Theorem~\ref{thm:Thurston} and proof are equally valid for any punctured surface containing a nonseparating curve (so when the genus is at least $1$). To prove $\gamma \equiv_{1} \gamma' \, \, \Rightarrow \, \, \gamma\equiv_{si}\gamma'$ we follow the proof of Theorem~\ref{thm:12}. In this case we choose a homeomorphism $f_\epsilon :\widehat{S} \to X_g^\epsilon$, where $X_g^\epsilon$ is obtained by gluing the rectangle $Y_g^\epsilon$, so that the points $f(\widehat{S}\setminus S)$ belong to the image on  $X_g^\epsilon$ of the vertical $\epsilon$-length edges on $Y_g^\epsilon$. In case of a punctured torus, $Y_1^\epsilon$ is a rectangle, as $X_1^\epsilon$ is obtained from  $Y_1^\epsilon$ by gluing opposite sides together. 
\end{proof}

\vspace{0.2cm}

The equivalence relation $\equiv_{\infty}$ is trivial on punctured surfaces as well. Most of the proof of Theorem~\ref{thm:R} goes through, but we require a few modifications. Start with a complete, finite area hyperbolic metric on $S$ with $n$ cusps that correspond to $n$ punctures. Any two distinct homotopy classes of closed curves have distinct closed geodesic representatives in this metric and we denote these $\gamma$ and $\gamma'$. We deform a hyperbolic metric to a hyperbolic cone metric replacing cusps with hyperbolic cones (see \cite{judge}, for example), so that $\gamma$  and $\gamma'$ do not intersect the cone points. The rest of the proof is the same and we construct a Euclidean cone metric in which geodesic representatives of  $\gamma$ and $\gamma'$ on $\widehat{S}$ have different lengths and do not contain cone points coming from punctures. The proof of Theorem~\ref{thm:polygon} and Theorem~\ref{thm:polygon2} is the same for punctured surfaces when we regard the punctures as cone points. Thus we have:

\begin{thm} For every finite area, punctured surface $S$, the equivalence relation $\equiv_{\infty}$ on $\mathcal{C}(S)$ is trivial. \end{thm}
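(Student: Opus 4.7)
The plan is to mirror the closed-surface proof of Theorem~\ref{thm:infinity}: reduce to a punctured analog of Theorem~\ref{thm:R}, and then invoke the approximation results Theorems~\ref{thm:polygon} and~\ref{thm:polygon2}, both of which (as the excerpt points out) transfer verbatim to the punctured setting once we treat each puncture as a cone point. Concretely, it suffices to show that for any distinct $\gamma,\gamma'\in\mathcal{C}(S)$ there exists $m\in Flat(S)$ with $l_m(\gamma)\neq l_m(\gamma')$; bilipschitz approximation by $m_n\in Flat(S,q_n)$ with $K_n\to 1$ then forces $l_{m_n}(\gamma)\neq l_{m_n}(\gamma')$ for all sufficiently large $n$, contradicting $\gamma\equiv_\infty\gamma'$.

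To produce such an $m$, I would start with a complete, finite-area hyperbolic metric $g$ on $S$. If the $g$-lengths of $\gamma$ and $\gamma'$ happen to agree, perturb $g$ by a bump function supported in a small disk around a point of $\gamma$ avoiding $\gamma'$, exactly as in the proof of Theorem~\ref{thm:R}, to obtain a negatively curved Riemannian metric $g'$ with $l_{g'}(\gamma)\neq l_{g'}(\gamma')$. Next, using the hyperbolic cone deformations of \cite{judge}, replace a neighborhood of each cusp by a hyperbolic cone, choosing the deformation small enough that its support is disjoint from $\gamma$ and $\gamma'$; the outcome is a locally CAT(0) hyperbolic cone metric on $\widehat{S}$ in which $\gamma,\gamma'$ remain closed geodesics of distinct lengths and avoid all cone points at the punctures. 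Now choose a geodesic triangulation of $\widehat{S}$ whose vertex set contains both the punctures and the cone points, with each triangle lying in a CAT(0) neighborhood and with $\gamma,\gamma'$ realized as unions of edges. Replace each hyperbolic triangle by a Euclidean triangle of matching side lengths to obtain a Euclidean cone metric $m$: CAT(0) comparison guarantees that each Euclidean angle dominates the corresponding hyperbolic angle, so cone angles at interior vertices remain $\geq 2\pi$, and $\gamma,\gamma'$ still satisfy the angle $\geq\pi$ condition on each side at every cone point they traverse, keeping them as geodesics of the same length. The cone angles at the punctures may fall below $2\pi$, but the definition of $Flat(S)$ in Section~\ref{sec:punctures} only requires the NPC condition at cone points lying in $S$, so $m\in Flat(S)$.

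Finally, applying Theorem~\ref{thm:polygon2} (valid on punctured surfaces by the same remark) to any prescribed infinite sequence of distinct positive integers yields $m_n\in Flat(S,q_n)$ such that $\mathrm{id}\colon(S,m_n)\to(S,m)$ is $K_n$-bilipschitz with $K_n\to 1$. Bilipschitz maps control length, so $l_{m_n}(\gamma)\to l_m(\gamma)$ and $l_{m_n}(\gamma')\to l_m(\gamma')$, and since $l_m(\gamma)\neq l_m(\gamma')$ we get $l_{m_n}(\gamma)\neq l_{m_n}(\gamma')$ for all large $n$, contradicting $\gamma\equiv_\infty\gamma'$. The main obstacle I expect is technical rather than conceptual: arranging the hyperbolic-cone deformation and triangulation simultaneously so that $\gamma,\gamma'$ remain disjoint from the new cone points, and confirming that the Euclidean-triangle replacement genuinely lands in $Flat(S)$ under the weaker cone-angle requirement at punctures. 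Both are essentially bookkeeping with the definitions in Section~\ref{sec:punctures} and use nothing beyond the ideas already present in the proofs of Theorems~\ref{thm:R} and~\ref{thm:polygon}.
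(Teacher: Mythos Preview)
Your proposal is correct and follows essentially the same route as the paper's own argument: deform the cusped hyperbolic metric to a hyperbolic cone metric via \cite{judge} keeping $\gamma,\gamma'$ away from the new cone points, perturb by a bump function if the lengths happen to coincide, triangulate and replace by Euclidean triangles to land in $Flat(S)$, and then invoke the punctured versions of Theorems~\ref{thm:polygon}/\ref{thm:polygon2} to approximate. The only cosmetic difference is the order---the paper replaces cusps by cones first and then carries out the remainder of the proof of Theorem~\ref{thm:R}, whereas you perturb first and cone-deform second---but since the bump function and the cone deformation have disjoint supports, the two orderings are interchangeable.
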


Regarding the equivalence relation $\equiv_q,\, q\in\mathbb{Z}_+$, a stronger statement than Theorem~\ref{thm:thmq} is true for punctured surfaces:

\begin{thm} For every finite area, punctured surface $S$ and for every $q_0\in\mathbb{Z}_+$ there are infinitely many distinct homotopy classes of curves $\gamma_i\in \mathcal{C}(S)$, $i=1,2\ldots$, such that $\gamma_i\equiv_q\gamma_j$, for all $i,j$ and for all $q\leq q_0$. \end{thm}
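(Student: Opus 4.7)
The approach is to adapt the construction of Theorem~\ref{thm:thmq} to the punctured setting, exploiting the following key feature unique to punctures: the $\widehat m$-geodesic representative of a curve can make arbitrarily many additional turns around a completion point $p\in\widehat S\setminus S$ without affecting its length (each turn occurs at a single point of length zero), while changing the turn count changes the free homotopy class in $\mathcal{C}(S)$. This mechanism converts a single curve whose $\widehat m$-geodesic passes through a puncture into an infinite family of pairwise $\equiv_q$--equivalent curves.

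\textbf{Step 1.} First adapt Lemma~\ref{lem:lem1} to produce a curve $\gamma\in\mathcal{C}(S)$ whose $\widehat m$-geodesic representative passes through a fixed puncture $p\in\widehat S\setminus S$ for every $m\in Flat(S,q)$ with $q\leq q_0$. The linking-lifts argument of Lemma~\ref{lem:lem1} carries through verbatim to show that a suitable $\gamma$ has its $\widehat m$-geodesic passing through some cone point of $\widehat m$. To guarantee that this cone point is the specific puncture $p$ (rather than a cone point in $S$, where loops are null-homotopic in $\pi_1(S)$ and provide no new homotopy classes), replace $\gamma$ by $\gamma\cdot\pi_p^N$ for $N$ sufficiently large, where $\pi_p\in\pi_1(S)$ is a loop around $p$. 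Intuitively, as $N$ grows the axis of the corresponding element in the metric completion of $\widetilde S$ is forced to pass through a lift of $p$, analogously to how the axis of $h P^N$ in hyperbolic geometry tends to the fixed point of the parabolic $P$.

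\textbf{Step 2.} Define the infinite family $\gamma_n = \gamma\cdot\pi_p^n\in\mathcal{C}(S)$ for $n=0,1,2,\ldots$. These represent pairwise distinct free homotopy classes: the turn number at $p$ is a conjugacy-class invariant of the geodesic representative, and the turn of $\gamma_n$ exceeds that of $\gamma$ by $n$ (after choosing $\pi_p$ based at a point on the geodesic of $\gamma$). Now for each $m\in Flat(S,q)$ with $q\leq q_0$, the $\widehat m$-geodesic of $\gamma$ passes through $p$ with turn $k_m$ satisfying $k_m\cdot c_m(p)\geq\pi$, where $c_m(p)$ is the cone angle at $p$. The $\widehat m$-geodesic of $\gamma_n$ is obtained by passing through $p$ with turn $k_m+n$, which still satisfies the angle condition. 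Since the turn at $p$ happens at a single point of length zero, it contributes nothing to the total length, so $l_m(\gamma_n)=l_m(\gamma)$ for all $n$. Hence $\gamma_i\equiv_q\gamma_j$ for all $i,j$ and all $q\leq q_0$.

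\textbf{Main obstacle.} The crucial difficulty is Step 1: ensuring uniformly over the noncompact family $\bigcup_{q\leq q_0}Flat(S,q)$ that the $\widehat m$-geodesic of $\gamma$ passes through the single chosen puncture $p$. The direct application of Lemma~\ref{lem:lem1} yields only \emph{some} cone point per metric, which may vary with $m$ and may lie in the interior of $S$. The uniformity is what requires the winding modification $\gamma\mapsto\gamma\cdot\pi_p^N$, but justifying quantitatively that a single $N$ suffices for every $m$ in a noncompact family of flat metrics is the technical heart of the proof; it uses the linking-lifts combinatorics from Lemma~\ref{lem:lem1} together with the fact that the element $\pi_p$ acts on the completion $\breve S$ as an isometry fixing the lift $\tilde p$, so that composing with a large power of $\pi_p$ pushes the axis onto $\tilde p$ uniformly.
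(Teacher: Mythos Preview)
Your Step 2 captures exactly the mechanism the paper uses: once the $\breve S$--geodesic is forced through a completion point over a puncture, additional turns there cost zero length while changing the homotopy class. The problem is entirely in Step 1, which you correctly flag as the obstacle but do not actually resolve---and your proposed route through Lemma~\ref{lem:lem1} is an unnecessary detour.

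The paper avoids Lemma~\ref{lem:lem1} altogether in the punctured case. The observation you are missing is this: for $m\in Flat(S,q)$ with $q\le q_0$, the cone angle at any puncture $x$ satisfies $c_m(x)\ge 2\pi/q\ge 2\pi/q_0$. Hence if a curve makes $\left[\tfrac{q_0+1}{2}\right]$ turns around $x$, the angle subtended at the corresponding completion point in $\breve S$ is at least $\pi$, and the geodesic is \emph{forced} through that completion point---uniformly in $m$, with no further argument. So rather than starting from a complicated $\gamma$ and winding by $\pi_p^N$, hoping some unspecified $N$ works uniformly over the noncompact family of metrics, the paper simply picks an arc $\delta$ from a small circle $\sigma$ about a puncture $x$ back to itself (with lift joining two distinct completion points over $x$), maps a barbell graph $\Gamma$ into $S$ sending the bar to $\delta$ and each loop to $\sigma$ traversed $\left[\tfrac{q_0+1}{2}\right]$ times, and takes the words $w_n=a^nb$. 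Every $F_*(w_n)$ then has $m$--length exactly $2\,l_m(\delta)$ for every $m\in Flat(S,q)$, $q\le q_0$. Your ``main obstacle'' dissolves once you use this explicit lower bound on puncture cone angles; the linking-lifts machinery of Lemma~\ref{lem:lem1} was needed in the closed case precisely because interior cone angles (all $\ge 2\pi$) give no uniform bound on the number of turns required.

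A smaller issue: your distinctness argument via turn number is metric-dependent and needs care (the geodesic may visit several completion points, and the turn count at each depends on $m$). The paper instead fixes a complete hyperbolic metric $m'$ on $S$ and observes that $l_{m'}(F_*(w_n))\to\infty$ as $n\to\infty$, since the curves wind more and more around the cusp; hence an infinite subsequence consists of pairwise non-homotopic curves.
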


\begin{proof}[Proof:] Similarly to the proof of Theorem~\ref{thm:thmq} we will construct a  rank $2$ free subgroup of $\pi_1(S)$ and find infinitely many words in this free subgroups corresponding to curves on S that all have the same length for any metric $m \in Flat(S,q)$.  \\

We start by picking a reference metric $m_0 \in Flat(S,q_0)$ and an arc $\breve{\delta}$ between two distinct completion points in $\breve{S}$ which have endpoints projecting to a single point $x$ in $S$.   Let $\sigma$ be the boundary of the $\epsilon$--ball $B_{\epsilon}(x)$ for some small $\epsilon > 0$ and let $\delta$ be the restriction of $p(\breve{\delta})$ to the subpath from $\sigma$ to itself.  Let $\Gamma$ be a graph consisting of two circles joined by an edge (see Figure~\ref{fig:puncturedgraph}).   Define a map $F \colon \Gamma \to S$ by sending the middle arc to $\delta$ and the circles to loops that traverse $\sigma$ $\left[ \frac{q_0+1}{2} \right]$ times.  Here $[k]$ is the integer part of $k$.\\

Picking a basepoint on the middle edge of $\Gamma$, we let $a$ and $b$ be generators for $\pi_1(\Gamma)$ represented by loops that run from the basepoint, out to one or the other of the circles, and traverse the circle once before returning to the basepoint.  Let $w_n = a^nb$ for $n \in \mathbb{Z}$.\\

For any metric $m \in Flat(S,q)$, let $\breve{\delta}_m$ be the straightening of $\breve{\delta}$ to a $m$--geodesic between the endpoints in $\breve{S}$, and let $l_m(\delta)$ be the length of $\breve{\delta}_m$.  Note that the geodesic representative of the image of $w_n$ simply traverses the image in $\widehat{S}$ of $\breve{\delta}_m$ once in both directions.  This is because the corresponding approximates in $S$ make $\left[ \frac{q_0 + 1}{2} \right] > \left[ \frac{q + 1}{2} \right]$ turns around $x$.  Thus, for every $m \in Flat(S,q)$, $l_m(F_*(w_n)) = 2 l_m(\delta)$ for every $n \in \mathbb{Z}$. See Figure~\ref{fig:puncturedgraph} for lifts of $F_*(w_n)$ to $\breve{S}$ for 3 different $n$. \\

\begin{figure}[htb]
\centering
\includegraphics[height=3.5in]{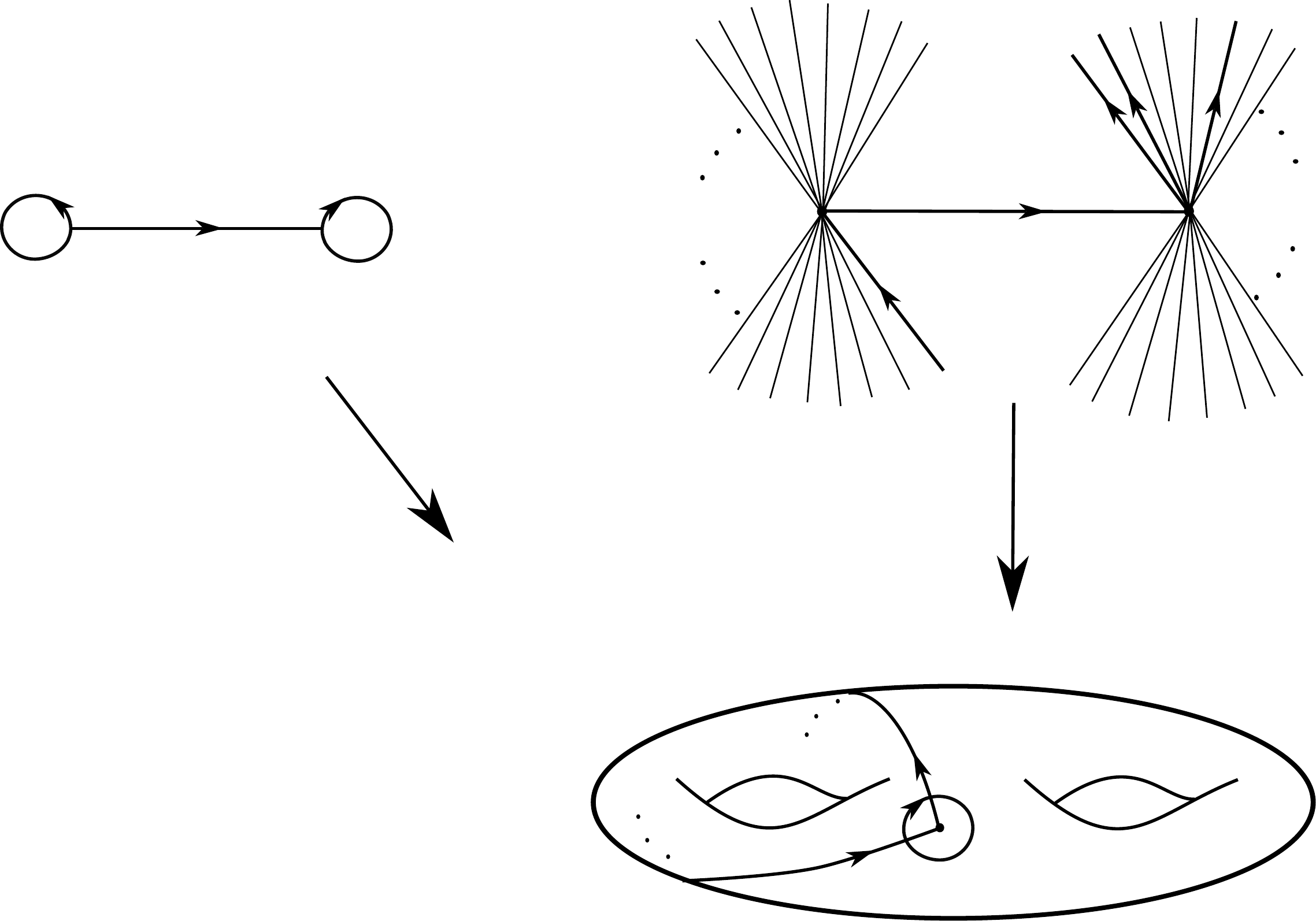}
\caption{Mapping the graph $\Gamma$ to S and some lifts of different $w_n$ to $\breve{S}$.}
\label{fig:puncturedgraph}
\begin{picture}(0,0) (0,0)
\put(-175,205){$a$}
\put(-83,205){$b$}
\put(97,238){$\breve{\delta}$}
\put(77,57){$x$}
\put(-130,245){$\Gamma$}
\put(180,273){$\breve{S}$}
\put(180,47){$\widehat{S}$}
\put(-88,154){$F$}
\put(103,150){$p$}
\end{picture}
\end{figure}

The only thing left to prove now is that there is an infinite subsequence of $w_{n_k}$ of $w_n$, such that each two $F_*(w_{n_k})$ represent different homotopy classes of curves in $\mathcal{C}(S)$. Let $m'$ be an arbitrary hyperbolic metric on $S$. The hyperbolic metric is complete, and thus  every closed curve has a geodesic representative. As $n \to \infty$ the lengths $l_{m'}(F_*(w_{n_k}))$ tend to infinity since they wrap more and more around the cusps. Therefore we can find a subsequence of geodesics whose $m'$-lengths are all different and thus cannot be homotopic. This completes our proof. \end{proof}

\bibliographystyle{amsplain}
\bibliography{paper}

\providecommand{\bysame}{\leavevmode\hbox to3em{\hrulefill}\thinspace}
\providecommand{\MR}{\relax\ifhmode\unskip\space\fi MR }
\providecommand{\MRhref}[2]{%
  \href{http://www.ams.org/mathscinet-getitem?mr=#1}{#2}
}
\providecommand{\href}[2]{#2}
\begin{thebibliography}{10}

\bibitem{abraham}
R.~Abraham, \emph{Bumpy metrics}, Global {A}nalysis ({P}roc. {S}ympos. {P}ure
  {M}ath., {V}ol. {XIV}, {B}erkeley, {C}alif., 1968), Amer. Math. Soc.,
  Providence, R.I., 1970, pp.~1--3. \MR{0271994 (42 \#6875)}

\bibitem{anderson}
James~W. Anderson, \emph{Variations on a theme of {H}orowitz}, Kleinian groups
  and hyperbolic 3-manifolds ({W}arwick, 2001), London Math. Soc. Lecture Note
  Ser., vol. 299, Cambridge Univ. Press, Cambridge, 2003, pp.~307--341.
  \MR{2044556 (2005h:57002)}

\bibitem{anosov}
D.~V. Anosov, \emph{Generic properties of closed geodesics}, Izv. Akad. Nauk
  SSSR Ser. Mat. \textbf{46} (1982), no.~4, 675--709, 896. \MR{670163
  (84b:58029)}

\bibitem{bonahon}
Francis Bonahon, \emph{The geometry of {T}eichm\"uller space via geodesic
  currents}, Invent. Math. \textbf{92} (1988), no.~1, 139--162. \MR{931208
  (90a:32025)}

\bibitem{bridson}
Martin~R. Bridson and Andr{\'e} Haefliger, \emph{Metric spaces of non-positive
  curvature}, Grundlehren der Mathematischen Wissenschaften [Fundamental
  Principles of Mathematical Sciences], vol. 319, Springer-Verlag, Berlin,
  1999. \MR{1744486 (2000k:53038)}

\bibitem{clein2}
Moon Duchin, Christopher~J. Leininger, and Kasra Rafi, \emph{Length spectra and
  degeneration of flat metrics}, Invent. Math. \textbf{182} (2010), no.~2,
  231--277. \MR{2729268 (2011m:57022)}

\bibitem{farkas}
H.~M. Farkas and I.~Kra, \emph{Riemann surfaces}, second ed., Graduate Texts in
  Mathematics, vol.~71, Springer-Verlag, New York, 1992. \MR{1139765
  (93a:30047)}

\bibitem{FLP}
A.~Fathi, F.~Laudenbach, and V.~Po\'enaru, \emph{Travaux de {T}hurston sur les
  surfaces}, Soci\'et\'e Math\'ematique de France, Paris, 1991, S\'eminaire
  Orsay, Reprint of {\it Travaux de Thurston sur les surfaces}, Soc.\ Math.\
  France, Paris, 1979 Ast\'erisque No. 66-67 (1991). \MR{MR1134426 (92g:57001)}

\bibitem{hatcher}
Allen Hatcher, \emph{Algebraic topology}, Cambridge University Press,
  Cambridge, 2002. \MR{1867354 (2002k:55001)}

\bibitem{hopf}
Eberhard Hopf, \emph{Statistik der geod\"atischen {L}inien in
  {M}annigfaltigkeiten negativer {K}r\"ummung}, Ber. Verh. S\"achs. Akad. Wiss.
  Leipzig \textbf{91} (1939), 261--304. \MR{0001464 (1,243a)}

\bibitem{horowitz}
Robert~D. Horowitz, \emph{Characters of free groups represented in the
  two-dimensional special linear group}, Comm. Pure Appl. Math. \textbf{25}
  (1972), 635--649. \MR{0314993 (47 \#3542)}

\bibitem{judge}
Christopher~M. Judge, \emph{Conformally converting cusps to cones}, Conform.
  Geom. Dyn. \textbf{2} (1998), 107--113 (electronic). \MR{1657563 (99m:53026)}

\bibitem{kapovich}
Ilya Kapovich, Gilbert Levitt, Paul Schupp, and Vladimir Shpilrain,
  \emph{Translation equivalence in free groups}, Trans. Amer. Math. Soc.
  \textbf{359} (2007), no.~4, 1527--1546 (electronic). \MR{2272138
  (2008d:20045)}

\bibitem{clein1}
Christopher~J. Leininger, \emph{Equivalent curves in surfaces}, Geom. Dedicata
  \textbf{102} (2003), 151--177. \MR{2026843 (2004j:57022)}

\bibitem{masters}
Joseph~D. Masters, \emph{Length multiplicities of hyperbolic 3-manifolds},
  Israel J. Math. \textbf{119} (2000), 9--28. \MR{1802647 (2001k:57019)}

\bibitem{masur}
Howard Masur and John Smillie, \emph{Hausdorff dimension of sets of nonergodic
  measured foliations}, Ann. of Math. (2) \textbf{134} (1991), no.~3, 455--543.
  \MR{1135877 (92j:58081)}

\bibitem{minsky}
Yair~N. Minsky, \emph{Harmonic maps, length, and energy in {T}eichm\"uller
  space}, J. Differential Geom. \textbf{35} (1992), no.~1, 151--217.
  \MR{1152229 (93e:58041)}

\bibitem{neumann}
W.~D. Neumann, \emph{Notes on geometry and 3-manifolds}, Low dimensional
  topology ({E}ger, 1996/{B}udapest, 1998), Bolyai Soc. Math. Stud., vol.~8,
  J\'anos Bolyai Math. Soc., Budapest, 1999, With appendices by Paul Norbury,
  pp.~191--267. \MR{1747270 (2001a:57032)}

\bibitem{penner}
R.~C. Penner and J.~L. Harer, \emph{Combinatorics of train tracks}, Annals of
  Mathematics Studies, vol. 125, Princeton University Press, Princeton, NJ,
  1992. \MR{1144770 (94b:57018)}

\bibitem{rafi}
Kasra Rafi, \emph{A characterization of short curves of a {T}eichm\"uller
  geodesic}, Geom. Topol. \textbf{9} (2005), 179--202. \MR{2115672
  (2005i:30072)}

\bibitem{randol}
Burton Randol, \emph{The length spectrum of a {R}iemann surface is always of
  unbounded multiplicity}, Proc. Amer. Math. Soc. \textbf{78} (1980), no.~3,
  455--456. \MR{553396 (80k:58100)}

\bibitem{strebel}
Kurt Strebel, \emph{Quadratic differentials}, Ergebnisse der Mathematik und
  ihrer Grenzgebiete (3) [Results in Mathematics and Related Areas (3)],
  vol.~5, Springer-Verlag, Berlin, 1984. \MR{743423 (86a:30072)}

\end{thebibliography}

\end{document}